\newcommand{\numberset}{\mathbb}
\newcommand{\N}{\numberset{N}}
\newcommand{\R}{\numberset{R}}
\newcommand{\B}{\numberset{B}}
\newcommand{\Pk}{\numberset{P}}
\newcommand{\OO}{\mathcal{O}}
\renewcommand{\epsilon}{\varepsilon}
\renewcommand{\theta}{\vartheta}
\renewcommand{\rho}{\varrho}
\renewcommand{\phi}{\varphi}
\newcommand{\pp}{\boldsymbol{p}}
\newcommand{\xx}{\boldsymbol{x}}
\newcommand{\uu}{\boldsymbol{u}}
\newcommand{\vv}{\boldsymbol{v}}
\newcommand{\zz}{\boldsymbol{z}}
\newcommand{\ww}{\boldsymbol{w}}
\newcommand{\ff}{\boldsymbol{f}}
\newcommand{\nn}{\boldsymbol{n}}
\newcommand{\tf}{\boldsymbol{t}}
\newcommand{\ffi}{\boldsymbol{\phi}}
\newcommand{\ppsi}{\boldsymbol{\psi}}
\newcommand{\ssigma}{\boldsymbol{\sigma}}
\newcommand{\Sg}{\boldsymbol{\Sigma}}
\newcommand{\dd}{{\rm div}}
\newcommand{\gr}{\nabla}
\newcommand{\Gr}{\boldsymbol{\nabla}}
\newcommand{\rr}{{\rm rot}}
\newcommand{\RR}{\boldsymbol{{\rm rot}}}
\newcommand{\CC}{\boldsymbol{{\rm curl}}}
\newcommand{\dl}{\boldsymbol{\Delta}}
\newcommand{\VV}{\boldsymbol{V}}
\newcommand{\ZZ}{\boldsymbol{Z}}
\def\P0{{\Pi^{0, E}_k}}
\def\PP0{{\boldsymbol{\Pi}^{0, E}_{k-1}}}
\newcommand{\epseps}{\boldsymbol{\epsilon}}
\lbrace\begin{array}{@{}l@{}}}%
\theoremstyle{definition}
\theoremstyle{remark}
\newtheorem{remark}{Remark}[section]
\theoremstyle{remark}
\theoremstyle{plain}
\newtheorem{theorem}{Theorem}[section]
\newtheorem{proposition}{Proposition}[section]
\newtheorem{lemma}{Lemma}[section]
\author[1]{L. Beir\~ao da Veiga \thanks{lourenco.beirao@unimib.it}}
\author[1]{F. Dassi \thanks{franco.dassi@unimib.it}}
\author[1]{G. Vacca \thanks{giuseppe.vacca@unimib.it}}
\affil[1]{Dipartimento di Matematica e Applicazioni,
Universit\`a degli Studi di Milano Bicocca,
Via Roberto Cozzi 55 - 20125 Milano, Italy}
\title{ \textbf{The Stokes complex for Virtual
Elements in three dimensions}}
\date{\today}
\begin{document}

\maketitle

\begin{abstract}
The present paper has two objectives. On one side, we develop and test numerically divergence free Virtual Elements in three dimensions, for variable ``polynomial'' order. These are the natural extension of the two-dimensional divergence free VEM elements, with some modification that allows for a better computational efficiency. We test the element's performance both for the Stokes and (diffusion dominated) Navier-Stokes equation. 
The second, and perhaps main, motivation is to show that our scheme, also in three dimensions, enjoys an underlying discrete Stokes complex structure.
We build a pair of virtual discrete spaces based on general polytopal partitions, the first one being scalar and the second one being vector valued, such that when coupled with our velocity and pressure spaces, yield a discrete Stokes complex.
\end{abstract}

\section{Introduction}

\bigskip

The Virtual Element Method (VEM) was introduced in \cite{volley,autostoppisti} as a generalization of the Finite Element Method (FEM) allowing for general polytopal meshes. Nowadays the VEM technology has reached a good level of success; among the many papers we here limit ourselves in citing a few sample works 
\cite{
GPDM:2015,
BPP:2017,
BB:2017,
dassi-mascotto:2018,
BBDMRbis:2018,
MPP:2018,
AMV:2018,
AHHW:2018}. 
It was soon recognized that the flexibility of VEM allows to build elements that hold peculiar advantages also on more standard grids. One main example is that of ``divergence-free'' Virtual Elements for Stokes-type problems, initiated in \cite{BLV:2017, ABMV:2014} and further developed in \cite{BLV:2018,vacca:2018}. 
An advantage of the proposed family of Virtual Elements is that, without the need of a high minimal polynomial degree as it happens in conforming FEM, it is able to yield a discrete divergence-free (conforming) velocity solution, which can be an interesting asset as explored for Finite Elements in \cite{guzman-neilan:2014,john-linke-merdon-neilan-rebholz:2017,neilan-sap:2016,linke-merdon:2016,guzaman-neilan:2018}. 
For a wider look in the literature, other VEM for Stokes-type problems can be found in \cite{LLC:2017,CGM:2016,CGS:2018,GMS:2018,CW:2018,dassi-vacca:2018} while different polygonal methods for the same problem in \cite{LVY:2014, CFQ:2017, BdPD:2018, dPK:2018}.

The present paper has two objectives. On one side, we develop and test numerically for the first time the divergence free Virtual Elements in three dimensions (for variable ``polynomial'' order $k$). These are the natural extension of the two-dimensional VEM elements of \cite{BLV:2017,BLV:2018}, with some modification that allows for a better computational efficiency. We first test the element's performance for the Stokes and (diffusion dominated) Navier-Stokes equation for different kind of meshes (such as Voronoi, but also cubes and tetrahedra) and then  show a specific test that underlines the divergence free property (in the spirit of \cite{linke-merdon:2016, BLV:2018}). 

The second, and perhaps main, motivation is to show that our scheme, also in three dimensions, enjoys an underlying discrete Stokes complex structure. That is, a discrete structure of the kind
$$
\R \, \xrightarrow[]{ \,\quad \text{{$i$}} \quad \,   } \,
W_h \, \xrightarrow[]{   \quad \text{{$\gr$}} \quad  }\,
\Sg_h \, \xrightarrow[]{ \, \, \, \text{{$\CC$}} \, \, \, }\,
\VV_h \, \xrightarrow[]{  \, \, \, \, \text{{$\dd$}} \, \, \, \, }\,
Q_h \, \xrightarrow[]{ \quad 0 \quad }
0 
$$
where the image of each operator exactly corresponds to the kernel of the following one, thus mimicking the continuous complex
$$
\R \, \xrightarrow[]{ \,\quad \text{{$i$}} \quad \,   } \,
H^1(\Omega) \, \xrightarrow[]{   \quad \text{{$\gr$}} \quad  }\,
\boldsymbol{\Sigma}(\Omega) \, \xrightarrow[]{ \, \, \, \text{{$\CC$}} \, \, \, }\,
[H^1(\Omega)]^3 \, \xrightarrow[]{  \, \, \, \, \text{{$\dd$}} \, \, \, \, }\,
L^2(\Omega) \, \xrightarrow[]{ \quad 0 \quad }
0 \,,
$$
with $\boldsymbol{\Sigma}(\Omega)$ denoting functions of $L^2(\Omega)$ with ${\bf curl}$ in $H^1(\Omega)$ 
\cite{girault-raviart:book,amrouche-et-al:1998}.
Discrete Stokes complexes has been extensively studied in the literature of Finite Elements since the presence of an underlying complex implies a series of interesting advantages (such as the divergence free property), in addition to guaranteeing that the discrete scheme is able to correcly mimic the structure of the problem under study \cite{demkowicz-et-al:2000,demkowicz-buffa:2005,arnold-falk-winter:2006,arnold-falk-winther:2006bis,arnold-falk-winter:2010,buffa-rivas-sangalli-velasquez:2011,falk-neilan:2013,evans-hughes:2013,neilan:2015}. This motivation is therefore mainly theoretical in nature, but it serves the important purpose of giving a deeper foundation to our method. We therefore build a pair of virtual discrete spaces based on general polytopal partitions of $\Omega$, the first one $W_h$ which is conforming in $H^1(\Omega)$ and the second one $\Sg_h$ conforming in $\boldsymbol{\Sigma}(\Omega)$, such that, when coupled with our velocity and pressure spaces, yield a discrete Stokes complex. We also build a set of carefully chosen associated degrees of freedom. This construction was already developed in two dimensions in \cite{BMV:2018}, but here things are more involved due to the 
much more complex nature of the curl operator in 3D when compared to 2D. 
In this respect we must underline that, to the best of the authors knowledge, no Stokes exact complex of the type above exists for conforming Finite Elements in three dimensions. There exist FEM for different (more regular) Stokes complexes, but at the price of developing cumbersome elements with a  large minimal polynomial degree (we refer to \cite{john-linke-merdon-neilan-rebholz:2017} for an overview) or using a subdivision of the element \cite{christiansen:2018}. 
We finally note that our construction holds for a general ``polynomial'' order $k \ge 2$. 

The paper is organized as follows. After introducing some notation and preliminaries in Section \ref{sec:notations}, the Virtual Element spaces and the associated degrees of freedom are deployed in Section \ref{sec:spaces}. In Section \ref{sec:derham} we prove that the introduced spaces constitute an exact complex. In Section \ref{sec:VEM-ns} we describe the discrete problem, together with the associated projectors and bilinear forms. In Section \ref{sec:num_test} we provide the numerical tests. Finally, in the appendix we prove a useful lemma.

\section{Notations and preliminaries}
\label{sec:notations}

In the present section we introduce some basic tools and notations useful in the construction and theoretical analysis of Virtual Element Methods.

Throughout the paper, we will follow the usual notation for Sobolev spaces
and norms \cite{Adams:1975}.
Hence, for an open bounded domain $\omega$,
the norms in the spaces $W^s_p(\omega)$ and $L^p(\omega)$ are denoted by
$\|{\cdot}\|_{W^s_p(\omega)}$ and $\|{\cdot}\|_{L^p(\omega)}$ respectively.
Norm and seminorm in $H^{s}(\omega)$ are denoted respectively by
$\|{\cdot}\|_{s,\omega}$ and $|{\cdot}|_{s,\omega}$,
while $(\cdot,\cdot)_{\omega}$ and $\|\cdot\|_{\omega}$ denote the $L^2$-inner product and the $L^2$-norm (the subscript $\omega$ may be omitted when $\omega$ is the whole computational
domain $\Omega$).

\subsection{Basic notations and mesh assumptions}
\label{sub:mesh}
From now on, we will denote with $P$ a general  polyhedron 
having $\ell_V$ vertexes $V$, $\ell_e$ edges $e$ and $\ell_f$ faces $f$.
\\
For each polyhedron $P$, each face $f$ of $P$ and each edge $e$ of $f$ we denote with:
\begin{itemize}
\item[-] $\nn_P^f$ (resp. $\nn_P$) the unit outward normal vector to  $f$ (resp. to $\partial P$),
\item[-] $\nn_f^e$ (resp. $\nn_f$) the unit vector in the plane of $f$ that is normal to the edge $e$ (resp. to $\partial f$) and outward with respect to $f$,
\item[-] $\tf_f^e$ (resp. $\tf_f$) the unit vector in the plane of $f$ tangent to $e$ (resp.  to $\partial f$) counterclockwise with respect to $\nn_P^f$, 
\item[-] $\boldsymbol{\tau}_1^f$ and $\boldsymbol{\tau}_2^f$ two orthogonal unit vectors lying on $f$ and such that $\boldsymbol{\tau}_1^f \wedge \boldsymbol{\tau}_2^f = \nn_P^f$,
\item[-] $\tf_e$ a unit vector tangent to the edge $e$.  
\end{itemize}
Notice that the vectors $\tf_f^e$, $\tf_f$, $\boldsymbol{\tau}_1^f$ and $\boldsymbol{\tau}_2^f$ actually depend on $P$ (we do not write such dependence explicitly for lightening the notations).

In the following $\OO$ will denote a general geometrical  entity (element, face, edge) having diameter $h_{\OO}$.

Let  $\Omega$ be the computational domain that we assume to be a contractible polyhedron  (i.e. simply connected polyhedron with  boundary $\partial \Omega$ which consists of one connected component),  
with Lipschitz boundary.
Let $\set{\Omega_h}_h$ be a sequence of decompositions of $\Omega$ into general polyhedral elements $P$ 
where
$
h := \sup_{P \in \Omega_h} h_P
$.

We suppose that for all $h$, each element $P$ in $\Omega_h$ is a contractible polyhedron that fulfils the following assumptions:
\begin{description}
\item [$\mathbf{(A1)}$] $P$ is star-shaped with respect to a ball $B_P$ of radius $ \geq\, \rho \, h_P$, 
\item [$\mathbf{(A2)}$] every face $f$ of $P$ is star-shaped with respect to a disk $B_f$ of radius $ \geq\, \rho \, h_P$,
\item [$\mathbf{(A3)}$] every edge $e$ in $P$ satisfies 
$ h_e \geq \rho \, h_P$, 
\end{description}
where $\rho$ is a uniform positive constant. We remark that the hypotheses $\mathbf{(A1)}$, $\mathbf{(A2)}$ and $\mathbf{(A3)}$, though not too restrictive in many practical cases, 
can be further relaxed, as investigated in ~\cite{BLR:2017, brenner-guan-sung:2017, brenner-sung:2018, cao-chen:2018}. 

The total number of vertexes, edges, faces and elements in the decomposition $\Omega_h$ are denoted respectively with $L_V$, $L_e$, $L_f$, $L_P$. 

%
For any mesh object $\OO$ and for $n \in \N$  we introduce the spaces:
\begin{itemize}
\item $\Pk_n(\OO)$ the  polynomials on $\OO$ of degree $\leq n$  (with the extended notation $\Pk_{-1}(\OO)=\{0\}$),

\item $\widehat{\Pk}_{n \setminus m}(\OO) := 
\Pk_{n}(\OO) \setminus \Pk_{m}(\OO)$
for $m \leq n$, denotes the polynomials in $\Pk_{n}(\OO)$ with monomials of degree strictly greater than $m$.
\end{itemize}
Moreover for any mesh object $\OO$ of dimension $d$ we define
\begin{equation}
\label{eq:poly_dim}
\pi_{n, d} := \dim(\Pk_{n}(\OO)) = \dim(\Pk_{n}(\R^d)) \,,
\end{equation}
and thus $\dim(\widehat{\Pk}_{n \setminus m}(\OO)) = \pi_{n, d} -\pi_{m, d}$.

In the following the symbol $\lesssim$ will denote a bound up to a generic positive constant,
independent of the mesh size $h$,  but which may depend on $\Omega$,
on the ``polynomial'' order $k$  and
on the shape constant $\rho$ 
in assumptions $\mathbf{(A1)}$, $\mathbf{(A2)}$ and $\mathbf{(A3)}$.

%
%
%
%
%
%

\subsection{Vector calculus \& de Rham complexes}
\label{sub:vector_calculus}

Here below we fix some additional notation of the multivariable calculus.
\\
\textbf{Three dimensional operators.}
In three dimensions we denote with $\xx = (x_1, \, x_2, \, x_3)$ the independent variable. 
With a usual notation   
the symbols $\gr$ and $\Delta$ 
denote the gradient and Laplacian 
for scalar functions, while 
$\dl $, $\Gr$, $\epseps$,  $\dd$ and $\CC$ denote the vector Laplacian,  the gradient and the symmetric gradient operator, the divergence and the curl operator for vector fields.
Note that on each polyhedron $P$ the following useful polynomial decompositions hold
\begin{align}
\label{eq:poly_decomposition_3D_grad}
[\Pk_n(P)]^3 &= \nabla(\Pk_{n+1}(P))  \oplus (\xx \wedge [\Pk_{n-1}(P)]^3)\,,
\\
\label{eq:poly_decomposition_3D_curl}
[\Pk_n(P)]^3 &= \CC(\Pk_{n+1}(P))  \oplus \xx \,\Pk_{n-1}(P) \,.
\end{align}
\textbf{Tangential operators.} Let $f$ be a face of a polyhedron $P$, we denote with $\xx_f:= ({x_{f}}_1, \, {x_{f}}_2)$ the independent variable (i.e. a local coordinate system on $f$ associated with the axes
$\boldsymbol{\tau}_f^1$ and $\boldsymbol{\tau}_f^2$).
The tangential differential operators are denoted by a subscript $f$. Therefore 
the symbols $\gr_f$ and $\Delta_f$  denote the gradient and Laplacian  for scalar functions, while
$\dl_f$, $\Gr_f$, and $\dd_f$ denote the vector Laplacian,  the gradient operator
and the divergence  for vector fields
on $f$ (with respect to the coordinate $\xx_f$).
Furthermore for a scalar function $\phi$ and a vector field $\vv := (v_1, \, v_2)$ we set
\[
\RR_f \, \phi := \left( \frac{\partial \phi}{\partial {x_{f}}_2}, \, -\frac{\partial \phi}{\partial {x_{f}}_1} \right) 
\qquad \text{and} \qquad
\rr_f \, \vv := \frac{\partial v_2}{\partial {x_{f}}_1}  -  \frac{\partial v_1}{\partial {x_{f}}_2} \,.
\]
The following 2-d polynomial decompositions hold
\begin{align*}
[\Pk_n(f)]^2 &= \nabla_f(\Pk_{n+1}(f))  \oplus \xx_f^{\perp} \,\Pk_{n-1}(f) \,,
\\
[\Pk_n(f)]^2 &= \RR_f(\Pk_{n+1}(f))  \oplus \xx_f \,\Pk_{n-1}(f) \,,
\end{align*}
 where $\xx_f^{\perp}:= ({x_{f}}_2, \, -{x_{f}}_1)$.
 
Given a 3-d vector valued function $\vv$ defined in $P$, the tangential component $\vv_{f}$
of $\vv$ with respect to the face $f$ is defined by
\[
\vv_{f} := \vv - (\vv \cdot \nn_P^f)\nn_P^f \,.
\] 
Noticing that $\vv_{f}$ is a 3-d vector field tangent to $f$, with a slight abuse of notations we define the 2-d vector field $\vv_{\tau}$ on $\partial P$, such that on each face $f$ its restriction to the face $f$ satisfies
\[
\vv_{\tau}(\xx_f) := \vv_{f}(\xx) \,.
\]
The 3-d function $\vv$ and its 2-d tangential restriction $\vv_{\tau}$ are related by the  $\CC$-$\rr$ compatibility condition 
\begin{equation}
\label{eq:compatibility_CC_rr}
\CC \, \vv \cdot \nn_P^f = \rr_f \, \vv_{\tau} \qquad \text{on any $f \in \partial P$.}
\end{equation}
Moreover the Gauss theorem ensures the following $\rr$-tangent component relation
\begin{equation}
\label{eq:compatibility_rr_tf}
\int_f \rr_f \, \vv_{\tau} \,{\rm d}f = \int_{\partial f} \vv \cdot \tf_f \, {\rm d}s \qquad \text{for any $f \in \partial P$.}
\end{equation}
Finally, for any scalar function $v$ defined in $P$, we denote with $v_{\tau}$ the scalar function defined in $\partial P$  such that
\[
v_{\tau}(\xx_f) := v(\xx)_{|f} \qquad \text{on each face $f \in \partial P$.}
\]
\vspace{2ex}

On a generic mesh object $\OO$ with geometrical dimension $d$, on a face $f$ and on a polyhedron $P$ we define
following the functional spaces: 
\begin{align*}
 L^2_0(\OO) &:= \{ v \in L^2(\OO) \quad \text{s.t.} \quad \int_{\OO} v \, {\rm d}\OO = 0  \}\\
\ZZ(\OO) &: \{ \vv \in [H^1(\OO)]^2 \quad \text{s.t.} \quad \dd \,\vv = 0 \quad \text{in $\OO$} \} \\
{\boldsymbol H}(\dd, \, \OO)  &:= \{ \vv \in [L^2(\OO)]^d \quad \text{with} \quad \dd \,\vv  \in L^2(\OO) \}
\\
{\boldsymbol H}(\rr, \, f)  &:= \{ \vv \in [L^2(f)]^2 \quad \text{with} \quad \rr_f \,\vv  \in L^2(f) \}
\\
{\boldsymbol H}(\CC, \, P)  &:= \{ \vv \in [L^2(P)]^3 \quad \text{with} \quad \CC \,\vv  \in [L^2(P)]^3 \}
\\
\boldsymbol{\Sigma}(P)  &:= \{ \vv \in [L^2(P)]^3 \quad \text{with} \quad \CC \,\vv  \in [H^1(P)]^3 \}
\\
\boldsymbol{\Psi}(P) &:= \{\vv \in H(\dd, \, P) \cap H(\CC, \, P)
\quad \text{s.t.} \quad \dd \, \vv \in H^1(P) \,,  \quad 
 \CC \, \vv \in  [H^1(P)]^3
 \} 
\end{align*}
with the ``homogeneous counterparts''
\begin{align*}
\ZZ_0(\OO) &:= \{ \vv \in \ZZ(\OO) \quad \text{s.t.} \quad \vv = {\boldsymbol 0} \quad \text{on $\partial \OO$} \} \\
{\boldsymbol H}_0(\dd, \, \OO)  &:= \{ \vv \in H(\dd, \, \OO) \quad \text{s.t.} \quad  \vv \cdot \nn_{\OO} = 0 \quad \text{on $\partial \OO$} \}
\\
{\boldsymbol H}_0(\rr, \, f)  &:= \{ \vv \in H(\rr, \, f) \quad \text{s.t.} \quad  \vv \cdot \tf_{f} = 0 \quad \text{on $\partial f$} \}
\\
{\boldsymbol H}_0(\CC, \, P)  &:= \{ \vv \in H(\CC, \, P) \quad \text{s.t.} \quad  \vv_{\tau} = {\boldsymbol 0} \quad \text{on $\partial P$} \}
\\
 \boldsymbol{\Sigma}_0(P)  &:= \{ \vv \in \boldsymbol{\Sigma}(P) \quad \text{s.t.} \quad 
\vv_{\tau} = {\boldsymbol 0}  \quad \text{and}  \quad \CC \, \vv = \mathbf{0} 
\quad \text{on $\partial P$} \}
\\
 \boldsymbol{\Psi}_0(P) &:= \{\vv \in \boldsymbol{\Psi}(P)
\quad \text{s.t.} \quad 
\int_{\partial P} \vv \cdot \nn_P \,{\rm d}f = 0 \,, \quad
\vv_{\tau} = {\boldsymbol 0}  \quad \text{and}  \quad \CC \, \vv = \mathbf{0} 
\, \, \text{on $\partial P$} 
 \} \,.
\end{align*}

\begin{remark}
\label{rem:tau_wedge}
Notice that for each face $f \in \partial P$, the vector fields $\vv_f$ and $\vv \wedge \nn_P^f$ are different.
In fact both lie in the plane of the face $f$, but $\vv_f$ is $\pi/2$-rotation in $f$ (with respect to the axes $\boldsymbol{\tau}_1$ and $\boldsymbol{\tau}_2$)  of $\vv \wedge \nn_P^f$. However $\vv_f = {\boldsymbol 0}$ if and only if $\vv \wedge \nn_P^f = {\boldsymbol 0}$. 
For that reason in the definition of $\boldsymbol{\Psi}_0(P)$, we consider a slightly different, but sustantially equivalent, set of homogeneous boundary conditions to that considered in literature \cite{girault-raviart:book, bendali-dominguez-gallic:1985, amrouche-et-al:1998}. 
\end{remark}

Recalling that a sequence is exact if the image of each operator coincides with the kernel of the following one, and that $P$ is contractible, from \eqref{eq:poly_decomposition_3D_grad} and \eqref{eq:poly_decomposition_3D_curl}
it is easy to check that the following sequence is exact \cite{BBMR:2016}: 
\begin{equation}
\label{eq:poly_exact}
\R \, \xrightarrow[]{ \,\quad \text{{$i$}} \quad \,   } \,
\Pk_{n+2}(P) \, \xrightarrow[]{   \quad \text{{$\gr$}} \quad  }\,
[\Pk_{n+1}(P)]^3 \, \xrightarrow[]{ \, \, \, \text{{$\CC$}} \, \, \, }\,
[\Pk_{n}(P)]^3 \, \xrightarrow[]{  \, \, \, \, \text{{$\dd$}} \, \, \, \, }\,
\Pk_{n-1}(P) \, \xrightarrow[]{ \quad 0 \quad }\,
0
\end{equation}
where $i$  denotes the mapping that to every real number $r$ associates the constant function identically equal to $r$ and $0$  is the mapping that to every function associates the number $0$.

The three dimensional de Rham complex  with minimal regularity (in a contractible domain $\Omega$)
is provided by \cite{arnold-falk-winter:2006, demkowicz-et-al:2000}
\begin{equation*}
\R \, \xrightarrow[]{ \,\quad \text{{$i$}} \quad \,   } \,
H^1(\Omega) \, \xrightarrow[]{   \quad \text{{$\gr$}} \quad  }\,
{\boldsymbol H}(\CC, \, \Omega) \, \xrightarrow[]{ \, \, \, \text{{$\CC$}} \, \, \, }\,
{\boldsymbol H}(\dd, \, \Omega) \, \xrightarrow[]{  \, \, \, \, \text{{$\dd$}} \, \, \, \, }\,
L^2(\Omega) \, \xrightarrow[]{ \quad 0 \quad }\,
0 \,.
\end{equation*}
In this paper we consider the de Rham sub-complex with enhanced smoothness  \cite{evans-hughes:2013}
\begin{equation}
\label{eq:exact}
\R \, \xrightarrow[]{ \,\quad \text{{$i$}} \quad \,   } \,
H^1(\Omega) \, \xrightarrow[]{   \quad \text{{$\gr$}} \quad  }\,
\boldsymbol{\Sigma}(\Omega) \, \xrightarrow[]{ \, \, \, \text{{$\CC$}} \, \, \, }\,
[H^1(\Omega)]^3 \, \xrightarrow[]{  \, \, \, \, \text{{$\dd$}} \, \, \, \, }\,
L^2(\Omega) \, \xrightarrow[]{ \quad 0 \quad }
0 \,,
\end{equation}
that is suitable for the Stokes (Navier--Stokes) problem. 
Therefore our goal is to construct conforming (with respect to the decomposition $\Omega_h$) virtual element spaces
\begin{equation}
\label{eq:virtual_spaces}
W_h \subseteq H^1(\Omega) \,, \qquad
\Sg_h  \subseteq \Sg(\Omega) \,, \qquad
\VV_h \subseteq [H^1(\Omega)]^3 \,, \qquad
Q_h \subseteq L^2(\Omega)
\end{equation}
that mimic the complex \eqref{eq:exact}, i.e. are such that 
\begin{equation}
\label{eq:virtual_exact}
\R \, \xrightarrow[]{ \,\quad \text{{$i$}} \quad \,   } \,
W_h \, \xrightarrow[]{   \quad \text{{$\gr$}} \quad  }\,
\Sg_h \, \xrightarrow[]{ \, \, \, \text{{$\CC$}} \, \, \, }\,
\VV_h \, \xrightarrow[]{  \, \, \, \, \text{{$\dd$}} \, \, \, \, }\,
Q_h \, \xrightarrow[]{ \quad 0 \quad }
0 
\end{equation}
is an exact sub-complex of \eqref{eq:exact}.
To the best of our knowledge, no conforming finite elements sub-complex
of \eqref{eq:exact} exists (see for instance \cite{john-linke-merdon-neilan-rebholz:2017}).

%
%


\section{The virtual element spaces}
\label{sec:spaces}

The present section is devoted to the construction of conforming virtual element spaces \eqref{eq:virtual_spaces} that compose the virtual sub-complex \eqref{eq:virtual_exact}.
As we will see, the space $W_h$ consists of the lowest degree three dimensional nodal  VEM space \cite{projectors, BDR:2017}, whereas the spaces $\VV_h$ and $Q_h$ (that are the spaces actually used in the discretization of the problem) are the three dimensional counterparts of the inf-sup stable couple of spaces introduced in \cite{vacca:2018, BLV:2018}. 
Therefore the main novelty of the present section is in the construction of the $\Sg$-conforming space $\Sg_h$. 

In order to facilitate the reading, we present the spaces in the reverse order, from right to left in the sequence \eqref{eq:virtual_exact}. 
In particular, in accordance with \eqref{eq:virtual_exact}, the space $\Sg_h$ will be careful designed to fit $\CC \, \Sg_h \subseteq \VV_h$.

We stress that the readers mainly interested on the virtual elements approximation of the three dimensional Navier--Stokes equation (and not on the virtual de Rham sequence) can skip Subsection \ref{sub:Sg_h}, Subsection \ref{sub:W_h} and Section \ref{sec:derham}. 

\bigskip


One essential idea in the VEM construction is to define suitable (computable) polynomial projections. 
For any $n \in \N$ and each polyhedron/face $\OO$ we introduce the following polynomial projections:
\begin{itemize}

\item the $\boldsymbol{L^2}$\textbf{-projection} $\Pi_n^{0, \OO} \colon L^2(\OO) \to \Pk_n(\OO)$, defined for any $v \in L^2(\OO)$ by
\begin{equation}
\label{eq:P0_k^E}
\int_{\OO} q_n (v - \, {\Pi}_{n}^{0, \OO}  v) \, {\rm d} \OO = 0 \qquad  \text{for all $q_n \in \Pk_n(\OO)$,} 
\end{equation} 
with obvious extension for vector functions $\Pi_n^{0, \OO} \colon [L^2(\OO)]^3 \to [\Pk_n(\OO)]^3$, and tensor functions 
$\boldsymbol{\Pi}_{n}^{0, \OO} \colon [L^2(\OO)]^{3 \times 3} \to [\Pk_{n}(\OO)]^{3 \times 3}$,

\item the $\boldsymbol{H^1}$\textbf{-seminorm projection} ${\Pi}_{n}^{\nabla,\OO} \colon H^1(\OO) \to \Pk_n(\OO)$,  defined for any $v \in H^1(\OO)$ by
\begin{equation}
\label{eq:Pn_k^E}
\left\{
\begin{aligned}
& \int_{\OO} \gr  \,q_n \cdot \gr ( v - \, {\Pi}_{n}^{\nabla,\OO}   v) \, {\rm d} \OO = 0 \qquad  \text{for all $q_n \in \Pk_n(\OO)$,} \\
& \int_{\partial \OO} (v - \,  {\Pi}_{n}^{\nabla, \OO}  v) \,{\rm d}\sigma = 0 \, ,
\end{aligned}
\right.
\end{equation} 
with obvious extension for vector functions $\Pi_n^{\nabla, \OO} \colon [H^1(\OO)]^3 \to [\Pk_n(\OO)]^3$.
\end{itemize}
Let $k \geq 2$ be the polynomial degree of accuracy of the method.
We recall that, in standard finite element fashion, the virtual element spaces are defined element-wise and then are  assembled in such a way the global regularity requirements are satisfied.

\subsection{Scalar $L^2$-conforming space}
\label{sub:Q_h}

We start our construction with the rightmost discrete space $Q_h$ in \eqref{eq:virtual_exact}.
Since we are not requiring any smoothness on $Q_h$, the local space $Q_h(P)$ is simply defined by
\begin{equation*}
Q_h(P) := \Pk_{k-1}(P) \,,
\end{equation*}
having dimension (cf. \eqref{eq:poly_dim})
$\dim(Q_h(P)) = \pi_{k-1, 3}$.
%
The corresponding DoFs are chosen, defining for each $q \in Q_h(P)$
the following linear operators 
\begin{itemize}
\item $\mathbf{D}_{Q}$: the moments up to order $k-1$ of $q$, i.e.,
\[
\int_P q \, p_{k-1} \,{\rm d}P \qquad \text{for any $p_{k-1} \in \Pk_{k-1}(P)$.}
\]
\end{itemize}
The global space is given by
\begin{equation}
\label{eq:Q_h}
Q_h := \{ q \in L^2(\Omega) \quad \text{s.t.} \quad q_{|P} \in Q_h(P) \quad \text{for all $P \in \Omega_h$} \} \,. 
\end{equation}
It is straightforward to see that the dimension of $Q_h$ is 
\begin{equation}
\label{eq:dim_Q_h}
\dim(Q_h) = \pi_{k-1, 3} \,L_P \,.
\end{equation}


\subsection{Vector $H^1$-conforming VEM space}
\label{sub:V_h}

The subsequent space in the de Rham complex \eqref{eq:virtual_exact} is the vector-valued $H^1$-conforming virtual  element space $\VV_h$. 
The construction of $\VV_h$ has to combine two main ingredients: 
\begin{itemize}
\item to define a 3-d version of the space \cite{BLV:2018} that fits the conformity requirement (that definition follows the guidelines of Appendix of reference \cite{BLV:2017});
\item ``to play'' with the enhanced technique \cite{projectors} in order to achieve the computability of the polynomial projections stated in Proposition \ref{prp:proj}.
\end{itemize}
We first consider on each face $f$ of the element $P$, the face space
\begin{equation}
\label{eq:Bhat_h^n}
\begin{aligned}
\widehat{\B}_k(f):= \biggl\{
v \in H^1(f)   \, \, \, \, \text{s.t.} \, \, \, \,
& v_{|\partial f} \in C^0(\partial f) \,,
\quad
{v}_{|e} \in \Pk_k(e) \quad \text{for all $e \in \partial f$,}
\\
& \Delta_f \, v \in \Pk_{k+1}(f) \,, 
\\
& \left(  v - \Pi_k^{\nabla, f} v , \, \widehat{p}_{k+1} \right)_f  = 0\quad 
\text{for all $\widehat{p}_{k+1} \in  \widehat{\Pk}_{k+1 \setminus k-2}(f)$} 
\biggr \}
\end{aligned}
\end{equation}
and the boundary space
\begin{equation*}
\label{eq:Bhatf}
\widehat{\B}_k(\partial P):= \left\{ v \in C^0(\partial P) \quad \text{such that} \quad
v_{|f} \in \widehat{\B}_k(f) 
\quad \text{for any $f \in \partial P$} 
\right\}
\end{equation*}
that is a modification of the standard boundary nodal VEM \cite{BBMR:2016}. Indeed the  ``super-enhanced'' constraints (the last line in the definition \eqref{eq:Bhat_h^n})
are needed to exactly compute the polynomial projection $\Pi_{k+1}^{0, f}$ (see Proposition \ref{prp:proj}).

On the polyhedron $P$ we define the virtual element space $\VV_h(P)$
\begin{equation}
\label{eq:V_h^P}
\begin{aligned}
\VV_h(P) := \biggl\{  
\vv \in [H^1(P)]^3 \, \, \, \text{s.t.}  \, \, \,
& \vv_{|\partial P} \in [\widehat{\B}_k(\partial P)]^3 \,,
\\
&  \biggl\{
\begin{aligned}
& \boldsymbol{\Delta}    \vv  +  \nabla s \in \xx \wedge [\Pk_{k-1}(P)]^3,  \\
& {\rm div} \, \vv \in \Pk_{k-1}(P),
\end{aligned}
\biggr. \quad  \text{ for some $s \in  L^2_0(P)$} 
\\
\biggl.
&  \left(  \vv - \Pi_k^{\nabla, P} \vv  , \, \xx \wedge \widehat{\pp}_{k-1} \right)_P  = 0 \quad
\text{for all $\widehat{\pp}_{k-1} \in  [\widehat{\Pk}_{k-1 \setminus k-3}(P)]^3$}
\biggr \}.
\end{aligned}
\end{equation}
The definition above is the 3-d counterpart of the virtual elements \cite{BLV:2018}, in particular we remark that the enhancing constraints (the last line in \eqref{eq:V_h^P}) are necessary to achieve the computability of the $L^2$-projection $\Pi_k^{0, P}$(see Proposition \ref{prp:proj}).
Moreover, notice that the space $\VV_h(P)$ contains $[\Pk_k(P)]^3$ and this will guarantee the good approximation property of the space (cf. 
Theorem \ref{thm:u}).

\begin{proposition}
\label{prp:V_h^E_dofs}
The dimension of $\VV_h(P)$ is given by
\begin{equation*}
\dim(\VV_h(P)) = 
3 \, \ell_V + 3 \, (k-1)\, \ell_e + 3 \, \pi_{k-2, 2} \, \ell_f + 3 \, \pi_{k-2, 3}  \,.
\end{equation*}
Moreover, the following linear operators $\mathbf{D}_{\VV}$, split into five subsets  constitute a set of DoFs for $\VV_h(P)$:
\begin{itemize}
\item $\mathbf{D^1}_{\VV}$:  the values of $\vv$ at the vertexes of the polyhedron $P$,
\item $\mathbf{D^2}_{\VV}$: the values of $\vv$ at $k-1$ distinct points of every edge $e$ of the polyhedron $P$,
\item $\mathbf{D^3}_{\VV}$: the face moments of $\vv$ 
(split into normal and tangential components)
\begin{equation*}
\int_f (\vv \cdot \nn_P^f) \, p_{k-2} \, {\rm d}f \,,  \qquad
\int_f (\vv \cdot \boldsymbol{\tau}_1^f) \, p_{k-2} \, {\rm d}f \,, \qquad
\int_f (\vv \cdot \boldsymbol{\tau}_2^f) \, p_{k-2} \, {\rm d}f \,,
\end{equation*}
for all $p_{k-2} \in \Pk_{k-2}(f)$,
\item $\mathbf{D^4}_{\VV}$: the volume moments of $\vv$ 
\[
\int_P \vv \cdot (\xx \wedge \pp_{k-3})\, {\rm d}P \qquad 
\text{for all $\pp_{k-3} \in [\Pk_{k-3}(P)]^3$,}
\]
\item $\mathbf{D^5}_{\VV}$: the volume moments of ${\rm div} \,\vv$ 
\[
\int_P ({\rm div} \,\vv) \, \widehat{p}_{k-1} \, {\rm d}P \qquad \text{for all $\widehat{p}_{k-1} \in \widehat{\Pk}_{k-1 \setminus 0}(P)$.}
\] 
\end{itemize}
\end{proposition}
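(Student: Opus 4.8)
The plan is to proceed in two stages: first I would compute $\dim(\VV_h(P))$ and check that it agrees with the number of functionals in $\mathbf{D^1}_{\VV}$--$\mathbf{D^5}_{\VV}$, and then I would prove unisolvence, i.e. that a function $\vv \in \VV_h(P)$ annihilated by all the functionals must vanish; combined with the dimension count, this shows the DoF map $\vv \mapsto \mathbf{D}_{\VV}(\vv)$ is a bijection.

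For the dimension I would split the contributions into a boundary part and an interior part. The boundary space $[\widehat{\B}_k(\partial P)]^3$ is governed by the scalar enhanced face space $\widehat{\B}_k(f)$: prescribing the edge traces (continuous, piecewise $\Pk_k$) and the Laplacian $\Delta_f v \in \Pk_{k+1}(f)$ gives an unconstrained face dimension $\ell_V^f + (k-1)\ell_e^f + \pi_{k+1,2}$, and the ``super-enhancement'' constraints in the last line of \eqref{eq:Bhat_h^n} --- exactly $\pi_{k+1,2}-\pi_{k-2,2}$ independent conditions --- bring it back to $\ell_V^f + (k-1)\ell_e^f + \pi_{k-2,2}$. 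Summing over the faces with the usual vertex/edge sharing yields $\dim [\widehat{\B}_k(\partial P)]^3 = 3(\ell_V + (k-1)\ell_e + \pi_{k-2,2}\ell_f)$, matching $\mathbf{D^1}_{\VV}$, $\mathbf{D^2}_{\VV}$, $\mathbf{D^3}_{\VV}$. For the interior I would read the defining conditions as a Stokes problem: given the boundary trace, an element is fixed by a divergence $\dd\,\vv = q_{k-1}\in\Pk_{k-1}(P)$ (subject to the compatibility $\int_P q_{k-1} = \int_{\partial P}\vv\cdot\nn_P$) together with a rotational forcing $\boldsymbol{\Delta}\vv + \nabla s \in \xx\wedge[\Pk_{k-1}(P)]^3$. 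Using \eqref{eq:poly_decomposition_3D_grad} to realize $\xx\wedge[\Pk_{k-1}(P)]^3$ as a complement of $\nabla\Pk_{k+1}(P)$ inside $[\Pk_k(P)]^3$ (so the forcing-to-velocity map is injective), the unconstrained interior dimension is $3\pi_{k,3}-\pi_{k+1,3}+\pi_{k-1,3}$; the enhancement constraints on $[\widehat{\Pk}_{k-1\setminus k-3}(P)]^3$ in the last line of \eqref{eq:V_h^P} remove exactly $\dim(\xx\wedge[\widehat{\Pk}_{k-1\setminus k-3}(P)]^3)$ dimensions, leaving $3\pi_{k-2,3}$. A short computation with \eqref{eq:poly_decomposition_3D_grad} then confirms that $\mathbf{D^4}_{\VV}$ and $\mathbf{D^5}_{\VV}$ contribute $(3\pi_{k-2,3}-\pi_{k-1,3}+1)$ and $(\pi_{k-1,3}-1)$ functionals respectively, summing to $3\pi_{k-2,3}$, so the total dimension and the total number of DoFs both equal the claimed value.

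For unisolvence I would assume all functionals vanish and argue in three steps. First, the vanishing of $\mathbf{D^1}_{\VV}$, $\mathbf{D^2}_{\VV}$, $\mathbf{D^3}_{\VV}$ forces $\vv_{|\partial P}=\boldsymbol{0}$, invoking the unisolvence of the scalar face DoFs on each $\widehat{\B}_k(f)$ (which rests on the computability of $\Pi_k^{\nabla,f}$ from vertex, edge and moments up to order $k-2$, and hence on Proposition \ref{prp:proj}). Second, since $\vv_{|\partial P}=\boldsymbol{0}$ gives $\int_P \dd\,\vv = \int_{\partial P}\vv\cdot\nn_P = 0$, the vanishing of $\mathbf{D^5}_{\VV}$ (moments of $\dd\,\vv$ against $\widehat{\Pk}_{k-1\setminus 0}(P)$) upgrades to $\int_P(\dd\,\vv)\,p_{k-1}=0$ for every $p_{k-1}\in\Pk_{k-1}(P)$; as $\dd\,\vv\in\Pk_{k-1}(P)$ by definition, this yields $\dd\,\vv=0$. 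Third --- the crucial step --- I would close an energy identity. Testing the momentum relation $\boldsymbol{\Delta}\vv+\nabla s = \xx\wedge\pp_{k-1}$ against $\vv$ and integrating by parts (boundary terms vanish because $\vv_{|\partial P}=\boldsymbol{0}$, and the pressure term vanishes because $\dd\,\vv=0$) gives $-\int_P|\nabla\vv|^2 = \int_P(\xx\wedge\pp_{k-1})\cdot\vv$, so it suffices to show the right-hand side vanishes. Here I would use that $\dd\,\vv=0$ with $\vv_{|\partial P}=\boldsymbol{0}$ kills all moments $\int_P\vv\cdot\nabla r$, while $\mathbf{D^4}_{\VV}$ kills $\int_P\vv\cdot(\xx\wedge\pp_{k-3})$; by \eqref{eq:poly_decomposition_3D_grad} these together annihilate $\int_P\vv\cdot\boldsymbol{w}$ for all $\boldsymbol{w}\in[\Pk_{k-2}(P)]^3$, whence $\int_P\nabla\vv\cdot\nabla q = -\int_P\vv\cdot\boldsymbol{\Delta}q = 0$ for all $q\in[\Pk_k(P)]^3$, so $\Pi_k^{\nabla,P}\vv=\boldsymbol{0}$. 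The enhancement constraints then promote the moment annihilation from degree $k-3$ up to degree $k-1$, giving $\int_P\vv\cdot(\xx\wedge\pp_{k-1})=0$ for all $\pp_{k-1}$; hence $\int_P|\nabla\vv|^2=0$ and, with the zero trace, $\vv=\boldsymbol{0}$.

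I expect the main obstacle to be this last step: bridging the two-degree gap between the interior moment DoFs $\mathbf{D^4}_{\VV}$ (which only see $\xx\wedge[\Pk_{k-3}]^3$) and the rotational forcing $\xx\wedge[\Pk_{k-1}]^3$ that actually appears in the energy identity. This is precisely what the enhancement constraints in \eqref{eq:V_h^P} are designed to overcome, and making the argument rigorous requires carefully combining the divergence-free condition, the decomposition \eqref{eq:poly_decomposition_3D_grad}, and the identity $\Pi_k^{\nabla,P}\vv=\boldsymbol{0}$. A secondary technical point, needed on the dimension side, is verifying that the enhancement constraints are genuinely independent, so that they reduce the interior dimension by exactly the expected amount; this too follows from the same decomposition once the role of $\Pi_k^{\nabla,P}$ is understood.
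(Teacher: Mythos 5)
Your proposal is correct and takes essentially the same route as the paper, which only sketches this proof and defers the details to Proposition 3.1 of the cited 2-d reference: count the DoFs, establish boundary unisolvence of $\mathbf{D^1}_{\VV}$--$\mathbf{D^3}_{\VV}$ for $[\widehat{\B}_k(\partial P)]^3$ via the enhancement results of \cite{projectors}, then kill the interior part by exactly your energy argument (trace zero, $\dd\,\vv=0$ from $\mathbf{D^5}_{\VV}$, $\Pi_k^{\nabla,P}\vv=\boldsymbol{0}$ from $\mathbf{D^4}_{\VV}$ plus the decomposition \eqref{eq:poly_decomposition_3D_grad}, and the enhancement constraints bridging $\xx\wedge[\Pk_{k-3}]^3$ to $\xx\wedge[\Pk_{k-1}]^3$). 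Your dimension bookkeeping, including the interior count $3\pi_{k,3}-\pi_{k+1,3}+\pi_{k-1,3}$ reduced to $3\pi_{k-2,3}$ by the enhancement constraints, matches the paper's figures in \eqref{eq:D_V}, and the independence concern you flag is resolved automatically once unisolvence is combined with the inequality $\dim(\VV_h(P)) \geq \texttt{number}(\mathbf{D}_{\VV})$, as the paper does.
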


\begin{proof}
We only sketch the proof since it follows the guidelines  of Proposition 3.1 in \cite{vacca:2018} for the analogous 2-d space.
First of all, recalling \eqref{eq:poly_dim} and polynomial decomposition \eqref{eq:poly_decomposition_3D_grad}, simple computations yield
\begin{equation}
\label{eq:D_V}
\begin{gathered}
\texttt{number} (\mathbf{D^1}_{\VV})  = 3 \, \ell_V \,,
\qquad 
\texttt{number} (\mathbf{D^2}_{\VV})  = 3 \, (k-1) \,\ell_e \,,
\qquad
\texttt{number} (\mathbf{D^3}_{\VV})  = 3 \, \pi_{k-2, 2} \, \ell_f \,,
\\
\texttt{number} (\mathbf{D^4}_{\VV})  = 3 \, \pi_{k-2, 3} - \pi_{k-1, 3} + 1 \,,
\qquad
\texttt{number} (\mathbf{D^5}_{\VV})  = \pi_{k-1, 3} -1 \,,
\end{gathered}
\end{equation}
and therefore
\[
\texttt{number} (\mathbf{D}_{\VV}) = 3 \, \ell_V + 3 \, (k-1)\, \ell_e + 3 \, \pi_{k-2, 2} \, \ell_f + 3 \, \pi_{k-2, 3}  \,.
\]
Now employing Proposition 2 and Remark 5 in \cite{projectors}, it can be shown that the DoFs $\mathbf{D^1}_{\VV}$, $\mathbf{D^2}_{\VV}$, $\mathbf{D^3}_{\VV}$ are unisolvent for the space $[\widehat{\B}_k(\partial P)]^3$.
Therefore it holds that
\begin{equation}
\label{eq:dofs3}
\dim([\widehat{\B}_k(\partial P)]^3)  = 
3 \, \ell_V + 3 \, (k - 1) \, \ell_e + 3 \, \pi_{k-2, 2} \, \ell_f \,,
\end{equation}
which in turn implies (recalling \eqref{eq:V_h^P})
\[
\dim(\VV_h(P)) \geq \texttt{number} (\mathbf{D}_{\VV})
 \,.
\]
Now the result follows by proving that $\mathbf{D}_{\VV}(\vv) = {\boldsymbol 0}$ implies that $\vv$ is identically zero, that can be shown first works on $\partial P$ and then inside $P$.
As a consequence the linear operators $\mathbf{D}_{\VV}$ are unisolvent for $\VV_h$ and in particular
$\dim(\VV_h(P)) = \texttt{number} (\mathbf{D}_{\VV})$.
\end{proof}

The global space $\VV_h$ is defined by gluing the local spaces with the obvious associated sets of global DoFs:
\begin{equation}
\label{eq:V_h}
\VV_h := \{\vv \in [H^1(\Omega)]^3 \quad \text{s.t.} \quad \vv_{|P} \in \VV_h(P) \} \,.
\end{equation}
The dimension of $\VV_h$ is given by
\begin{equation}
\label{eq:dim_V_h}
\dim(\VV_h) = 
3 \, L_V + 3 \, (k-1)\, L_e + 3 \, \pi_{k-2, 2} \, L_f + 3 \, \pi_{k-2, 3} \, L_P  \,.
\end{equation}
We also consider the  discrete  local kernel 
\begin{equation*}
\ZZ_h(P) := \left \{
\vv \in \VV_h(P) \quad \text{s.t.} \quad \int_{P} \dd \, \vv \, q \,{\rm dP}= 0 \quad \text{for  all $q \in Q_h(P)$} 
\right\}\,,
\end{equation*}
and the corresponding  global version 
\begin{equation}
\label{eq:Z_h}
\ZZ_h := \left\{ \vv \in \VV_h \quad \text{s.t.} \quad \int_{\Omega} \dd \, \vv \, q \,{\rm d\Omega}= 0 \quad \text{for  all $q \in Q_h$} \right\} \,.
\end{equation}
A crucial observation is that, extending to the 3-d case the result in  \cite{BLV:2017},  the proposed discrete spaces \eqref{eq:Q_h} and \eqref{eq:V_h} are such that
$
\dd \, \VV_h \subseteq Q_h
$.
As a consequence the considerable kernel inclusion holds
\begin{equation}
\label{eq:kernel_inclusion}
\ZZ_h \subseteq \ZZ \,.
\end{equation}
The inclusion here above and explicit computations (cf. \eqref{eq:D_V}) yield that
\begin{equation*}
\dim(\ZZ_h) = 
3 \, L_V + 3 \, (k-1)\, L_e + 3 \, \pi_{k-2, 2} \, L_f + (3 \, \pi_{k-2, 3} - \pi_{k-1, 3}) \, N_P \,.
\end{equation*}
The notable property \eqref{eq:kernel_inclusion} leads to a series of important advantages, as explored in 
\cite{linke-merdon:2016, john-linke-merdon-neilan-rebholz:2017, BLV:2018}. 

\begin{remark}
\label{rm:projectionPD}
In the third line of Definition \eqref{eq:V_h^P} the $H^1$-seminorm projection $\Pi^{\nabla, P}_k$ can be actually replaced by any polynomial projection $\Pi^{P}_k$ that is computable on the basis of the DoFs $\mathbf{D}_{\VV}$ (in the sense of Proposition \ref{prp:proj}).
This change clearly  propagates throughout the rest of the analysis (see Definitions \eqref{eq:Sg_h^P} and \eqref{eq:Sg_h^Pnew}). 
An analogous observation holds also for the operator $\Pi^{\nabla, f}_k$ in the third line of Definition \eqref{eq:Bhat_h^n}. The present remark allows to make use of computationally cheaper projections, as done in the numerical tests of Section \ref{sec:num_test}.
\end{remark}

\subsection{Vector $\Sg$-conforming VEM space}
\label{sub:Sg_h}

In the present subsection we consider the construction of the $\Sg$-conforming virtual space $\Sg_h$ in \eqref{eq:virtual_exact}. As mentioned before, this brick constitutes the main novelty in the foundation of the virtual de Rham sequence \eqref{eq:virtual_exact}.
The core ideas in building such space are the following:
\begin{itemize}
\item the space is careful designed to satisfy $\CC \, \Sg_h = \ZZ_h$;
\item the DoFs are conveniently chosen in order to have a direct correspondence   between the $\CC$ of the Lagrange-type basis functions of $\Sg_h$ and the Lagrange basis functions of $\VV_h$; 
\item the boundary space and the boundary DoFs are picked in accordance with the global conformity requirements ensuing
from the regularity of the space $\Sg$.
\end{itemize}
We start by introducing on each face $f \in \partial P$  the face space 
\begin{equation}
\label{eq:Sf}
\begin{aligned}
\boldsymbol{S}_k(f) := \biggl\{
\ssigma \in {\boldsymbol H}(\dd_f, \, f) \cap {\boldsymbol H}(\rr_f, \, f)
\quad \text{s.t.} \quad
&  ({\ssigma} \cdot \tf_e)_{|e} \in \Pk_0(e) &
 \text{$\forall e \in \partial f$,} 
\\
& \dd_f \, \ssigma = 0 \,,
\\ 
& \rr_f \, \ssigma \in \widehat{\B}_k(f) 
&  & \quad \biggr\} \,,
\end{aligned}
\end{equation}
and the boundary space
\begin{equation}
\label{eq:S}
\begin{aligned}
{\boldsymbol S}_k(\partial P):= \biggl\{
\ssigma \in [L^2(\partial P)]^3 \quad \text{s.t.} \quad
&\ssigma_{\tau} \in {\boldsymbol S}_k(f)
&\text{for any $f \in \partial P$,}
\\
& ({\ssigma_{f_1}} \cdot \tf_e)_{|e} = ({\ssigma_{f_2}} \cdot \tf_e)_{|e}  
&\forall 
e \subseteq \partial f_1 \cap \partial f_2 \,, &\quad f_1, f_2 \in \partial P 
\biggr \} \,.
\end{aligned}
\end{equation}
%
%
Concerning the  differential problem in definition \eqref{eq:Sf}, 
we recall that on simply connected polygon $f$, given two sufficiently regular functions ${\boldsymbol g}$ and ${\boldsymbol h}$ defined on $f$ and a sufficiently regular function $\omega$ defined on $\partial f$,  the problem
\begin{equation}
\label{eq:hdivhrot}
\left \{
\begin{aligned}
& \text{find $\ssigma   \in {\boldsymbol H}(\dd_f, \, f) \cap {\boldsymbol H}(\rr_f, \, f)$ s.t.}
\\
&
\begin{aligned}
& \dd_f \, \ssigma =  {\boldsymbol g} \qquad &\text{in $f$,} \\
& \rr_f \, \ssigma =  {\boldsymbol h} \qquad &\text{in $f$,} \\
& \ssigma \cdot \tf_f = \omega \qquad &\text{on $\partial f$,} 
\end{aligned}
\end{aligned}
\right .
\end{equation}
is well posed if and only if, in accordance with \eqref{eq:compatibility_rr_tf}, the following holds 
\begin{equation}
\label{eq:compatibility_rr_tf_prb}
\int_{f} {\boldsymbol h} \, {\rm d}f = \int_{\partial f} \omega \, {\rm d}s \,.
\end{equation}
On the polyhedron $P$ we define the virtual space: 
\begin{equation}
\label{eq:Sg_h^P}
\begin{aligned}
\Sg_h(P) := \biggl\{  
&\ffi \in \boldsymbol{\Psi}(P) \quad \text{s.t.} \quad
 \ffi_{|\partial P}  \in {\boldsymbol S}_k(\partial P) \, , 
\quad
\int_{\partial P}  \ffi \cdot \nn_P \, {\rm d}f = 0 \,, 
\biggr.
\\
& \qquad (\CC \, \ffi)_{|\partial P}   \in [\widehat{\B}_k(\partial P)]^3 \,,
\\
& \qquad
\int_P \dl \, \ffi \cdot \dl \, \ppsi \, {\rm d}P = 
\int_P  \widetilde{\pp}_{k-1} \cdot \ppsi \, {\rm d}P,
\qquad
\text{$\forall \ppsi \in \boldsymbol{\Psi}_0(P)$,}
\\ 
& \qquad \qquad \qquad \qquad \qquad
\text{for some  $\widetilde{\pp}_{k-1} \in [\Pk_{k-1}(P)]^3 \cap \ZZ(P)$ 
,}
\\
\biggl.
& \qquad \left(  \CC \, \ffi - \Pi_k^{\nabla, P} \CC \, \ffi  , \, \xx \wedge \widehat{\pp}_{k-1} \right)_P  = 0\quad 
\text{$\forall \, \widehat{\pp}_{k-1} \in  [\widehat{\Pk}_{k-1 \setminus k-3}(P)]^3$}
\biggr \}.
\end{aligned}
\end{equation}
We stress that the variational problem stated in \eqref{eq:Sg_h^P} is coupled with the non homogeneous version of the boundary conditions in \cite{girault-raviart:book, amrouche-et-al:1998}. 
In fact, in order to force $\Sg$-conforming regularity, for any function $\ffi \in \Sg_h(P)$,  we need to prescribe $\CC \, \ffi$ and $\ffi_{\tau}$ on $\partial P$.
We address the well-posedness of the biharmonic problem in definition \eqref{eq:Sg_h^P} in the Appendix.

Note that, in accordance with the target $\CC \, \Sg_h  \subseteq \VV_h$, the second and the last line in definition \eqref{eq:Sg_h^P} are the $\CC$ version of the first and last line in definition \eqref{eq:V_h^P}. Whereas we will see that $\CC$ of the solutions of the biharmonic problem  in  \eqref{eq:Sg_h^P} are solutions to the Stokes problem in \eqref{eq:V_h^P} (see Proposition \ref{prp:inclusion2}).

\begin{proposition}
\label{prp:Sg_h}
The dimension of $\Sg_h(P)$ is given by
\begin{equation*}
\dim(\Sg_h(P)) = 
3 \, \ell_V + (3 k-2)\, \ell_e + (3 \, \pi_{k-2, 2} -1)\, \ell_f + 3 \, \pi_{k-2, 3} - \pi_{k-1, 3} + 1 \,.
\end{equation*}

Moreover, the following linear operators $\mathbf{D}_{\Sg}$, split into five subsets  constitute a set of DoFs for $\Sg_h(P)$:
\begin{itemize}
\item $\mathbf{D^1}_{\Sg}$:  the values of $\CC \, \ffi$ at the vertexes of the polyhedron $P$,
\item $\mathbf{D^2}_{\Sg}$: the values of $\CC \, \ffi$ at $k-1$ distinct points of every edge $e$ of the polyhedron $P$,
\item $\mathbf{D^3}_{\Sg}$: the face moments of $\CC \, \ffi$ 
(split into normal and tangential components)
\begin{gather*}
\int_f (\CC \, \ffi \cdot \nn_P^f) \, \widehat{p}_{k-2} \, {\rm d}f \,,  \qquad
\text{for all $\widehat{p}_{k-2}  \in \widehat{\Pk}_{k-2 \setminus 0}(f)$,}
\\
\int_f (\CC \, \ffi \cdot \boldsymbol{\tau}_1^f) \, p_{k-2} \, {\rm d}f \,, \qquad
\int_f (\CC \, \ffi \cdot \boldsymbol{\tau}_2^f) \, p_{k-2} \, {\rm d}f \,,
\qquad \text{for all $p_{k-2} \in \Pk_{k-2}(f)$},
\end{gather*}
\item $\mathbf{D^4}_{\Sg}$: the volume moments of $\CC \, \ffi$ 
\[
\int_P \CC \, \ffi \cdot (\xx \wedge \pp_{k-3})\, {\rm d}P \qquad 
\text{for all $\pp_{k-3} \in [\Pk_{k-3}(P)]^3$,}
\]
\item $\mathbf{D^5}_{\Sg}$:  the edge mean value of $\ffi \cdot \tf_e$, i.e. 
\[
\frac{1}{|e|} \, \int_e \ffi \cdot \tf_e \,{\rm d}s \,.
\]
\end{itemize}
\end{proposition}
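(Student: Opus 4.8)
The plan is to mirror the argument of Proposition~\ref{prp:V_h^E_dofs}, exploiting the fact that $\mathbf{D^1}_{\Sg}$--$\mathbf{D^4}_{\Sg}$ are precisely the operators $\mathbf{D^1}_{\VV}$--$\mathbf{D^4}_{\VV}$ evaluated on $\CC\,\ffi$, while $\mathbf{D^5}_{\Sg}$ supplies the information needed to recover the curl-free part of $\ffi$. First I would count the operators. Using \eqref{eq:poly_dim} one gets $\texttt{number}(\mathbf{D^1}_{\Sg})=3\ell_V$, $\texttt{number}(\mathbf{D^2}_{\Sg})=3(k-1)\ell_e$ and $\texttt{number}(\mathbf{D^5}_{\Sg})=\ell_e$; for $\mathbf{D^3}_{\Sg}$ the normal part contributes $\pi_{k-2,2}-1$ (the constant being excluded) and the two tangential parts $2\,\pi_{k-2,2}$, giving $(3\pi_{k-2,2}-1)\ell_f$; finally, by \eqref{eq:poly_decomposition_3D_grad}, $\texttt{number}(\mathbf{D^4}_{\Sg})=\dim(\xx\wedge[\Pk_{k-3}(P)]^3)=3\pi_{k-2,3}-\pi_{k-1,3}+1$, exactly as for $\mathbf{D^4}_{\VV}$ in \eqref{eq:D_V}. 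Collecting the edge terms into $(3k-2)\ell_e$ reproduces the asserted value of $\dim(\Sg_h(P))$, so it suffices to prove that $\mathbf{D}_{\Sg}$ is unisolvent.

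The reason a constant is omitted from the normal face moments of $\mathbf{D^3}_{\Sg}$ is the heart of the matching: by \eqref{eq:compatibility_rr_tf} together with \eqref{eq:compatibility_CC_rr}, the zero-order moment $\int_f \CC\,\ffi\cdot\nn_P^f\,{\rm d}f=\int_f \rr_f\,\ffi_\tau\,{\rm d}f=\int_{\partial f}\ffi\cdot\tf_f\,{\rm d}s$ is already fixed by the constant edge values $\ffi\cdot\tf_e$, i.e.\ by $\mathbf{D^5}_{\Sg}$. Hence the boundary operators of $\mathbf{D}_{\Sg}$ carry exactly the full set of $\VV$-type boundary DoFs of $\CC\,\ffi$ on $\partial P$ together with the edge circulations. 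Using the unisolvence of $[\widehat{\B}_k(\partial P)]^3$ recalled in the proof of Proposition~\ref{prp:V_h^E_dofs} and the well-posedness of the face problem \eqref{eq:hdivhrot} subject to \eqref{eq:compatibility_rr_tf_prb}, one identifies the dimension of the admissible boundary data of $\Sg_h(P)$; coupling this with the well-posedness of the biharmonic problem in \eqref{eq:Sg_h^P} (established in the Appendix), whose interior freedom is parametrized by $\widetilde{\pp}_{k-1}\in[\Pk_{k-1}(P)]^3\cap\ZZ(P)$, yields the lower bound $\dim(\Sg_h(P))\ge\texttt{number}(\mathbf{D}_{\Sg})$.

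It remains to prove injectivity, so I assume $\mathbf{D}_{\Sg}(\ffi)=\boldsymbol{0}$. From $\mathbf{D^5}_{\Sg}=0$ and the constancy of $\ffi\cdot\tf_e$ along each edge we get $\ffi\cdot\tf_e=0$ on $\partial P$; the compatibility argument above then forces every zero-order normal face moment of $\CC\,\ffi$ to vanish, so that, together with $\mathbf{D^1}_{\Sg}$--$\mathbf{D^4}_{\Sg}=0$, all operators of $\mathbf{D}_{\VV}$ annihilate $\CC\,\ffi$. Since $\CC\,\ffi\in\VV_h(P)$ (the content of Proposition~\ref{prp:inclusion2}, built into the second, fourth and last lines of \eqref{eq:Sg_h^P}), Proposition~\ref{prp:V_h^E_dofs} gives $\CC\,\ffi=\boldsymbol{0}$. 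As $P$ is contractible, $\ffi=\gr w$ for some $w\in H^1(P)$; on each face $\dd_f\,\ffi_\tau=\Delta_f w=0$ while $w$ is constant along every edge, so the maximum principle makes $w$ constant on $\partial P$ and $\ffi_\tau=\boldsymbol{0}$. Taking this constant to be zero and testing the variational identity of \eqref{eq:Sg_h^P} with $\ppsi=\ffi\in\boldsymbol{\Psi}_0(P)$, the right-hand side $\int_P\widetilde{\pp}_{k-1}\cdot\gr w\,{\rm d}P$ vanishes because $\dd\,\widetilde{\pp}_{k-1}=0$ and $w=0$ on $\partial P$; using $\dl\,\ffi=\gr\Delta w$ this leaves $\int_P|\gr\Delta w|^2\,{\rm d}P=0$, whence $\Delta w$ is constant. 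The normalization $\int_{\partial P}\ffi\cdot\nn_P\,{\rm d}f=\int_P\Delta w\,{\rm d}P=0$ kills that constant, and Dirichlet uniqueness yields $w\equiv0$, i.e.\ $\ffi=\boldsymbol{0}$. Injectivity and the lower bound together give $\dim(\Sg_h(P))=\texttt{number}(\mathbf{D}_{\Sg})$ and the unisolvence of $\mathbf{D}_{\Sg}$.

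I expect the main obstacle to be the passage $\CC\,\ffi\in\VV_h(P)$, that is, proving that the curl of a biharmonic solution of \eqref{eq:Sg_h^P} solves the Stokes-type problem of \eqref{eq:V_h^P} (the role of Proposition~\ref{prp:inclusion2}), together with the delicate control of the curl-free interior part, which rests on the single scalar normalization $\int_{\partial P}\ffi\cdot\nn_P=0$ and on the Appendix well-posedness; establishing the matching dimension lower bound through the boundary spaces ${\boldsymbol S}_k(\partial P)$ and $[\widehat{\B}_k(\partial P)]^3$ is the other technically demanding point.
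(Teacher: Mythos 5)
Your proposal has the same two--step skeleton as the paper (count the operators, then prove injectivity) and the counting itself is identical, but the two halves diverge from the paper's proof, and one step has a genuine gap. For the dimension bound, the paper does \emph{not} work with \eqref{eq:Sg_h^P} directly: it first reduces to the non-enhanced space $\widetilde{\Sg}_h(P)$, obtained by dropping the last line of \eqref{eq:Sg_h^P} \emph{and} replacing $\widetilde{\pp}_{k-1}\in[\Pk_{k-1}(P)]^3\cap\ZZ(P)$ by $\widetilde{\pp}_{k-3}\in[\Pk_{k-3}(P)]^3\cap\ZZ(P)$, computes $\dim(\widetilde{\Sg}_h(P))=\texttt{number}(\mathbf{D}_{\Sg})$ exactly from Theorem \ref{thm:wellposed} (data dimensions via \eqref{eq:z_CC}, \eqref{eq:dofs3}, \eqref{eq:dofs4}, minus $\dim(\widehat{\B}_k(f))\,\ell_f$ for the compatibility \eqref{eq:dofs0}), and only then transfers the result to $\Sg_h(P)$ by standard enhancement arguments. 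Your count parametrizes the interior freedom by $\widetilde{\pp}_{k-1}\in[\Pk_{k-1}(P)]^3\cap\ZZ(P)$, which overshoots $\texttt{number}(\mathbf{D}_{\Sg})$ by $\dim\bigl(\xx\wedge[\widehat{\Pk}_{k-1\setminus k-3}(P)]^3\bigr)$, and you never invoke the enhancement constraints (the last line of \eqref{eq:Sg_h^P}) to absorb this surplus; as written, the claimed lower bound $\dim(\Sg_h(P))\ge\texttt{number}(\mathbf{D}_{\Sg})$ does not follow. The gap is repairable --- each constraint lowers the dimension by at most one and the number of effectively independent constraints equals the surplus, since the constraint functionals factor through $\xx\wedge\widehat{\pp}_{k-1}$ --- but this bookkeeping is precisely what the paper sidesteps through the $\widetilde{\Sg}_h(P)$ detour, so you must either carry it out or reduce to the non-enhanced space as the paper does.

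Your injectivity argument, by contrast, is correct and genuinely different. The paper stays at the boundary level: it deduces $\CC\,\ffi={\boldsymbol 0}$ and $\ffi_{\tau}={\boldsymbol 0}$ only on $\partial P$ (via \eqref{eq:unisolvency2a} and the well-posedness of \eqref{eq:hdivhrot}), then tests the biharmonic identity with $\ppsi=\ffi$, rewrites the right-hand side through \eqref{eq:z_CC} and integration by parts as $\mathbf{D^4}_{\Sg}$-moments, and concludes from $\|\dl\,\cdot\|_{0,P}$ being a norm on $\boldsymbol{\Psi}_0(P)$ (cf.\ \eqref{eq:norm_equivalence}). You instead import part ${\boldsymbol{(i1)}}$ of Proposition \ref{prp:inclusion2} to place $\CC\,\ffi$ in $\VV_h(P)$, kill it in the \emph{whole} of $P$ by the unisolvence of $\mathbf{D}_{\VV}$, and finish with a scalar potential: $\ffi=\gr w$, facewise harmonicity plus the maximum principle give $\ffi_\tau={\boldsymbol 0}$, the energy identity gives $\gr\Delta w={\boldsymbol 0}$, and the normalization $\int_{\partial P}\ffi\cdot\nn_P\,{\rm d}f=0$ together with Dirichlet uniqueness yields $\ffi={\boldsymbol 0}$. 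This is not circular --- the proof of ${\boldsymbol{(i1)}}$ in Proposition \ref{prp:inclusion2} rests on \eqref{eq:bih-sto} and Lemma 5.1 of \cite{girault-raviart:book}, not on Proposition \ref{prp:Sg_h} --- but it inverts the paper's logical order and front-loads the biharmonic-to-Stokes equivalence, which you rightly flag as the heavy lifting yet leave unproved. What your route buys is a more elementary endgame (ordinary Laplace uniqueness instead of the norm property \eqref{eq:norm_equivalence} on $\boldsymbol{\Psi}_0(P)$); what it costs is the forward dependence on Section \ref{sec:derham} and, more seriously, the unaddressed enhancement count above.
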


\begin{proof}
We start the proof counting the number of the linear operators $\mathbf{D}_{\Sg}$. Using similar  computations as in \eqref{eq:D_V} we have:
\begin{equation*}
\begin{gathered}
\texttt{number} (\mathbf{D^1}_{\Sg})  = 3 \, \ell_V \,,
\qquad
\texttt{number} (\mathbf{D^2}_{\Sg})  = 3 \, (k-1) \,\ell_e \,,
\qquad
\texttt{number} (\mathbf{D^3}_{\Sg})  = (3 \, \pi_{k-2, 2} -1) \, \ell_f \,,
\\
\texttt{number} (\mathbf{D^4}_{\Sg})  = 3 \, \pi_{k-2, 3} - \pi_{k-1, 3} + 1 \,,
\qquad
\texttt{number} (\mathbf{D^5}_{\Sg})  =  \ell_e\,,
\end{gathered}
\end{equation*}
and thus
\begin{equation}
\label{eq:numberDSg}
\texttt{number} (\mathbf{D}_{\Sg}) = 3 \, \ell_V + (3 k-2)\, \ell_e + (3 \, \pi_{k-2, 2} -1)\, \ell_f + 3 \, \pi_{k-2, 3} - \pi_{k-1, 3} + 1 \,.
\end{equation}
For sake of simplicity, we prove that $\mathbf{D}_{\Sg}$ constitutes a set of DoFs for the non-enhanced space associated with $\Sg_h(P)$, i.e. the space 
$\widetilde{\Sg}_h(P)$ obtained by dropping the last line in \eqref{eq:Sg_h^P} (the enhanced constraints) and by taking in the biharmonic system $\widetilde{\pp}_{k-3} \in [\Pk_{k-3}(P)]^3 \cap \ZZ(P)$, i.e. the space
\[
\begin{aligned}
\widetilde{\Sg}_h(P) := \biggl\{  
&\ffi \in \boldsymbol{\Psi}(P) \quad \text{s.t.} \quad
 \ffi_{|\partial P}  \in {\boldsymbol S}_k(\partial P) \, , 
\quad
\int_{\partial P}  \ffi \cdot \nn_P \, {\rm d}f = 0 \,, 
\biggr.
\\
& \qquad (\CC \, \ffi)_{|\partial P} \in [\widehat{\B}_k(\partial P)]^3\,,
\\
& \qquad
\int_P \dl \, \ffi \cdot \dl \, \ppsi \, {\rm d}P = 
\int_P  \widetilde{\pp}_{k-3} \cdot \ppsi \, {\rm d}P,
\qquad
\text{$\forall \ppsi \in \boldsymbol{\Psi}_0(P)$,}
\\ 
& \qquad \qquad \qquad \qquad \qquad
\text{for some  $\widetilde{\pp}_{k-3} \in [\Pk_{k-3}(P)]^3 \cap \ZZ(P)$}
\biggr \}\,.
\end{aligned}
\]
Once the proof for $\widetilde{\Sg}_h(P)$ is given, the extension to the original space $\Sg_h(P)$ easily follows by employing standard techniques for VEM enhanced spaces (see \cite{projectors} and Proposition 5.1 in \cite{BMV:2018}).

Employing Theorem \ref{thm:wellposed}, given
\begin{itemize}
\item  $\widetilde{\pp}_{k-3} \in [\Pk_{k-3}(P)]^3 \cap \ZZ(P)$,
\item  ${\boldsymbol g} \in  [\widehat{\B}_k(\partial P)]^3$,
\item  ${\boldsymbol h} \in {\boldsymbol S}_k(\partial P)$ satisfying the compatibility condition (cf. \eqref{eq:compatibility_CC_rr})
\begin{equation}
\label{eq:dofs0}
{\boldsymbol g} \cdot \nn_P^f = \rr_f \, {\boldsymbol h}_{\tau} \quad \text{on  any $f \in \partial P$,}
\end{equation}
\end{itemize}
there exists a unique function $\ffi \in \boldsymbol{\Psi}(P)$ such that 
\[
\left \{
\begin{aligned}
&\int_P \dl \, \ffi \cdot \dl \, \ppsi \, {\rm d}P = 
\int_P  \widetilde{\pp}_{k-3} \cdot \ppsi \, {\rm d}P,
\qquad
&\text{for all $\ppsi \in \boldsymbol{\Psi}_0(P)$,}
\\
&\int_{\partial P} \ffi \cdot \nn_P \,{\rm d}f = 0 \,,
\\
&\ffi_{\tau}  =  {\boldsymbol h}_{\tau}  \qquad &\text{on $\partial P$,}
\\
&\CC \, \ffi = {\boldsymbol g}  \qquad &\text{on $\partial P$.}
\end{aligned}
\right .
\] 
Therefore
\begin{equation}
\label{eq:dofs1}
\begin{split}
\dim(\widetilde{\Sg}_h(P))  
&= \dim([\Pk_{k-3}(P)]^3 \cap \ZZ(P)) + 
  \dim([\widehat{\B}_k(\partial P)]^3) +
  \dim({\boldsymbol S}_k(\partial P)) 
   - \dim(\widehat{\B}_k(f)) \, \ell_f
\end{split}  
\end{equation}
where
the last term $(-\dim(\widehat{\B}_k(f)) \, \ell_f)$ ensues from the compatibility condition \eqref{eq:dofs0}.
We calculate the addenda in the right hand side of \eqref{eq:dofs1}.
Regarding the first term in \eqref{eq:dofs1}, we preliminary note
that the following characterization ensues from the 
exact sequence \eqref{eq:poly_exact} and polynomial decomposition \eqref{eq:poly_decomposition_3D_grad}
\begin{equation}
\label{eq:z_CC}
[\Pk_{k-3}(P)]^3 \cap \ZZ(P) = \CC \, \left( [\Pk_{k-2}(P)]^3\right) =
\CC \left( \xx \wedge [\Pk_{k-3}(P)]^3 \right) \,.
\end{equation}
Employing again the exact sequence \eqref{eq:poly_exact}, $\CC$ restricted to $\left( \xx \wedge [\Pk_{k-3}(P)]^3 \right)$ is actually an isomorphism, therefore from \eqref{eq:z_CC} and \eqref{eq:D_V} follows that
\begin{equation}
\label{eq:dofs2}
\dim([\Pk_{k-3}(P)]^3 \cap \ZZ(P))  = 
\dim \left( \xx \wedge [\Pk_{k-3}(P)]^3\right)=
3 \, \pi_{k-2, 3} - \pi_{k-1, 3} + 1 \,.
\end{equation}
From definitions \eqref{eq:Sf} and \eqref{eq:S} and since problem \eqref{eq:hdivhrot} is well-posed, direct computations
yield
\begin{equation}
\label{eq:dofs4}
\dim({\boldsymbol S}_k(\partial P)) = \ell_e + (\dim(\widehat{\B}_k(f)) - 1) \,
\ell_f 
\end{equation}
where the $-1$ in the formula above is due to the compatibility condition \eqref{eq:compatibility_rr_tf_prb}.

Collecting  \eqref{eq:dofs2}, \eqref{eq:dofs3} and \eqref{eq:dofs4} in \eqref{eq:dofs1} (compare with \eqref{eq:numberDSg}) we get
\[
\dim(\widetilde{\Sg}_h(P))  = \texttt{number} (\mathbf{D}_{\Sg}) \,.
\] 
Having proved that $\texttt{number} (\mathbf{D}_{\Sg})$ is  equal to $\dim(\widetilde{\Sg}_h(P))$, in order to validate that the linear operators $\mathbf{D}_{\Sg}$ constitute a set of DoFs for $\widetilde{\Sg}_h(P)$ we have to check that they are unisolvent.
Let $\ffi \in \widetilde{\Sg}_h(P)$ such that $\mathbf{D}_{\Sg}(\ffi) = {\boldsymbol 0}$, we need to show that $\ffi$ is identically zero.
It is straightforward that 
$\mathbf{D^1}_{\Sg}(\ffi) = \mathbf{D^2}_{\Sg}(\ffi) = {\boldsymbol 0}$ implies  
\begin{equation}
\label{eq:unisolvency1}
(\CC \, \ffi)_{|\partial f} = {\boldsymbol 0} \qquad \text{for any $f \in \partial P$.}
\end{equation}
Recalling the well known results for nodal boundary spaces \cite{BBMR:2016}, it is quite obvious to check that \eqref{eq:unisolvency1} $\mathbf{D^3}_{\Sg}(\ffi) = {\boldsymbol 0}$ implies 
\begin{equation*}
\label{eq:unisolvency2}
(\CC \, \ffi)_{f} = {\boldsymbol 0} \qquad \text{for any $f \in \partial P$.}
\end{equation*}
In order to get also the normal component of $(\CC \, \ffi)_{|f}$ equal to zero, based on  $\mathbf{D^3}_{\Sg}(\ffi) = {\boldsymbol 0}$, 
it is sufficient to observe that the compatibility conditions \eqref{eq:compatibility_CC_rr} and \eqref{eq:compatibility_rr_tf}
give
\begin{equation}
\label{eq:unisolvency2a}
\begin{split}
\int_f \CC \, \ffi \cdot \nn_P^f \, {\rm d}f &= 
\int_{f} \rr_f \, \ffi_{\tau} \,{\rm d}f  =
\int_{\partial f} \ffi \cdot \tf_f \,{\rm d}s 
\\
&=
\sum_{e \in \partial f} \int_{ e} \ffi \cdot \tf_f^e \,{\rm d}s =
\sum_{e \in \partial f} |e| \, \mathbf{D^5}_{\Sg, e}(\ffi) \, \tf_e \cdot \tf_f^e 
\qquad \text{for any $f \in \partial P$.}
\end{split}
\end{equation}
that is equal to 0 since  $\mathbf{D^5}_{\Sg}(\ffi) = {\boldsymbol 0}$.
Therefore we have proved that 
\begin{equation}
\label{eq:unisolvency3}
\CC \, \ffi = {\boldsymbol 0} \qquad \text{on $\partial P$.}
\end{equation}
Moreover, being $\mathbf{D^5}_{\Sg}(\ffi) = {\boldsymbol 0}$,
from \eqref{eq:unisolvency3} and \eqref{eq:compatibility_CC_rr} and definition \eqref{eq:Sf} 
for any $f \in \partial P$ we infer
\[
(\ffi_f \cdot \tf_e)_{|e} = 0 \quad \text{for any $e \in \partial f$,}
\qquad 
\dd_f \, \ffi_{\tau} = 0 \quad \text{and} \quad
\rr_f \, \ffi_{\tau} = 0 \quad \text{on $f$,} 
\] 
and thus, being \eqref{eq:hdivhrot} well-posed, we obtain
\begin{equation}
\label{eq:unisolvency4}
\ffi_{\tau} = {\boldsymbol 0} \qquad \text{for any $f \in \partial P$.}
\end{equation}
Finally, by definition of $\widetilde{\Sg}_h(P)$, there exists $\widetilde{\pp}_{k-3} \in [\Pk_{k-3}(P)]^3 \cap \ZZ(P)$ such that
\[
\int_P \dl \, \ffi \cdot \dl \, \ppsi \, {\rm d}P = 
\int_P  \widetilde{\pp}_{k-3} \cdot \ppsi \, {\rm d}P
\qquad
\text{for all $\ppsi \in \boldsymbol{\Psi}_0(P)$.}
\]
Therefore, being $\ffi \in \boldsymbol{\Psi}_0(P)$  (cf. \eqref{eq:unisolvency3} and \eqref{eq:unisolvency4}) we infer
\begin{equation*}
\label{eq:unisolvency6}
\begin{aligned}
\|\dl \, \ffi\|_{0, P}^2 & =
\int_P \dl \, \ffi \cdot \dl \, \ffi \, {\rm d}P = 
\int_P  \widetilde{\pp}_{k-3} \cdot \ffi \, {\rm d}P
\\ 
& = \int_P  \CC \, (\xx \wedge {\boldsymbol q}_{k-3}) \cdot \ffi \, {\rm d}P
\quad & \text{(characterization \eqref{eq:z_CC})}
\\
& =  \int_P  \xx \wedge {\boldsymbol q}_{k-3} \cdot  \CC \, \ffi \, {\rm d}P
\quad & \text{(integration by parts + \eqref{eq:unisolvency4})}
\end{aligned}
\end{equation*}
and thus, since $\mathbf{D^4}_{\Sg}(\ffi) = {\boldsymbol 0}$, we obtain
$
\|\dl \, \ffi\|_{0, P}^2 = 0
$.
Now the proof follows by the fact that 
$\|\dl \,\cdot\|_{0, P}$ is a norm on $\boldsymbol{\Psi}_0(P)$ (see Lemma 5.2 in \cite{girault-raviart:book} and \eqref{eq:norm_equivalence}).
\end{proof}

\begin{remark}
\label{rm:Sg_h}
A careful inspection of Theorem \ref{thm:wellposed} (see also Remark 5.1 in \cite{girault-raviart:book} and \cite{bendali-dominguez-gallic:1985}) reveals that the space \eqref{eq:Sg_h^P} admits the equivalent formulation
\begin{equation}
\label{eq:Sg_h^Pnew}
\begin{aligned}
\Sg_h(P) := \biggl\{  
&\ffi \in \boldsymbol{\Psi}(P) \quad \text{s.t.} \quad
 \ffi_{|\partial P}  \in {\boldsymbol S}_k(\partial P) \,, 
\quad
 (\CC \, \ffi)_{|\partial P}   \in [\widehat{\B}_k(\partial P)]^3 \,,
\\
& \qquad
\dl^2 \, \ffi  \in [\Pk_{k-1}(P)]^3 \cap \ZZ(P) \,,
\\
& \qquad
\dd \, \ffi = 0 \,,
\\
\biggl.
&  \qquad
\left(  \CC \, \ffi - \Pi_k^{\nabla, P} \CC \, \ffi  , \, \xx \wedge \widehat{\pp}_{k-1} \right)_P  = 0\quad 
\text{$\forall \, \widehat{\pp}_{k-1} \in  [\widehat{\Pk}_{k-1 \setminus k-3}(P)]^3$}
\biggr \} \,.
\end{aligned}
\end{equation}
\end{remark}

The global space $\Sg_h$ is defined by collecting the local spaces $\Sg_h(P)$, i.e.
\begin{equation}
\label{eq:Sg_h}
\Sg_h := \{\ffi \in \Sg(\Omega) \quad \text{s.t.} \quad \ffi_{|P} \in \Sg_h(P) \} \,.
\end{equation}
The global set  of DoFs is the global counterpart of $\mathbf{D}_{\Sg}$, in particular the choice of DoFs $\mathbf{D}_{\Sg}$ establishes the conforming property
$\CC \, \Sg_h \subseteq [H^1(\Omega)]^3$.
The dimension of $\Sg_h$ is given by
\begin{equation*}
\dim(\Sg_h) = 
3 \, L_V + (3 k-2)\, L_e + (3 \, \pi_{k-2, 2} -1) \, L_f + (3 \, \pi_{k-2, 3} - \pi_{k-1, 3} + 1) \,L_P \,.
\end{equation*}

\subsection{Scalar $H^1$-conforming VEM space}
\label{sub:W_h}

In the present section we briefly define the  $H^1$-conforming space $W_h$ in the virtual complex \eqref{eq:virtual_exact}. The space $W_h$ consists of low order nodal VEM  \cite{BBMR:2016}.

We first introduce the low order boundary space
\begin{equation}
\label{eq:B_h^n}
\B_1(f):= \{
v \in H^1(f) \quad \text{s.t.} \quad
v_{|\partial f} \in C(\partial f) \,, \quad
{v}_{|e} \in \Pk_1(e) \quad \text{for all $e \in \partial f$}\,, \quad
\Delta_f \, v = 0   
\}\,,
\end{equation}
and then we consider the VEM space on the polyhedron $P$
\begin{equation}
\label{eq:W_h^P}
W_h(P) := 
\{
v \in  H^1(P)
\, \, \, \text{s.t.} \, \, \, 
v_{|\partial P} \in C^0(\partial P)\,, \, \, \, 
v_{|f} \in \B_1(f) \quad \text{for any $f \in \partial P$,}  \, \, \, 
\Delta \, v = 0 
\}\,,
\end{equation}
with the associated set of DoFs:
\begin{itemize}
\item $\mathbf{D}_{W}$:  the values of $v$ at the vertexes of the polyhedron $P$.
\end{itemize}
It is straightforward to see that the dimension of $W_h(P)$ is given by
$
\dim(W_h(P)) = \ell_V
$.

The global space is obtained by collecting the local spaces
\begin{equation}
\label{eq:W_h}
W_h := 
\{
v \in H^1(\Omega) 
\quad \text{s.t.} \quad 
 v_{|P} \in W_h(P)  \quad \text{for all $P \in \Omega_h$} 
\}
\end{equation}
with the obvious  associated DoFs.
The dimension of $W_h$ thus is given by
\begin{equation*}
\dim(W_h) = L_V \,.
\end{equation*}


\section{The virtual elements de Rham sequence}
\label{sec:derham}

The aim of the present section is to show that the set of virtual spaces introduced in Section \ref{sec:spaces} realizes the exact sequence \eqref{eq:virtual_exact}.

\begin{theorem}
\label{thm:exact}
The sequence \eqref{eq:virtual_exact} constitutes an exact complex.
\end{theorem}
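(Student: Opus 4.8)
The plan is to verify exactness at each of the four nontrivial nodes of the sequence \eqref{eq:virtual_exact}, namely at $W_h$, $\Sg_h$, $\VV_h$, and $Q_h$, proceeding from right to left as the spaces were constructed. Since $\R \to W_h \to \Sg_h \to \VV_h \to Q_h \to 0$ is already a subcomplex of the exact continuous complex \eqref{eq:exact} (the differential operators are the genuine $\gr$, $\CC$, $\dd$, and the spaces are conforming), I automatically get that the image of each operator is \emph{contained} in the kernel of the next; the work is to prove the reverse inclusions and surjectivity. A convenient and robust strategy is to combine the inclusions with a dimension count: if I establish $\mathrm{im} \subseteq \ker$ at every node together with the Euler-characteristic identity
\[
\dim(W_h) - \dim(\Sg_h) + \dim(\VV_h) - \dim(Q_h) = \dim(\R) = 1,
\]
and if I verify exactness at enough nodes directly, then exactness at the remaining node follows by rank--nullity bookkeeping. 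All the requisite dimensions are already computed in Section \ref{sec:spaces}.

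I would carry out the four nodes as follows. At $Q_h$: surjectivity of $\dd \colon \VV_h \to Q_h$, i.e. $\dd\,\VV_h = Q_h$. The crucial fact $\dd\,\VV_h \subseteq Q_h$ is recorded just after \eqref{eq:Z_h}; surjectivity should follow from the local surjectivity built into $\VV_h(P)$ (the divergence DoFs $\mathbf{D^5}_{\VV}$ together with the constant-divergence freedom) combined with the polynomial exact sequence \eqref{eq:poly_exact}, plus the fact that $\VV_h \subseteq [H^1(\Omega)]^3$ can realize any prescribed local divergence in $\Pk_{k-1}(P)$ with globally single-valued traces. At $\VV_h$: I must show $\ker(\dd|_{\VV_h}) = \CC\,\Sg_h$, that is $\ZZ_h = \CC\,\Sg_h$. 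One inclusion, $\CC\,\Sg_h \subseteq \ZZ_h$, is exactly the design target stated at the start of Subsection \ref{sub:Sg_h} and should be confirmed using Remark \ref{rm:Sg_h} (where $\dd\,\ffi = 0$ and $\dl^2\ffi \in [\Pk_{k-1}(P)]^3 \cap \ZZ(P)$ force $\CC\,\ffi \in \VV_h(P)$, cf. the forward reference to Proposition \ref{prp:inclusion2}); the reverse inclusion requires, given $\vv \in \ZZ_h$, constructing a $\Sg$-conforming potential $\ffi \in \Sg_h$ with $\CC\,\ffi = \vv$. At $\Sg_h$: $\ker(\CC|_{\Sg_h}) = \gr\,W_h$, using that a curl-free field on a contractible $\Omega$ is a gradient and that its potential lies in the lowest-order nodal space $W_h$. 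At $W_h$: $\ker(\gr|_{W_h}) = \R$, which is immediate since $W_h \subseteq H^1(\Omega)$ and $\Omega$ is connected.

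The main obstacle I expect is the surjectivity step at $\VV_h$, namely producing, for an arbitrary $\vv \in \ZZ_h$, a genuine element $\ffi \in \Sg_h$ with $\CC\,\ffi = \vv$ that simultaneously respects all the defining constraints of $\Sg_h(P)$ (the biharmonic variational condition, the boundary membership $\ffi_{|\partial P} \in \boldsymbol{S}_k(\partial P)$ and $(\CC\,\ffi)_{|\partial P} \in [\widehat{\B}_k(\partial P)]^3$, and the enhancement) and, crucially, glues correctly across faces to land in the global $\Sg_h \subseteq \Sg(\Omega)$. The natural route is to exploit the careful DoF design flagged in Subsection \ref{sub:Sg_h}: the DoFs $\mathbf{D}_{\Sg}$ of $\ffi$ were chosen so that $\mathbf{D^1}$--$\mathbf{D^4}_{\Sg}$ literally coincide with $\mathbf{D^1}$--$\mathbf{D^4}_{\VV}$ applied to $\CC\,\ffi$, giving a direct correspondence between the curls of the Lagrange basis functions of $\Sg_h$ and the basis functions of $\VV_h$. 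I would therefore define $\ffi$ by prescribing its $\Sg$-DoFs to match those of the target $\vv$ (the first four blocks being determined by the DoFs of $\vv$, and $\mathbf{D^5}_{\Sg}$, the edge means of $\ffi\cdot\tf_e$, fixed by a global compatibility argument), invoke the well-posedness result Theorem \ref{thm:wellposed} (proved in the Appendix) to guarantee $\ffi$ exists locally, and check via the compatibility relations \eqref{eq:compatibility_CC_rr} and \eqref{eq:compatibility_rr_tf} that the construction is consistent across shared faces and edges. The local-to-global gluing and the verification that the prescribed data satisfy the compatibility condition \eqref{eq:dofs0} are the delicate points; everything else reduces to the dimension identities already available and to the classical contractible-domain facts for the $\gr$ and $\CC$ kernels.
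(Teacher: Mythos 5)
Your overall architecture is the same as the paper's: Theorem \ref{thm:exact} is proved there exactly node by node, via Propositions \ref{prp:inclusion1}, \ref{prp:inclusion2} and \ref{prp:inclusion3}, and you correctly single out the $\VV_h$ node, the DoF correspondence and Theorem \ref{thm:wellposed} as the crux. (Your Euler-characteristic fallback is also arithmetically sound: with the dimensions of Section \ref{sec:spaces} the alternating sum collapses to $L_V - L_e + L_f - L_P = 1$ by Euler's formula on the contractible mesh, so exactness at one node would indeed follow from the other three.) However, two genuine gaps remain. First, the claim that $\mathrm{im} \subseteq \ker$ is ``automatic'' from conformity is not correct: the kernel at each node is taken \emph{inside the next discrete space}, so you must prove $\gr\, W_h \subseteq \Sg_h$ and, much harder, $\CC\, \Sg_h \subseteq \VV_h$. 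You flag the latter and point to Remark \ref{rm:Sg_h}, but that remark does not by itself place $\CC\,\ffi$ in $\VV_h(P)$: the paper's proof of $\boldsymbol{(i1)}$ in Proposition \ref{prp:inclusion2} needs the chain of equivalences \eqref{eq:bih-sto}, i.e.\ converting the biharmonic condition into the Stokes condition of \eqref{eq:V_h^P} via the identity $\dl = -\CC\,\CC + \Gr\,\dd$, integration by parts with the boundary conditions, the characterization \eqref{eq:z_CC}, and the representation of every $\zz \in \ZZ_0(P)$ as $\CC\,\ppsi$ with $\ppsi \in \boldsymbol{\Psi}_0(P) \cap \ZZ(P)$ (Lemma 5.1 of Girault--Raviart). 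None of this appears in your proposal.

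Second, and more seriously, your surjectivity steps do not close. At the $\VV_h$ node you propose to prescribe $\mathbf{D^1}_{\Sg}$--$\mathbf{D^4}_{\Sg}$ from the DoFs of $\vv$ and to fix the edge means $\mathbf{D^5}_{\Sg}$ ``by a global compatibility argument''; but producing edge values whose weighted sums around every face reproduce the fluxes $\int_f \vv \cdot \nn_P^f$ (cf.\ \eqref{eq:unisolvency2a} and the compatibility \eqref{eq:dofs0}) is itself a nontrivial discrete cohomology problem --- essentially the exactness of a lowest-order edge--face complex --- which your sketch leaves entirely unsolved. The paper's key device, which you never invoke, is to pull back along the \emph{continuous} exact complex \eqref{eq:exact} and then interpolate: by Corollary 3.3 of Girault--Raviart there exists a global potential $\widetilde{\ffi} \in [H^2(\Omega)]^3 \cap \ZZ(\Omega)$ with $\CC\,\widetilde{\ffi} = \vv$, and defining $\ffi$ by $\mathbf{D}_{\Sg}(\ffi) = \mathbf{D}_{\Sg}(\widetilde{\ffi})$ makes all face and edge compatibilities automatic, after which $\mathbf{D}_{\VV}(\CC\,\ffi) = \mathbf{D}_{\VV}(\vv)$ is verified directly (the face normal moments being the only subtle check). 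The same device substantiates the two claims you assert without proof at the other nodes: in Proposition \ref{prp:inclusion3} one takes a continuous preimage $\widetilde{\vv} \in [H^1(\Omega)]^3$ with $\dd\,\widetilde{\vv} = q$ and interpolates its face fluxes and divergence moments --- this is what delivers your ``globally single-valued traces realizing any local divergence'' --- and in Proposition \ref{prp:inclusion1} the gradient potential $\widetilde{w}$ is \emph{not} itself in $W_h$ (its trace on the skeleton is continuous piecewise linear, but $\widetilde w$ need not be harmonic inside elements); it must be interpolated and the matching of $\mathbf{D^5}_{\Sg}$ checked edge by edge via the fundamental theorem of calculus. Without the continuous-potential-plus-DoF-interpolation step, the surjectivity half of your argument at each of the three nontrivial nodes remains a placeholder.
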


The theorem follows by Proposition \ref{prp:inclusion1}, Proposition \ref{prp:inclusion2} and Proposition \ref{prp:inclusion3}, here below, stating that the image of each operator in \eqref{eq:virtual_exact} coincides with the kernel of the following one.

\begin{proposition}
\label{prp:inclusion1}
Let $W_h$ and $\Sg_h$ be the spaces defined in \eqref{eq:W_h} and \eqref{eq:Sg_h} respectively. Then 
\[
\nabla\, W_h =  \{ \ffi \in \Sg_h \quad \text{s.t.} \quad \CC \, \ffi = {\bf 0} \quad \text{in $\Omega$}\} \,.
\]
\end{proposition}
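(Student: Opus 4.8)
The plan is to prove the two inclusions separately, exploiting the fact that both spaces are assembled from local pieces that already fit into the exact polynomial sequence \eqref{eq:poly_exact}. First I would establish the inclusion $\nabla W_h \subseteq \{\ffi \in \Sg_h : \CC\,\ffi = \mathbf{0}\}$. Given $w \in W_h$, clearly $\CC\,\nabla w = \mathbf{0}$ in $\Omega$, so the real content is to verify that $\nabla w \in \Sg_h$, i.e. that $\nabla w$ satisfies the defining conditions of $\Sg_h(P)$ on each $P$ together with the global $\Sg$-conformity. On each element, $w_{|P}$ is harmonic with piecewise-linear edge traces and harmonic face traces; I would check that $\nabla w$ lies in $\boldsymbol{\Psi}(P)$ (it is curl-free, and $\dd\,\nabla w = \Delta w = 0$ by definition of $W_h(P)$), that its tangential trace lies in $\boldsymbol{S}_k(\partial P)$, and that $\CC\,\nabla w = \mathbf{0}$ trivially satisfies the boundary and enhancement conditions on the curl. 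The biharmonic condition reduces to $\dl^2 \nabla w = \mathbf{0} \in [\Pk_{k-1}(P)]^3 \cap \ZZ(P)$, using the equivalent formulation \eqref{eq:Sg_h^Pnew}, which is immediate since $\nabla w$ is curl-free and divergence-free. Global conformity follows because $w \in H^1(\Omega)$ forces continuity of tangential traces of $\nabla w$ across faces.

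For the reverse inclusion, I would take $\ffi \in \Sg_h$ with $\CC\,\ffi = \mathbf{0}$ in $\Omega$ and produce $w \in W_h$ with $\nabla w = \ffi$. Since $\Omega$ is contractible and $\ffi \in \boldsymbol{\Sigma}(\Omega) \subseteq {\boldsymbol H}(\CC,\Omega)$ is curl-free, the continuous de Rham complex guarantees a potential $w \in H^1(\Omega)$, unique up to the constant, with $\nabla w = \ffi$. The task is then to show $w \in W_h$, i.e. $w_{|P} \in W_h(P)$ for each $P$. I would argue that $\Delta w = \dd\,\nabla w = \dd\,\ffi = 0$ on each $P$ (using $\dd\,\ffi = 0$ from the definition of $\Sg_h(P)$), and that the edge traces of $w$ are affine while the face traces are harmonic: the edge tangential derivative $\nabla w \cdot \tf_e = \ffi \cdot \tf_e$ is constant on each edge by the $\boldsymbol{S}_k(f)$ constraint $(\ssigma\cdot\tf_e)_{|e} \in \Pk_0(e)$, so $w_{|e} \in \Pk_1(e)$, and $\Delta_f w_{\tau} = \dd_f\,\ffi_{\tau} = 0$ gives the harmonic face trace, matching \eqref{eq:B_h^n} and \eqref{eq:W_h^P}.

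I would then close the argument with a dimension count to confirm exactness rather than mere inclusion, comparing $\dim(\nabla W_h) = \dim(W_h) - 1 = L_V - 1$ (the kernel of $\nabla$ on the connected domain being the constants) against the dimension of the curl-kernel inside $\Sg_h$, which can be read off from the DoFs $\mathbf{D}_{\Sg}$: the conditions $\CC\,\ffi = \mathbf{0}$ kill exactly the subsets $\mathbf{D^1}_{\Sg}$ through $\mathbf{D^4}_{\Sg}$, leaving only the edge-mean DoFs $\mathbf{D^5}_{\Sg}$ subject to one global compatibility constraint, and this count should match $L_V - 1$. The main obstacle I anticipate is the careful bookkeeping at the boundary: showing that the locally constructed potential $w$ actually glues to a single-valued global $H^1$ function and that its traces land precisely in the low-order nodal space $W_h$, which requires matching the constant tangential components $\ffi\cdot\tf_e$ (governed by $\mathbf{D^5}_{\Sg}$ and the cross-face compatibility in \eqref{eq:S}) with the vertex values that parametrize $W_h$. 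The interplay between the edge-mean DoFs of $\Sg_h$ and the vertex DoFs of $W_h$ is where the delicate consistency check lies.
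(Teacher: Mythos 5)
Your two-inclusion skeleton matches the paper's proof, and your verification of $\nabla\, W_h \subseteq \{\ffi \in \Sg_h : \CC\,\ffi = \mathbf{0}\}$ is essentially identical to the paper's (tangential trace identity, $\Delta_f\, w_{\tau} = 0$, constant edge tangential derivatives, trivial curl and biharmonic conditions). Where you genuinely diverge is the reverse inclusion. The paper never attempts to show that the continuous potential $\widetilde{w}$ with $\nabla\,\widetilde{w} = \ffi$ itself belongs to $W_h$: it only extracts the skeleton facts (continuity and piecewise linearity of $\widetilde{w}$ on the edges), defines the vertex interpolant $w \in W_h$, invokes the first inclusion to get $\nabla\, w \in \Sg_h$, and concludes $\nabla\, w = \ffi$ by unisolvence of the DoFs $\mathbf{D}_{\Sg}$ (Proposition \ref{prp:Sg_h}): the subsets $\mathbf{D^1}_{\Sg}$ through $\mathbf{D^4}_{\Sg}$ vanish for both functions since both curls are zero, and $\mathbf{D^5}_{\Sg}$ match by the fundamental theorem of calculus along each edge. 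You instead verify directly that $\widetilde{w}_{|P} \in W_h(P)$, which forces you to use the interior property $\dd\,\ffi = 0$ (hence $\Delta\,\widetilde{w} = 0$ in $P$). Be aware that this property is \emph{not} part of the primary definition \eqref{eq:Sg_h^P}; it appears only in the equivalent formulation \eqref{eq:Sg_h^Pnew} of Remark \ref{rm:Sg_h}, which the paper asserts without detailed proof. So your route is shorter and more transparent, but it leans on that remark, whereas the paper's interpolation-plus-unisolvence argument needs only boundary/skeleton data (the conditions $\dd_f\,\ffi_{\tau} = 0$ and $(\ffi\cdot\tf_e)_{|e} \in \Pk_0(e)$, which are explicit in \eqref{eq:Sf}) and never touches the interior characterization of the local space.

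One concrete flaw: your closing dimension count is both redundant and incorrect as sketched. Redundant, because once both inclusions are established the set equality follows — there is no residual "exactness versus mere inclusion" gap to close. Incorrect, because imposing $\CC\,\ffi = \mathbf{0}$ does not leave the $L_e$ edge-mean DoFs $\mathbf{D^5}_{\Sg}$ subject to a single global constraint: vanishing of the curl forces the circulation of the edge means around the boundary of \emph{every} face to vanish (this is exactly the computation \eqref{eq:unisolvency2a}, since $\int_f \CC\,\ffi\cdot\nn_P^f\,{\rm d}f = \sum_{e \in \partial f} |e|\,\mathbf{D^5}_{\Sg,e}(\ffi)\,\tf_e\cdot\tf_f^e$), i.e.\ one constraint per independent cycle in the face skeleton. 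On a contractible domain these constraints cut the space of edge assignments down to the discrete gradients of vertex values, of dimension $L_V - 1$ — which is far smaller than $L_e - 1$ in general. Either drop this step entirely or replace it by the correct cochain-level argument; as written the count would not match and would appear to contradict the equality you have already proved.
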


\begin{proof}
Essentially we need to prove that 
\begin{itemize}
\item [${\boldsymbol {(i1)}}$]: for every $w \in W_h$, $\nabla \, w \in \Sg_h$ and $\CC (\nabla \, w) = {\bf 0}$,
\item [${\boldsymbol {(i2)}}$]: for every $\ffi \in \Sg_h$ with $\CC \, \ffi = {\bf 0}$, there exists $w \in W_h$  such that $\nabla \, w = \ffi$.
\end{itemize}
For what concerns the inclusion ${\boldsymbol {(i1)}}$, every $w \in W_h$ clearly satisfies
\[
\nabla \, w \in [L^2(\Omega)]^3 
\quad \text{and} \quad
\CC (\nabla \, w) = {\bf 0} \in [H^1(\Omega)]^3\,.
\]
Therefore we need to verify that $(\nabla \, w)_{|P} \in \Sg_h(P)$ for  any $P \in \Omega_h$. 
Notic0e that the tangential component of $\nabla \, w$ verifies 
\begin{equation}
\label{eq:grad_tan}
(\nabla \, w)_{\tau} = \nabla_f \, w_{\tau} 
\qquad \text{ on each face $f \in \partial P$.} 
\end{equation}
From definition  \eqref{eq:B_h^n} and \eqref{eq:Sf}, 
 for any $f \in \partial P$ we infer
\begin{align*}
& \dd_f ( \nabla_f \, w_{\tau} ) =  0 
\qquad \text{in $f$,}
\qquad \qquad &\text{($\Delta_f \, w_{\tau} = 0$)}
\\
&  \rr_f ( \nabla_f \, w_{\tau} ) = 0 
\qquad \text{in $f$,}
\qquad \qquad &\text{(vector calculs identity)}
\\
& (\nabla \, w \cdot \tf_e)_{|e} = \frac{\partial w}{\partial \tf_e} \in \Pk_0(e) 
\qquad \text{$\forall \, e \in \partial f$,} 
\qquad \qquad &\text{($w_{|e} \in \Pk_1(e)$)}
\end{align*}
that, recalling \eqref{eq:grad_tan}, implies $(\nabla \, w)_{\tau} \in {\boldsymbol S}_k(f)$. 
Moreover $w \in C^0(\partial P)$ entails 
\[
[(\nabla \, w)_{f_1} \cdot \tf_e]_{|e} = [(\nabla \, w)_{f_2} \cdot \tf_e]_{|e} 
\quad \text{for any $e \subseteq \partial f_1 \cap f_2$,}
\]
and thus (cf. definition 
\eqref{eq:S})
\begin{equation}
\label{eq:grad_sk}
(\nabla \, w)_{|\partial P} \in {\boldsymbol S}_k(\partial P) \,.
\end{equation} 
Furthermore definition \eqref{eq:W_h^P} implies
\begin{equation}
\label{eq:grad_sg}
\begin{aligned}
&\int_{\partial P} \nabla \, w \cdot \nn_P \,{\rm d}f =
\int_P \Delta \, w \, {\rm d}P = 0 \,,
&
\qquad &\text{(div. thereom + $\Delta \, w = 0$)}
\\
&\CC ( \nabla \, w) = {\bf 0} 
&\text{in $\overline{P}$,} 
\qquad &\text{(vector calculus identity)}
\\
& \dl (\nabla \, w) = \Gr (\Delta \, w) = {\bf 0} 
&\text{in $P$.}
\qquad &\text{($\Delta \, w = 0$)}
\end{aligned}
\end{equation}
Collecting \eqref{eq:grad_sk} and \eqref{eq:grad_sg} in definition \eqref{eq:Sg_h^P}, we easily obtain $\boldsymbol{(i1)}$.

We prove now the property $\boldsymbol{(i2)}$. 
Consider $\ffi \in \Sg_h$ such that $\CC \, \ffi = {\bf 0}$.
Since \eqref{eq:exact} is an exact sequence, there exists unique (up to constant) $\widetilde{w} \in H^1(\Omega)$ such that $\nabla \, \widetilde{w} =\ffi$.
Therefore for any face $f$ in the decomposition $\Omega_h$, 
the tangential component of $\nabla \, \widetilde{w}$ satisfies 
(cf. definition \eqref{eq:Sf})
\[
(\nabla \, \widetilde{w})_{\tau} = 
\nabla_f \, \widetilde{w}_{\tau} =  \ffi_{\tau} \in [L^2(f)]^2 \quad \text{on $f$.}
\]
Hence on each face $f$ the function $\widetilde{w}$ fulfils
\begin{equation}
\label{eq:gradwt}
\left \{
\begin{aligned}
& (\nabla \, \widetilde{w} \cdot \tf_e)_{|e} = (\ffi \cdot \tf_e)_{|e} \in \Pk_0(e) \quad 
&\text{on any $e \in \partial f$,}
\\
& \widetilde{w}_{\tau} \in H^1(f) 
\quad 
&\text{in $f$.}
\end{aligned}
\right .
\end{equation}
From \eqref{eq:gradwt} it follows that $\widetilde{w}$ restricted to the mesh skeleton is continuous and piecewise linear. 
Thus the function $\widetilde{w}$ is well defined (single valued) on the vertexes of the decomposition $\Omega_h$ and $\mathbf{D}_{\VV}(\widetilde{w})$ makes sense.
Let now
 $w \in W_h$ be the interpolant function of $\widetilde{w}$ in the sense of DoFs, i.e. the function uniquely determined by
\begin{equation}
\label{eq:gradwt1}
\mathbf{D}_{\VV}(w) = \mathbf{D}_{\VV}(\widetilde{w}) \,.
\end{equation}
Inclusion $\boldsymbol{(i1)}$ guarantees that $\nabla \, w \in \Sg_h$. 
Hence, by Proposition \ref{prp:Sg_h}, $w$ realizes $\boldsymbol{(i2)}$
if and only if
$\mathbf{D}_{\Sg}(\nabla \, w) = \mathbf{D}_{\Sg}(\ffi)$.
Being $\CC (\nabla \, w) = \CC \, \ffi = {\bf 0}$, this reduce to verify that 
\[
\mathbf{D^5}_{\Sg}(\nabla \, w) = \mathbf{D^5}_{\Sg}(\ffi) \,.
\] 
For any edge $e$ in the decomposition $\Omega_h$, we denote with $\nu_2$ and $\nu_1$ the two endpoints of  $e$, with $\tf_e$ pointing from $\nu_1$ to $\nu_2$. Therefore, from \eqref{eq:gradwt1} and \eqref{eq:gradwt},  we infer
\[
\begin{split}
\mathbf{D^5}_{\Sg, e}(\nabla \, w) &= 
\frac{1}{|e|} \int_e \nabla \, w \cdot \tf_e \,{\rm d}s =
\mathbf{D}_{\VV, \nu_2}(w) - \mathbf{D}_{\VV, \nu_1}(w) =
\mathbf{D}_{\VV, \nu_2}(\widetilde{w}) - \mathbf{D}_{\VV, \nu_1}(\widetilde{w})
\\
&=
 \frac{1}{|e|} \int_e \nabla \, \widetilde{w} \cdot \tf_e \,{\rm d}s 
= \frac{1}{|e|} \int_e \ffi \cdot \tf_e \,{\rm d}s 
= \mathbf{D^5}_{\Sg, e}(\ffi) \,,
\end{split}
\]
that concludes the proof. 
\end{proof}

\begin{proposition}
\label{prp:inclusion2}
Let $\Sg_h$ and $\ZZ_h$ be the spaces defined in \eqref{eq:Sg_h} and \eqref{eq:Z_h} respectively. Then
\[
\CC \, \Sg_h = \ZZ_h \,.
\]
\end{proposition}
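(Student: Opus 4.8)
The plan is to prove the two inclusions $\CC\,\Sg_h \subseteq \ZZ_h$ and $\ZZ_h \subseteq \CC\,\Sg_h$ separately.

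For the inclusion $\CC\,\Sg_h \subseteq \ZZ_h$ I would fix $\ffi \in \Sg_h$, set $\vv := \CC\,\ffi$, and check that $\vv$ satisfies all the defining conditions of $\VV_h(P)$ in \eqref{eq:V_h^P} on each $P$, together with $\dd\,\vv = 0$. Since $\ffi \in \Sg(\Omega)$ we have $\vv \in [H^1(\Omega)]^3$ globally, while $\dd\,\vv = \dd\,\CC\,\ffi = 0$ gives both the divergence condition $\dd\,\vv \in \Pk_{k-1}(P)$ and the membership in $\ZZ_h$. The boundary condition $\vv_{|\partial P} \in [\widehat{\B}_k(\partial P)]^3$ and the enhancement constraint are immediate, being literally the corresponding lines in the equivalent definition \eqref{eq:Sg_h^Pnew} rewritten through $\vv = \CC\,\ffi$. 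The only genuine computation is the Laplacian condition. Using \eqref{eq:Sg_h^Pnew} we have $\dd\,\ffi = 0$, hence $\dl\,\ffi = -\CC\,\CC\,\ffi = -\CC\,\vv$, and $\dl^2\,\ffi = \widetilde{\pp}_{k-1} \in [\Pk_{k-1}(P)]^3 \cap \ZZ(P)$. Commuting $\CC$ with $\dl$ then yields $\CC\,\dl\,\vv = -\dl^2\,\ffi = -\widetilde{\pp}_{k-1}$; writing $\widetilde{\pp}_{k-1} = \CC(\xx \wedge {\boldsymbol q}_{k-1})$ through \eqref{eq:z_CC} at the shifted index gives $\CC(\dl\,\vv + \xx \wedge {\boldsymbol q}_{k-1}) = {\bf 0}$, so by exactness of the continuous de~Rham complex on the contractible $P$ there is a scalar $\psi$ with $\dl\,\vv + \xx \wedge {\boldsymbol q}_{k-1} = \nabla\psi$. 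Choosing $s := -\psi$ normalized to $L^2_0(P)$, I obtain $\dl\,\vv + \nabla s = \xx \wedge (-{\boldsymbol q}_{k-1}) \in \xx \wedge [\Pk_{k-1}(P)]^3$, which is exactly the structural requirement in \eqref{eq:V_h^P}. Thus $\vv \in \ZZ_h$.

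For the reverse inclusion $\ZZ_h \subseteq \CC\,\Sg_h$ I would fix $\vv \in \ZZ_h$. By \eqref{eq:kernel_inclusion} we have $\vv \in [H^1(\Omega)]^3$ with $\dd\,\vv = 0$, so exactness of \eqref{eq:exact} furnishes some $\widetilde{\ffi} \in \Sg(\Omega)$ with $\CC\,\widetilde{\ffi} = \vv$. I then define $\ffi \in \Sg_h$ through its degrees of freedom (Proposition \ref{prp:Sg_h}): I set $\mathbf{D^1}_{\Sg}(\ffi),\dots,\mathbf{D^4}_{\Sg}(\ffi)$, which are moments of $\CC\,\ffi$, equal to the corresponding vertex values, edge values, face and volume moments of the given $\vv$, and I set the edge means $\mathbf{D^5}_{\Sg}(\ffi)$ equal to $\tfrac{1}{|e|}\int_e \widetilde{\ffi}\cdot\tf_e\,{\rm d}s$. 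Unisolvence determines $\ffi$ uniquely, and the prescribed values are globally consistent since they are read off globally defined fields. To prove $\CC\,\ffi = \vv$, I note that by the first inclusion $\CC\,\ffi \in \ZZ_h \subseteq \VV_h$, so by unisolvence of $\VV_h(P)$ (Proposition \ref{prp:V_h^E_dofs}) it suffices to verify $\mathbf{D}_{\VV}(\CC\,\ffi) = \mathbf{D}_{\VV}(\vv)$. The vertex, edge, tangential-face and volume moments agree by construction; the divergence moments vanish on both sides because $\dd\,\CC\,\ffi = 0 = \dd\,\vv$. The only datum not directly prescribed is the constant normal face moment $\int_f \CC\,\ffi\cdot\nn_P^f\,{\rm d}f$, which I recover from the identity \eqref{eq:unisolvency2a}, namely $\int_f \CC\,\ffi\cdot\nn_P^f\,{\rm d}f = \sum_{e\in\partial f}|e|\,\mathbf{D^5}_{\Sg,e}(\ffi)\,\tf_e\cdot\tf_f^e$, together with the identical relation for $\vv = \CC\,\widetilde{\ffi}$ coming from \eqref{eq:compatibility_CC_rr}--\eqref{eq:compatibility_rr_tf}; since $\mathbf{D^5}_{\Sg}(\ffi)$ was chosen as the edge means of $\widetilde{\ffi}$, the two integrals coincide, and $\CC\,\ffi = \vv$ follows.

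The hard part will be the Laplacian verification in the first inclusion: translating the biharmonic structure $\dl^2\,\ffi \in [\Pk_{k-1}(P)]^3\cap\ZZ(P)$ of $\ffi$ into the Stokes structure $\dl\,\vv + \nabla s \in \xx\wedge[\Pk_{k-1}(P)]^3$ of $\vv = \CC\,\ffi$. This is precisely the ``curl of the biharmonic solution solves the Stokes problem'' correspondence anticipated after \eqref{eq:Sg_h^P}, and it relies on the commutation of $\CC$ with $\dl$, the identity $\dl\,\ffi = -\CC\,\vv$ valid because $\dd\,\ffi = 0$, and the polynomial exactness \eqref{eq:z_CC}, \eqref{eq:poly_exact}. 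The secondary delicate point is the constant normal moment in the reverse inclusion, where the extra degrees of freedom $\mathbf{D^5}_{\Sg}$ must be tuned through the boundary identity \eqref{eq:unisolvency2a} to recover the one piece of $\vv$ invisible to the curl moments.
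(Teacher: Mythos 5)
Your overall architecture (two inclusions, interpolation in the $\mathbf{D}_{\Sg}$ degrees of freedom, comparison of $\mathbf{D}_{\VV}$ values with the constant normal face moment recovered through \eqref{eq:unisolvency2a}) is the paper's, but there is one genuine gap, in $\boldsymbol{(i2)}$: you obtain the potential $\widetilde{\ffi}$ merely from exactness of \eqref{eq:exact}, which only gives $\widetilde{\ffi} \in \Sg(\Omega)$, i.e.\ an $[L^2(\Omega)]^3$ field whose curl is in $[H^1(\Omega)]^3$. For such a field the data you prescribe are not defined: the edge means $\frac{1}{|e|}\int_e \widetilde{\ffi}\cdot\tf_e\,{\rm d}s$ require a trace of $\widetilde{\ffi}$ on a one-dimensional edge, which in 3D needs roughly $H^{1+\epsilon}$ regularity, while the tangential trace of an ${\boldsymbol H}(\CC)$-type field lives only in $H^{-1/2}(\partial P)$ and cannot be restricted to edges; the same problem affects the chain $\int_f \vv\cdot\nn_P^f\,{\rm d}f = \int_{\partial f}\widetilde{\ffi}\cdot\tf_f\,{\rm d}s$ that you invoke on the $\vv$ side via \eqref{eq:compatibility_CC_rr}--\eqref{eq:compatibility_rr_tf}. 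The paper closes precisely this hole by taking $\widetilde{\ffi}$ from Corollary~3.3 of Girault--Raviart, which yields a \emph{divergence-free potential in $[H^2(\Omega)]^3$} with $\CC\,\widetilde{\ffi} = \vv$; since $H^2(\Omega)\subset C^0(\overline{\Omega})$ in three dimensions, all edge and face integrals then make sense, and with this single replacement your argument goes through verbatim (note that prescribing $\mathbf{D^1}_{\Sg},\dots,\mathbf{D^4}_{\Sg}$ from $\vv=\CC\,\widetilde{\ffi}$ and $\mathbf{D^5}_{\Sg}$ from $\widetilde{\ffi}$ is exactly the paper's condition $\mathbf{D}_{\Sg}(\ffi)=\mathbf{D}_{\Sg}(\widetilde{\ffi})$).

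For $\boldsymbol{(i1)}$ you take a route genuinely different from the paper's, and it is essentially sound. The paper stays with the variational definition \eqref{eq:Sg_h^P}: it integrates by parts against $\ppsi\in\boldsymbol{\Psi}_0(P)\cap\ZZ(P)$, uses $\dl = -\CC\,\CC + \Gr\,\dd$, and then the surjectivity of $\CC$ from $\boldsymbol{\Psi}_0(P)\cap\ZZ(P)$ onto $\ZZ_0(P)$ (Lemma~5.1 of Girault--Raviart) to land on the weak Stokes problem $\int_P \Gr(\CC\,\ffi):\Gr\,\zz\,{\rm d}P = \int_P (\xx\wedge\pp_{k-1})\cdot\zz\,{\rm d}P$ for all $\zz\in\ZZ_0(P)$, which is the chain \eqref{eq:bih-sto}. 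You instead work in strong form through the equivalent characterization \eqref{eq:Sg_h^Pnew}: $\dd\,\ffi=0$ gives $\dl\,\ffi=-\CC\,\vv$, commuting $\CC$ with $\dl$ gives $\CC\,\dl\,\vv=-\dl^2\ffi$, and the index-shifted \eqref{eq:z_CC} plus de~Rham exactness on the contractible $P$ manufactures the pressure. This is more direct, but be aware of two dependencies it creates: (a) it rests entirely on Remark~\ref{rm:Sg_h}, which the paper asserts only by ``careful inspection of Theorem~\ref{thm:wellposed}'', so the real work of converting the variational constraint into $\dl^2\ffi\in[\Pk_{k-1}(P)]^3\cap\ZZ(P)$ and $\dd\,\ffi=0$ --- which is what the paper's manipulation \eqref{eq:bih-sto} performs --- is shifted to an unproved remark; (b) to conclude that $s$ can be taken in $L^2_0(P)$, as \eqref{eq:V_h^P} requires, from $\nabla s = \dl\,\vv + \xx\wedge{\boldsymbol q}_{k-1}$ with $\dl\,\vv$ only in $[H^{-1}(P)]^3$, you should say explicitly that a distribution whose gradient lies in $H^{-1}$ belongs to $L^2$ (the standard Ne\v{c}as-type lemma); the exactness of the de~Rham complex alone gives you $s$ only as a distribution.
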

\begin{proof}
The proof follows by showing the following points:
\begin{itemize}
\item [${\boldsymbol {(i1)}}$]: for every $\ffi \in \Sg_h$, $\CC \, \ffi \in \ZZ_h$,
\item [${\boldsymbol {(i2)}}$]: for every $\vv \in \ZZ_h$ there exists $\ffi \in \Sg_h$  such that $\CC \, \ffi = \vv$.
\end{itemize}
Let us analyse the  inclusion ${\boldsymbol {(i1)}}$. 
Let $\ffi \in \Sg_h$, clearly
$\CC \, \ffi \in \ZZ(\Omega)$.
Therefore we need to verify that $(\CC \, \ffi)_{|P} \in \VV_h(P)$ for any $P \in \Omega_h$. 
It is evident that the second and the last line in definition \eqref{eq:Sg_h^P} 
correspond respectively to the $\CC$ version of the first and last line in definition \eqref{eq:V_h^P}.
Hence it remains to show that  $\CC \, \ffi$ is the velocity solution of the Stokes problem associated with definition \eqref{eq:V_h^P} on each element $P$.
A careful inspection of the biharmonic problem in definition \eqref{eq:Sg_h^P}, imply that the following are equivalent
\[
\begin{aligned}
&\int_P \dl \, \ffi \cdot \dl \, \ppsi \, {\rm d}P 
= \int_P \widetilde{\pp}_{k-1}   \cdot  \ppsi \, {\rm d}P 
\quad &\text{$\forall\ppsi \in \boldsymbol{\Psi}_0(P)$,}
\quad &\text{(by definition \eqref{eq:Sg_h^P})}
\\
&\int_P \dl \, \ffi \cdot \dl \, \ppsi \, {\rm d}P 
= \int_P \CC (\xx \wedge \pp_{k-1})   \cdot  \ppsi \, {\rm d}P 
\quad &\text{$\forall \ppsi\in \boldsymbol{\Psi}_0(P)$,}
\quad &\text{(characterization \eqref{eq:z_CC})}
\\
&\int_P \dl \, \ffi \cdot \dl \, \ppsi \, {\rm d}P 
= \int_P  (\xx \wedge \pp_{k-1})   \cdot \CC \, \ppsi \, {\rm d}P 
\quad &\text{$\forall \ppsi \in \boldsymbol{\Psi}_0(P)$.}
\quad &\text{(int. by parts + b.c)}
\end{aligned}
\]
In particular the last equation is still valid considering all $\ppsi \in \boldsymbol{\Psi}_0(P)\cap \ZZ(P)$. 
Therefore, using the identity $\dl = -\CC \, \CC + \Gr \, \dd$  and an integration by parts (coupled with the homogeneous boundary condition $\CC \, \ppsi = 0$ on $\partial P$), 
it can be proved that the following are equivalent 
\begin{equation}
\label{eq:bih-sto}
\begin{aligned}
&\int_P \dl \, \ffi \cdot \dl \, \ppsi \, {\rm d}P 
= \int_P  (\xx \wedge \pp_{k-1})   \cdot \CC \, \ppsi \, {\rm d}P 
\quad &\text{$\forall \ppsi \in \boldsymbol{\Psi}_0(P)\cap \ZZ(P)$,}
\quad &
\\
&\int_P \dl \, \ffi \cdot (- \CC \, \CC \, \ppsi) \, {\rm d}P 
= \int_P  (\xx \wedge \pp_{k-1})    \cdot \CC \, \ppsi \, {\rm d}P 
\quad &\text{$\forall \ppsi \in \boldsymbol{\Psi}_0(P) \cap \ZZ(P)$,}
\quad &
\\
&\int_P -\dl (\CC \, \ffi) \cdot \CC \, \ppsi \, {\rm d}P 
= \int_P  (\xx \wedge \pp_{k-1})   \cdot \CC \, \ppsi \, {\rm d}P 
\quad &\text{$\forall \ppsi \in \boldsymbol{\Psi}_0(P) \cap \ZZ(P)$.}
\quad &
\end{aligned}
\end{equation}
Exploiting Lemma 5.1 in \cite{girault-raviart:book}, for every $\zz \in \ZZ_0(P)$ there exists $\ppsi \in \boldsymbol{\Psi}_0(P) \cap \ZZ(P)$ such that $\zz = \CC \, \ppsi$.
Therefore the last equation in \eqref{eq:bih-sto} is equivalent to
\[
\int_P \Gr (\CC \, \ffi) : \Gr \zz \, {\rm d}P 
= \int_P  (\xx \wedge \pp_{k-1})   \cdot \zz \, {\rm d}P 
\quad \text{for all $\zz \in \ZZ_0(P)$,}
\]
and therefore $\vv = \CC \, \ffi$ is the velocity solution of a Stokes problem as in Definition \eqref{eq:V_h^P}.
That concludes the proof for $\boldsymbol{(i1)}$.

We focus now on $\boldsymbol{(i2)}$. 
Let $\vv \in \ZZ_h \subseteq \ZZ$, then from Corollary 3.3 in \cite{girault-raviart:book}
there exists $\widetilde{\ffi} \in [H^2(\Omega)]^3 \cap \ZZ(\Omega)$, such that $\CC \, \widetilde{\ffi} = \vv$.
Notice that, being $\widetilde{\ffi} \in [H^2(\Omega)]^3$ and $\CC \, \widetilde{\ffi}  \in  \VV_h(P)$ for any $P$ in $\Omega_h$,
it makes sense to compute
$\mathbf{D}_{\Sg}(\widetilde{\ffi})$.

Let us consider  the interpolant $\ffi \in \Sg_h$ of $\widetilde{\ffi}$ in the sense of DoFs, i.e. the function uniquely determined by (cf. Proposition \ref{prp:Sg_h})
\begin{equation}
\label{eq:dofsg}
\mathbf{D}_{\Sg}(\ffi) = \mathbf{D}_{\Sg}(\widetilde{\ffi}) \,.
\end{equation}
Property $\boldsymbol{(i1)}$ ensures $\CC \, \ffi \in \ZZ_h$. 
Therefore, employing Proposition \ref{prp:V_h^E_dofs}, $\ffi$ realizes $\boldsymbol{(i2)}$ if and only if $\mathbf{D}_{\VV}(\CC \,\ffi) = \mathbf{D}_{\VV}(\vv)$.
Is it straightforward to check that
\[
\mathbf{D^5}_{\VV}(\CC \,\ffi) = \mathbf{D^5}_{\VV}(\vv) = {\bf 0} \,,
\]
\[
\mathbf{D^{\boldsymbol i}}_{\VV}(\CC \,\ffi) =
\mathbf{D^{\boldsymbol i}}_{\Sg}(\ffi) =
\mathbf{D^{\boldsymbol i}}_{\Sg}(\widetilde{\ffi}) =
\mathbf{D^{\boldsymbol i}}_{\VV}(\vv)
\quad \text{for $\boldsymbol{i=1,2,3, 4}$,}
\]
except the face moment (that is slightly more subtle)
\[
\int_f \CC \,\ffi \cdot \nn_P^f \, {\rm d}f 
\quad \text{and} \quad
\int_f \vv \cdot \nn_P^f \, {\rm d}f 
\qquad \text{for any face $f$.}
\]
In order to show that the two quantities above are equal we exploit  the same computations in \eqref{eq:unisolvency2a} and \eqref{eq:dofsg} 
\[
\begin{split}
\int_f \CC \, \ffi \cdot \nn_P^f \,{\rm d}f &=
\sum_{e \in \partial f} |e| \, \mathbf{D^5}_{\Sg, e}(\ffi) \, \tf_e \cdot \tf_f^e 
=
\sum_{e \in \partial f} |e| \, \mathbf{D^5}_{\Sg, e}(\widetilde{\ffi}) \, \tf_e \cdot \tf_f^e 
\\
&=
\int_f \CC \,\widetilde{\ffi} \cdot \nn_P^f \,{\rm d}f 
= \int_f \vv \cdot \nn_P^f \,{\rm d}f \,.
\end{split}
\]
This ends the proof.
\end{proof}

\begin{proposition}
\label{prp:inclusion3}
Let $\VV_h$ and $Q_h$ be the spaces defined in \eqref{eq:V_h} and \eqref{eq:Q_h} respectively. Then 
\[
\dd \, \VV_h =  Q_h \,.
\]
\end{proposition}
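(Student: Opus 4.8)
The goal is to show $\dd\,\VV_h = Q_h$, i.e. that the divergence operator maps $\VV_h$ onto $Q_h$. Since the excerpt already records the inclusion $\dd\,\VV_h \subseteq Q_h$ (stated right before equation \eqref{eq:kernel_inclusion}), the only substantive thing left is surjectivity. Let me think about how to establish that.

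$\VV_h(P)$ is defined locally, with elements $\vv$ satisfying $\dd\,\vv \in \Pk_{k-1}(P)$, and the DoFs $\mathbf{D^5}_{\VV}$ capture the moments $\int_P (\dd\,\vv)\widehat{p}_{k-1}$ for $\widehat{p}_{k-1} \in \widehat{\Pk}_{k-1\setminus 0}(P)$ — that is, the nonconstant part of $\dd\,\vv$. The constant part of $\dd\,\vv$ on $P$ is determined by the boundary DoFs via the divergence theorem: $\int_P \dd\,\vv\,dP = \int_{\partial P}\vv\cdot\nn_P\,df$, which is computable from the face DoFs $\mathbf{D^3}_{\VV}$ (normal component). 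So all moments of $\dd\,\vv$ against $\Pk_{k-1}(P)$ are controlled by the DoFs.

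**Surjectivity strategy:**

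Given arbitrary $q \in Q_h$, with $q_{|P} \in \Pk_{k-1}(P)$ on each element, I want to produce $\vv \in \VV_h$ with $\dd\,\vv = q$. The natural approach is local: on each $P$, I want to find $\vv_P \in \VV_h(P)$ with $\dd\,\vv_P = q_{|P}$, and then assemble globally. The cleanest route is a dimension-counting / unisolvence argument. Since $\dd\,\vv$ on $P$ is a polynomial in $\Pk_{k-1}(P)$ and its moments against all of $\Pk_{k-1}(P)$ are exactly the DoFs $\mathbf{D^5}_{\VV}$ together with the (boundary-determined) constant moment, I can prescribe these freely. More precisely, I would argue that the map $\dd: \VV_h(P) \to \Pk_{k-1}(P)$ is surjective by exhibiting, for each basis polynomial of $\Pk_{k-1}(P)$, a preimage: set all boundary DoFs and interior $\mathbf{D^4}_{\VV}$ DoFs to zero and prescribe the divergence moments appropriately. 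The one constant-mode subtlety is that $\int_P \dd\,\vv\,dP$ equals the net normal flux through $\partial P$, so to realize a divergence with nonzero mean I must activate a boundary normal DoF; this is always possible since the face normal-moment DoFs are independent.

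**Main obstacle — global assembly and compatibility:**

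The step I expect to be the real obstacle is the global assembly. Working element-by-element is not enough, because a global $\vv \in \VV_h$ must be continuous (lie in $[H^1(\Omega)]^3$), so the boundary DoFs shared between adjacent elements must agree. One cannot independently choose normal fluxes on each element if the prescribed divergences have nonzero means, since those means are coupled through the shared face fluxes. The correct way to handle this is to invoke the kernel-inclusion machinery already in place: the cleanest argument is a dimension count combined with the already-established exactness facts, or alternatively to note that the global statement follows from the other two propositions in the complex. Concretely, since Proposition \ref{prp:inclusion1} and Proposition \ref{prp:inclusion2} are proved, and the dimensions of all four spaces are explicitly computed, surjectivity of $\dd$ onto $Q_h$ is equivalent to the rank-nullity bookkeeping $\dim\VV_h - \dim\ZZ_h = \dim Q_h$, which can be checked directly from \eqref{eq:dim_V_h}, \eqref{eq:dim_Q_h} and the formula for $\dim(\ZZ_h)$. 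Thus the plan is:

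\textbf{Step 1.} Recall the already-stated inclusion $\dd\,\VV_h \subseteq Q_h$, so it remains to prove surjectivity.

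\textbf{Step 2.} Establish local surjectivity: for each $P$ and each $q_P \in \Pk_{k-1}(P)$, construct $\vv_P \in \VV_h(P)$ with $\dd\,\vv_P = q_P$, using that the divergence moments against $\widehat{\Pk}_{k-1\setminus 0}(P)$ are free DoFs ($\mathbf{D^5}_{\VV}$) and the mean divergence is realized by a boundary normal flux.

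\textbf{Step 3.} For the global statement, rather than patch local solutions (which requires matching interface fluxes), use the dimension identity
\[
\dim(\VV_h) - \dim(\ZZ_h) = \pi_{k-1,3}\,L_P = \dim(Q_h),
\]
which follows by subtracting the formula for $\dim(\ZZ_h)$ from \eqref{eq:dim_V_h} and comparing with \eqref{eq:dim_Q_h}. Since $\ZZ_h$ is by definition the kernel of $\dd$ acting into $Q_h$ and $\dd\,\VV_h \subseteq Q_h$, the rank-nullity theorem gives $\dim(\dd\,\VV_h) = \dim(Q_h)$, whence $\dd\,\VV_h = Q_h$.

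The delicate point worth spelling out is precisely Step 3's reliance on the kernel definition \eqref{eq:Z_h}: $\ZZ_h = \ker(\dd|_{\VV_h})$ holds because $\dd\,\vv \in Q_h$ means $\int_\Omega (\dd\,\vv)\,q = 0$ for all $q \in Q_h$ forces $\dd\,\vv = 0$. This is where the inclusion $\dd\,\VV_h\subseteq Q_h$ is essential, and it is the hinge that converts the combinatorial dimension count into the desired exactness.
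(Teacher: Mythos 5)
Your Step 1 and the local analysis in Step 2 are sound, but the hinge of your argument (Step 3) is circular. The formula for $\dim(\ZZ_h)$ that you subtract from \eqref{eq:dim_V_h} is only \emph{asserted} in the paper (``explicit computations yield\dots''), and it is logically equivalent to the very statement you are proving: by the same rank--nullity identity you invoke, $\dim(\ZZ_h) = \dim(\VV_h) - \pi_{k-1,3}\,L_P$ holds if and only if the $\pi_{k-1,3}$ divergence constraints per element are globally independent, i.e.\ if and only if $\dd\,\VV_h = Q_h$. The difficulty you correctly identified --- that the mean-divergence constraint on each element couples the shared face fluxes, so the $L_P$ constraints $\int_P \dd\,\vv\,{\rm d}P = \int_{\partial P}\vv\cdot\nn_P\,{\rm d}f$ might conceivably be dependent --- does not disappear in the dimension count; it is hidden inside the unproved formula for $\dim(\ZZ_h)$. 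To make your route self-contained you would have to prove that independence directly (e.g.\ by a connectivity argument on the dual graph of $\Omega_h$: a vanishing linear combination $\sum_P c_P \int_{\partial P}\vv\cdot\nn_P$ forces $c_{P_1}=c_{P_2}$ across each interior face and $c_P=0$ at boundary faces), which you did not do.

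The paper sidesteps all of this with a short global construction that resolves your ``main obstacle'' at a stroke: given $q \in Q_h$, exactness of the continuous complex \eqref{eq:exact} provides a single global field $\widetilde{\vv} \in [H^1(\Omega)]^3$ with $\dd\,\widetilde{\vv} = q$; one then defines $\vv \in \VV_h$ by interpolation of the relevant DoFs of $\widetilde{\vv}$ (all DoFs zero except the face normal moments $\int_f \vv\cdot\nn_P^f\,{\rm d}f = \int_f \widetilde{\vv}\cdot\nn_P^f\,{\rm d}f$ in $\mathbf{D^3}_{\VV}$ and $\mathbf{D^5}_{\VV}(\vv)=\mathbf{D^5}_{\VV}(\widetilde{\vv})$). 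Because $\widetilde{\vv}$ is one continuous function, its face normal moments are automatically single-valued across interfaces, so no flux-matching issue arises; then \eqref{eq:div1}--\eqref{eq:div2} show $\mathbf{D}_Q(\dd\,\vv)=\mathbf{D}_Q(q)$, whence $\dd\,\vv = q$ since both lie in $\Pk_{k-1}(P)$ elementwise. I recommend you either adopt this interpolation argument or supply the missing independence proof for the flux constraints; as written, your proposal assumes what it sets out to show.
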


\begin{proof}
We follow same strategy adopted in the previous propositions and show that
\begin{itemize}
\item [${\boldsymbol {(i1)}}$]: for every $\vv \in \VV_h$, $\dd \, \vv \in Q_h$,
\item [${\boldsymbol {(i2)}}$]: for every $q \in Q_h$ there exists $\vv \in \VV_h$  such that $\dd \, \vv = q$.
\end{itemize}
The inclusion ${\boldsymbol {(i1)}}$ is  trivial.
Regarding the point ${\boldsymbol {(i2)}}$, since \eqref{eq:exact} is an exact sequence, 
for any $q \in Q_h$ there exists $\widetilde{\vv} \in [H^1(\Omega)]^3$ such that $\dd \,\widetilde{\vv} = q$.
Now let $\vv \in \VV_h$  the function  uniquely determined by (cf. Proposition \ref{prp:V_h^E_dofs})
\begin{equation}
\label{eq:dofsv}
\begin{gathered}
\mathbf{D^1}_{\VV}(\vv) = \mathbf{D^2}_{\VV}(\vv) =  \mathbf{D^4}_{\VV}(\vv) = {\bf 0} \,,
\\
\mathbf{D^3}_{\VV}(\vv) = {\bf 0} \quad \text{except the face moments}
\quad
\int_f \vv \cdot \nn_P^f \, {\rm d}f  =
\int_f \widetilde{\vv} \cdot \nn_P^f \, {\rm d}f \,,
\\
\mathbf{D^5}_{\VV}(\vv) = \mathbf{D^5}_{\VV}(\widetilde{\vv}) \,.
\end{gathered}
\end{equation}
Notice that being $\widetilde{\vv} \in [H^1(\Omega)]^3$ the face moments in \eqref{eq:dofsv} and $\mathbf{D^5}_{\VV}(\widetilde{\vv})$ are actually well defined.
Therefore for any $P \in \Omega_h$ we infer
\begin{equation}
\label{eq:div1}
\int_P (\dd \, \vv) \, \widehat{\pp}_{k-1} \, {\rm d}P =
  \int_P (\dd \, \widetilde{\vv}) \, \widehat{\pp}_{k-1} \, {\rm d}P =
  \int_P q \, \widehat{\pp}_{k-1} \, {\rm d}P 
\qquad
\text{for all $\widehat{\pp}_{k-1} \in \widehat{\Pk}_{k-1 \setminus 0}(P)$.}
\end{equation}
Moreover employing the divergence theorem, \eqref{eq:dofsv} implies
\begin{equation}
\label{eq:div2}
\begin{split}
\int_P \dd \, \vv \, {\rm d}P &=
  \int_{\partial P} \vv \cdot \nn_P \, {\rm d}f =
  \sum_{f \in \partial P} \int_f \vv \cdot \nn_P^f \, {\rm d}f 
  \\
  &=
  \sum_{f \in \partial P} \int_f \widetilde{\vv} \cdot \nn_P^f \, {\rm d}f =
  \int_{\partial P} \widetilde{\vv} \cdot \nn_P \, {\rm d}f = 
  \int_P \dd \, \widetilde{\vv} \, {\rm d}P = 
  \int_P q \, {\rm d}P \,.
\end{split}   
\end{equation}
Notice that \eqref{eq:div1} and \eqref{eq:div2} coincide with 
$\mathbf{D}_Q(\dd \, \vv) = \mathbf{D}_Q(q)$ that coupled with $\dd \, \vv \in Q_h$  (from $\boldsymbol{(i1)}$) concludes the proof.
\end{proof}


\section{Virtual Elements for the 3-d Navier--Stokes equation}
\label{sec:VEM-ns}

We consider the steady Navier--Stokes equation on a polyhedral domain $\Omega \subseteq \R^3$ with homogeneous Dirichlet boundary
conditions:

\begin{equation}
\label{eq:ns continuous}
\left\{
\begin{aligned}
& \text{find $(\uu, \, p) \in [H^1_0(\Omega)]^3 \times L^2_0(\Omega)$, such that} \\
& \nu \, a(\uu, \,\vv) + c(\uu; \,  \uu, \vv) + b(\vv, p) = (\ff, \, \vv) \qquad & \text{for all $\vv \in [H^1_0(\Omega)]^3$,} \\
&  b(\uu, \, q) = 0 \qquad & \text{for all $q \in Q$,}
\end{aligned}
\right.
\end{equation}
where $\nu > 0$ represents the viscosity, $\ff \in [L^2(\Omega)]^3$ is the external force and  
\begin{align}
\label{eq:a-cont}
a(\uu,\, \vv) &:= \int_{\Omega} \epseps( \uu) : \epseps( \vv) \, {\rm d}\Omega 
\quad
&\text{for all $\uu$, $\vv \in [H^1(\Omega)]^3$,}
\\
\label{eq:c-cont}
c(\ww; \, \uu,\, \vv) &:= \int_{\Omega} [(\Gr \, \uu) \, \ww] \cdot  \vv \, {\rm d}\Omega 
\quad
&\text{for all $\ww$, $\uu$, $\vv \in [H^1(\Omega)]^3$,}
\\
\label{eq:b-cont}
b(\uu,\, q) &:= \int_{\Omega} \dd \, \uu  \, q \, {\rm d}\Omega 
\quad
&\text{for all $\uu \in [H^1(\Omega)]^3$ and $q \in L^2(\Omega)$.}
\end{align}
For sake of simplicity we here consider
Dirichlet homogeneous boundary conditions, different boundary conditions can be treated as well.

It is well known \cite{quarteroni-valli:book} that in the diffusion dominated regime
\[
\mathbf{(H)} \qquad \gamma := \frac{\|\ff\|_{-1}}{\nu^2} \ll 1
\]
the Navier--Stokes equation \eqref{eq:ns continuous} has a unique solution $(\uu, \, p)$ with
\[
|\uu|_1 \leq  \frac{\|\ff\|_{-1}}{\nu} \,.
\]
Moreover Problem \eqref{eq:ns continuous} can be formulated in the equivalent kernel form:
\begin{equation*}
\left\{
\begin{aligned}
& \text{find $\uu \in \ZZ_0(\Omega)$, such that} \\
& \nu \, a(\uu, \,\vv) + c(\uu; \,  \uu, \vv)  = (\ff, \, \vv) \qquad & \text{for all $\vv \in \ZZ_0(\Omega)$.} 
\end{aligned}
\right.
\end{equation*}

\subsection{Discrete forms and load term approximation}
\label{sub:forms}

In this subsection we briefly describe the
construction of a discrete version of the bilinear form
$a(\cdot, \cdot)$  given in \eqref{eq:a-cont}
and trilinear form $c(\cdot; \cdot, \cdot)$ given in \eqref{eq:c-cont}.
We can follow in a rather slavish way the procedure initially introduced in \cite{volley} for the laplace problem and further developed in \cite{BLV:2018} for flow problems.
First, we decompose into local contributions the bilinear
form $a(\cdot, \cdot)$ and the trilinear form $c(\cdot; \cdot, \cdot)$ by considering 
\begin{equation*}
a(\uu, \, \vv) =: \sum_{P \in \Omega_h} a^P(\uu, \, \vv) \,,
\qquad
c(\ww; \, \uu, \vv) =: \sum_{P \in \Omega_h} c^P(\ww; \, \uu, \vv) \,,
\end{equation*}
for all $\ww$, $\uu$, $\vv \in [H^1(\Omega)]^3$.

As usual in VEM framework the discrete counterpart of the continuous forms above are defined starting from the polynomial projections defined in \eqref{eq:P0_k^E} and \eqref{eq:Pn_k^E}.
The following proposition extends to the 3-d case the  result for the bi-dimensional spaces \cite{BLV:2017, vacca:2018}.

\begin{proposition}
\label{prp:proj}
Let $\widehat{\B}_k(f)$ and $\VV_h(P)$ be the spaces defined in \eqref{eq:Bhat_h^n} and \eqref{eq:V_h^P} respectively. 
The DoFs $\mathbf{D}_{\VV}$ allow us to compute exactly the face projections
\begin{equation*}
\Pi_{k}^{\nabla, f} \colon [\widehat{\B}_k(f)]^3 \to [\Pk_k(f)]^3 \,,
\qquad
\Pi_{k+1}^{0, f}  \colon [\widehat{\B}_k(f)]^3 \to [\Pk_{k+1}(f)]^3 \,,
\end{equation*}
for any $f \in \partial P$, and the element projections
\begin{equation*}
\begin{aligned}
\Pi_{k}^{\nabla, P} &\colon \VV_h(P) \to [\Pk_k(P)]^3 \,, 
\\
{\boldsymbol \Pi}_{k-1}^{0, P} &\colon \Gr ( \VV_h(P) ) \to [\Pk_{k-1}(P)]^{3 \times 3} \,,
\\
\Pi_{k}^{0, P} &\colon \VV_h(P) \to [\Pk_k(P)]^3 \,,
\end{aligned}
\end{equation*}
in the sense that, given any $\vv_h \in \VV_h(P)$, 
we are able to compute the polynomials 
\[
\Pi_{k}^{\nabla, f} \vv_h \,, \qquad
\Pi_{k+1}^{0, f} \vv_h \,, \qquad
\Pi_{k}^{\nabla, P} \vv_h \,, \qquad
{\boldsymbol \Pi}_{k}^{0, P} (\Gr \, \vv_h) \,, \qquad
\Pi_{k}^{0, P} \vv_h \,, 
\]
using only, as unique information, the DoFs values $\mathbf{D}_{\VV}$ of $\vv_h$.

\end{proposition}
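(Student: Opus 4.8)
The plan is to prove computability one projection at a time, in the order listed, using repeatedly the two tools of integration by parts and the polynomial decomposition \eqref{eq:poly_decomposition_3D_grad}. The guiding principle is that after integrating by parts, every surviving volume term can be matched to one of the five DoF groups, while every boundary term involves only the trace of $\vv$ on $\partial P$ (or its normal/tangential components), which is accessible through the face projections; the face projections therefore must be handled first.

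\textbf{Face projections.} Since $[\widehat{\B}_k(f)]^3$ is treated componentwise, the two face operators reduce to the scalar enhanced nodal VEM on $f$. For $\Pi^{\nabla,f}_k$ I would integrate $\int_f \gr_f q_k \cdot \gr_f v$ by parts: the volume term pairs $v$ against $\Delta_f q_k \in \Pk_{k-2}(f)$, a face moment available in $\mathbf{D^3}_{\VV}$, while the boundary term uses only the edgewise polynomial trace fixed by $\mathbf{D^1}_{\VV}$ and $\mathbf{D^2}_{\VV}$; the constant is fixed by the boundary-average condition. Once $\Pi^{\nabla,f}_k v$ is known, the ``super-enhanced'' constraint in \eqref{eq:Bhat_h^n} replaces $v$ by $\Pi^{\nabla,f}_k v$ against every monomial of degree between $k-1$ and $k+1$, the remaining moments up to degree $k-2$ being DoFs; this yields $\Pi^{0,f}_{k+1}$. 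These are exactly Proposition 2 and Remark 5 of \cite{projectors}.

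\textbf{Element $H^1$- and gradient projections.} For $\Pi^{\nabla,P}_k$ I integrate $\int_P \Gr\,\pp_k : \Gr\,\vv$ by parts. The boundary term $\int_{\partial P}(\Gr\,\pp_k\,\nn_P)\cdot\vv$ is a facewise pairing of $\vv$ against a degree-$(k-1)$ vector polynomial, computed by the already-available $\Pi^{0,f}_{k+1}$. The volume term pairs $\vv$ against $\dl\,\pp_k \in [\Pk_{k-2}(P)]^3$; here is the key move: by \eqref{eq:poly_decomposition_3D_grad} I split $\dl\,\pp_k = \gr\,r_{k-1} + \xx\wedge\pp_{k-3}$. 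The second piece is precisely $\mathbf{D^4}_{\VV}$, while the first is integrated by parts once more, producing $\int_P r_{k-1}\,\dd\,\vv$ — recovered from $\mathbf{D^5}_{\VV}$ together with the divergence-theorem value of the zeroth moment $\int_P\dd\,\vv = \int_{\partial P}\vv\cdot\nn_P$ — plus a boundary normal pairing again handled by $\Pi^{0,f}_{k+1}$; the constant is fixed by the computable boundary average. The gradient projection ${\boldsymbol\Pi}^{0,P}(\Gr\,\vv)$ follows the identical pattern, integrating $\int_P\Gr\,\vv : \boldsymbol{P}_{k-1}$ by parts and decomposing its row-divergence $\dd\,\boldsymbol{P}_{k-1} \in [\Pk_{k-2}(P)]^3$ the same way.

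\textbf{Element $L^2$-projection and the main obstacle.} I compute $\int_P\vv\cdot\pp_k$ by decomposing $\pp_k = \gr\,s_{k+1} + \xx\wedge\pp_{k-1}$ via \eqref{eq:poly_decomposition_3D_grad}. For $\gr\,s_{k+1}$, integration by parts gives $-\int_P(\dd\,\vv)s_{k+1} + \int_{\partial P}(\vv\cdot\nn_P)s_{k+1}$: since $\dd\,\vv \in \Pk_{k-1}(P)$ is fully reconstructed from $\mathbf{D^5}_{\VV}$ and the divergence theorem, the first integral pairs known polynomials, while the boundary term pairs the normal trace against a degree-$(k+1)$ polynomial — and this is exactly why the face operator is pushed to degree $k+1$. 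For the $\xx\wedge\pp_{k-1}$ part, $\mathbf{D^4}_{\VV}$ supplies moments only up to degree $k-3$; the missing degrees $k-2,k-1$ come from the enhancing constraint of \eqref{eq:V_h^P}, which equates $(\vv,\xx\wedge\widehat{\pp}_{k-1})_P$ with the computable $(\Pi^{\nabla,P}_k\vv,\xx\wedge\widehat{\pp}_{k-1})_P$ on $[\widehat{\Pk}_{k-1\setminus k-3}(P)]^3$. The delicate bookkeeping — and the heart of the argument — is this degree accounting in $\Pi^{0,P}_k$: the gradient part forces the boundary super-enhancement up to $k+1$, the $\xx\wedge$ part forces the element enhancement up to $k-1$, and one must verify that these two thresholds exactly cover the gaps left by $\mathbf{D^4}_{\VV}$ and $\mathbf{D^5}_{\VV}$, the auxiliary $\Pi^{\nabla,P}_k$ invoked in the enhancement being itself already computable from the previous step. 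Everything else is routine integration by parts.
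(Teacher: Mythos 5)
Your proposal is correct and follows essentially the same route as the paper: for $\Pi_k^{0,P}$ the paper uses exactly your decomposition $\pp_k = \nabla\,\widehat{p}_{k+1} + \xx\wedge\widehat{\boldsymbol q}_{k-1} + \xx\wedge{\boldsymbol q}_{k-3}$ from \eqref{eq:poly_decomposition_3D_grad}, handling the three pieces respectively by integration by parts (with $\dd\,\vv$ reconstructed from $\mathbf{D^5}_{\VV}$ plus the normal face moments, and the boundary term via $\Pi_{k+1}^{0,f}$), the enhancement constraint of \eqref{eq:V_h^P}, and $\mathbf{D^4}_{\VV}$, while the face projections are delegated to Remark 5 of \cite{projectors} just as you do. Your explicit treatment of $\Pi_k^{\nabla,P}$ and ${\boldsymbol\Pi}_{k-1}^{0,P}(\Gr\,\vv)$ merely fills in, correctly, what the paper compresses into ``the first two follow analogous techniques.''
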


\begin{proof}
The computability of the face projections is a direct application of Remark 5 in \cite{projectors}.
Concerning the element projections we here limit to prove the last item, the first two follow analogous techniques.

By definition of $L^2$-projection \eqref{eq:P0_k^E},
in order the determine, for any $\vv \in \VV_h(P)$, the polynomial $\Pi_k^{0, P} \vv$ 
we need to compute
\[
\int_P \vv \cdot \pp_k \, {\rm d}P   
\qquad
\text{for all $\pp_k \in [\Pk_k(P)]^3$.}
\]
From polynomial decomposition \eqref{eq:poly_decomposition_3D_grad} we can write
\[
\pp_k = \nabla \, \widehat{p}_{k+1}  + \xx \wedge \widehat{\boldsymbol q}_{k-1}
+  \xx \wedge{\boldsymbol q}_{k-3} 
\]
for some $\widehat{p}_{k+1} \in \widehat{\Pk}_{k+1 \setminus 0}(P)$,
$\widehat{\boldsymbol q}_{k-1} \in [\widehat{\Pk}_{k-1 \setminus k-3}(P)]^3$,
${\boldsymbol q}_{k-3} \in [{\Pk}_{k-3}(P)]^3$.
Thus
\[
\begin{aligned}
\int_P \vv \cdot \pp_k \, {\rm d}P = &
\int_P \vv \cdot \left(\nabla \, \widehat{p}_{k+1}  + \xx \wedge \widehat{\boldsymbol q}_{k-1}
+ \xx \wedge {\boldsymbol q}_{k-3} \right) \,{\rm d}P
\\ 
= &
\int_P \Pi_k^{\nabla, P}\vv \cdot (\xx \wedge \widehat{\boldsymbol q}_{k-1} ) \,{\rm d}P
+ \int_P \vv \cdot (\xx \wedge {\boldsymbol q}_{k-3} ) \,{\rm d}P \,+
\quad & \text{(enhancing def. \eqref{eq:V_h^P})}
\\
& - \int_P (\dd \, \vv) \, \widehat{p}_{k+1} \,{\rm d}P
+ \int_{\partial P}  \vv \cdot \nn_P \, \widehat{p}_{k+1} \, {\rm d}f
\quad & \text{(integration by parts)}
\\ 
= &
\int_P \Pi_k^{\nabla, P}\vv \cdot (\xx \wedge \widehat{\boldsymbol q}_{k-1} ) \,{\rm d}P
+ \int_P \vv \cdot (\xx \wedge {\boldsymbol q}_{k-3} ) \,{\rm d}P \,+
\\
& - \int_P (\dd \, \vv) \, \widehat{p}_{k+1} \,{\rm d}P
+ \sum_{f \in \partial P}
\int_{f}  (\Pi_{k+1}^{0, f} \vv) \cdot \nn_P^f \, \widehat{p}_{k+1} \, {\rm d}f
\quad & \text{(by def. \eqref{eq:P0_k^E})}
\end{aligned}
\]
The first and the last integrals are computable being $\Pi_k^{\nabla, P}\vv$ and $\Pi_{k+1}^{0, f}$ computable.
The second addend corresponds to the DoFs $\mathbf{D^4}_{\VV}$.
For the third addend we observe that, since $\dd \, \vv$ is a polynomial of degree less or equal than $k-1$ we can exactly reconstruct its value
from the DoFs $\mathbf{D^5}_{\VV}$ and the normal face moments in $\mathbf{D^3}_{\VV}$.
\end{proof}

On the basis of the projections above, following a standard procedure in the VEM framework,
we define the computable (in the sense of Proposition \ref{prp:proj}) discrete local forms and the approximated right hand side
\begin{align}
\label{eq:ahP}
a_h^P(\uu, \, \vv) &:= 
\int_P 
\left({\boldsymbol \Pi}_{k-1}^{0, P} \epseps  (\uu ) \right) :
\left({\boldsymbol \Pi}_{k-1}^{0, P} \epseps  (\vv ) \right) \, {\rm d}P
+ \mathcal{S}^P \left( (I - \Pi_k^{\gr, P} ) \uu, \, (I - \Pi_k^{\gr, P} ) \vv \right) \,,
\\
\label{eq:chP}
c_h^P(\ww; \, \uu, \vv) &:= 
\int_P \left[ \left(\boldsymbol{\Pi}_{k-1}^{0, P} \Gr \, \uu \right) \,  \Pi_k^{0, P} \ww\right] \cdot \Pi_k^{0, P} \vv \,{\rm d}P \,,
\\
\label{eq:fhP}
(\ff_h, \, \vv)_P &:= \int_P  \Pi_k^{0, P} \ff \cdot \vv \, {\rm d}P \,,
\end{align}
for all $\ww$, $\uu$, $\vv \in \VV_h(P)$, where clearly
\[
{\boldsymbol \Pi}_{k-1}^{0, P} \epseps  (\vv) = 
\frac{{\boldsymbol \Pi}_{k-1}^{0, P} \Gr \,  \vv + 
({\boldsymbol \Pi}_{k-1}^{0, P} \Gr \,  \vv)^{\rm  T}
}{2}
\]
and the symmetric stabilizing  form 
 $\mathcal{S}^P \colon \VV_h(P) \times \VV_h(P) \to \R$ satisfies
\[
|\vv|_{1, P}^2 \lesssim \mathcal{S}^P(\vv, \, \vv) \lesssim |\vv|_{1, P}^2
\qquad \text{for all $\vv \in  {\rm Ker}(\Pi_k^{\gr, P})$.}
\]
The condition above essentially requires the stabilizing term  $\mathcal{S}^P(\vv, \, \vv)$  to scale as  $|\vv|_{1, P}^2$. 
For instance, a standard choice for the stabilization is the  $D$-recipe stabilization introduced in \cite{BDR:2017}.

\begin{remark}
\label{rm:projectionPDbis}
The $H^1$-seminorm projection $\Pi^{\nabla, P}_k$ in the stabilization term of Definition  \eqref{eq:ahP} can be replaced by any polynomial projection $\Pi^{P}_k$ that is computable on the basis of the DoFs $\mathbf{D}_{\VV}$ (in the sense of Proposition \ref{prp:proj}). 
A possible choice will be explored in Section \ref{sec:num_test}.
\end{remark}

The global virtual forms and the global approximated right-hand side
are defined by simply
summing the local contributions:
\begin{equation}
\label{eq:formh}
a_h(\uu, \, \vv) := \sum_{P \in \Omega_h} a_h^P(\uu, \,\vv)\,,
\quad
c_h(\ww; \, \uu, \, \vv) := \sum_{P \in \Omega_h} c_h^P(\ww; \, \uu, \, \vv)\,,
\quad
(\ff_h, \, \vv) := \sum_{P \in \Omega_h} (\ff_h, \,\vv)_P \,,
\end{equation}
for all $\ww$, $\uu$, $\vv \in \VV_h$.

%
%

\subsection{ The discrete problem}
\label{seb:discrete-problme}

Referring to the discrete spaces \eqref{eq:V_h}, \eqref{eq:Q_h}, the discrete forms and the approximated load term \eqref{eq:formh} and the $\dd$ form \eqref{eq:b-cont},
the virtual element approximation of the
Navier--Stokes equation  is given by
\begin{equation}
\label{eq:ns virtual}
\left\{
\begin{aligned}
& \text{find $(\uu_h, \,  p_h) \in \VV_{h, 0} \times Q_{h, 0}$, such that} \\
& \nu \, a_h(\uu_h, \, \vv_h) + c_h(\uu_h; \,  \uu_h, \vv_h) + b(\vv_h, \, p_h) = (\ff_h, \, \vv_h) \qquad & \text{for all $\vv_h \in \VV_{h, 0}$,} \\
&  b(\uu_h, \, q_h) = 0 \qquad & \text{for all $q_h \in Q_{h, 0}$,}
\end{aligned}
\right.
\end{equation}
where $\VV_{h, 0} := \VV_h \cap [H^1_0(\Omega)]^3$ and $Q_{h, 0} := Q_h \cap L^2_0(\Omega)$.

Recalling the kernel inclusion \eqref{eq:kernel_inclusion},
Problem~\eqref{eq:ns virtual} can be also formulated in the equivalent kernel form
\begin{equation}
\label{eq:nsvirtual ker}
\left\{
\begin{aligned}
& \text{find $\uu_h \in \ZZ_{h, 0}$, such that} \\
& \nu \, a_h(\uu_h, \, \vv_h) + c_h(\uu_h; \, \uu_h, \vv_h) = (\ff_h, \vv_h) \qquad & \text{for all $\vv_h \in \ZZ_{h, 0}$,} 
\end{aligned}
\right.
\end{equation}
with the obvious notation $\ZZ_{h, 0} := \ZZ_h \cap [H^1_0(\Omega)]^3$. 

%

Combining the arguments in \cite{BLV:2017, BLV:2018, brenner-sung:2018} it is possible to show that the virtual space $\VV_h$ has  an optimal interpolation order of accuracy with respect to the degree $k$,
and that the couple of spaces $(\VV_h, \, Q_h)$ is inf-sup stable \cite{boffi-brezzi-fortin:book}.
The following existence and convergence theorem extends the analogous result for the bi-dimensional case \cite{BLV:2018}.

\begin{theorem}
\label{thm:u}
Under the assumptions  $\mathbf{(A1)}$, $\mathbf{(A2)}$, $\mathbf{(A3)}$ and
 and $\mathbf{(H)}$, 
let $(\uu, \, p) \in [H^1_0(\Omega)]^3 \times L^2_0(\Omega)$ be the solution of Problem \eqref{eq:ns continuous} and $(\uu_h, \, p_h) \in \VV_{h, 0} \times Q_{h, 0}$ be the (unique) solution of Problem \eqref{eq:ns virtual}. 
Assuming moreover $\uu, \ff \in [H^{s+1}(\Omega)]^3$ and $p \in H^s(\Omega)$, $0 < s \leq k$, then 
\begin{gather}
\label{eq:thm:u}
| \uu - \uu_h |_{1} \lesssim \, h^{s} \, \mathcal{F}(\uu; \, \nu, \gamma) + \, h^{s+2} \, \mathcal{H}(\ff; \nu) \,,
\\
\label{eq:p-est}
\|p  - p_h\|_0 \lesssim \, h^{s} \, |p|_{s} +  h^s \, \mathcal{K}(\uu; \nu, \gamma) +  h^{s+2} \, |\ff|_{s+1} 
\end{gather}
for suitable functions 
$\mathcal{F}$, 
$\mathcal{H}$,
$\mathcal{K}$ 
independent of $h$. 
\end{theorem}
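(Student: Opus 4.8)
The plan is to follow the well-established VEM convergence analysis for the Navier--Stokes problem (developed in the two-dimensional setting in \cite{BLV:2018, BLV:2017}), whose abstract structure transfers to three dimensions once the building blocks are available. The three key tools are: (i) the kernel inclusion \eqref{eq:kernel_inclusion}, $\ZZ_h \subseteq \ZZ$, which lets us work with the reduced formulation \eqref{eq:nsvirtual ker} and thereby decouple the velocity analysis from the pressure; (ii) the coercivity and polynomial consistency of the discrete forms $a_h$ and $c_h$ built from the computable projections of Proposition \ref{prp:proj}; and (iii) the inf--sup stability of the pair $(\VV_h, Q_h)$ together with the optimal approximation properties of $\VV_h$, which contains $[\Pk_k(P)]^3$ locally.

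First I would derive the velocity estimate \eqref{eq:thm:u}. Choosing a discretely divergence-free interpolant $\uu_I \in \ZZ_{h, 0}$ of $\uu$ (whose existence rests on $\dd\,\VV_h \subseteq Q_h$ together with $\dd\,\uu = 0$) and a piecewise polynomial approximation $\uu_\pi$, I set $\ddd_h := \uu_h - \uu_I \in \ZZ_{h, 0}$ and test against $\ddd_h$ the error equation obtained by subtracting \eqref{eq:nsvirtual ker} from the kernel form of \eqref{eq:ns continuous}. Coercivity of $a_h$ on the kernel bounds $\nu\,|\ddd_h|_1^2$ from below, while the right-hand side splits into the diffusion consistency error, the convective consistency error, and the load consistency error. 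The first is controlled by the polynomial consistency of $a_h$ and the approximation estimates for ${\boldsymbol \Pi}_{k-1}^{0, P}\epseps$, giving order $h^s$; combined with $|\uu - \uu_I|_1 \lesssim h^s\,|\uu|_{s+1}$ and the triangle inequality, this produces the leading term.

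The delicate steps are the convection and load terms. For the trilinear form I would write $c_h(\uu_h; \uu_h, \ddd_h) - c(\uu; \uu, \ddd_h)$ as a telescoping sum, estimating each piece through the continuity bounds on $c$ and $c_h$ and the interpolation errors, and crucially invoke the small-data hypothesis $\mathbf{(H)}$, $\gamma \ll 1$, to absorb the contribution proportional to $|\ddd_h|_1^2$ into the left-hand side; this is precisely what guarantees uniqueness of $\uu_h$ and closes the estimate, yielding the factor $\mathcal{F}(\uu; \nu, \gamma)$. For the load term the pressure-robust character of the scheme enters: since $\ddd_h$ is discretely divergence-free and the forcing acts through $\Pi_k^{0, P}\ff$, the discrepancy $(\ff_h, \ddd_h) - (\ff, \ddd_h)$ is orthogonal to gradient fields, so a Helmholtz-type argument gains two additional powers of $h$ and supplies the superconvergent term $h^{s+2}\,\mathcal{H}(\ff; \nu)$.

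Finally, for the pressure estimate \eqref{eq:p-est} I would fix a suitable $p_I \in Q_{h, 0}$ with $\|p - p_I\|_0 \lesssim h^s\,|p|_s$ and apply the discrete inf--sup condition to $p_h - p_I$: testing the full momentum equation \eqref{eq:ns virtual} against arbitrary $\vv_h \in \VV_{h, 0}$ expresses $b(\vv_h, p_h - p_I)$ through the diffusion, convective and load consistency errors already controlled, plus $b(\vv_h, p - p_I)$; dividing by $|\vv_h|_1$ and taking the supremum yields the stated bound, with the velocity contribution entering via $\mathcal{K}(\uu; \nu, \gamma)$. The main obstacle throughout is the careful bookkeeping of the nonlinear convective consistency error in three dimensions while simultaneously preserving the small-data closure and the pressure-robust gain of two orders in $h$ for the force contribution.
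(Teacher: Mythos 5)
Your outline follows exactly the route the paper intends: the paper gives no detailed proof of Theorem \ref{thm:u}, asserting instead that the statement ``extends the analogous result for the bi-dimensional case'' and pointing to a combination of the arguments of \cite{BLV:2017,BLV:2018,brenner-sung:2018} for the interpolation estimates and the inf-sup stability of $(\VV_h,Q_h)$. Your three pillars --- the kernel inclusion \eqref{eq:kernel_inclusion} with the kernel formulation \eqref{eq:nsvirtual ker} to decouple the velocity analysis from the pressure, coercivity and polynomial consistency of $a_h$ with absorption of the convective consistency error under the small-data hypothesis $\mathbf{(H)}$, and the inf-sup argument for \eqref{eq:p-est} --- are precisely the ingredients of that two-dimensional analysis, transferred verbatim.

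One step of your argument is wrong as stated, although the conclusion survives: the mechanism for the superconvergent load term $h^{s+2}\,\mathcal{H}(\ff;\nu)$. The discrepancy $(\ff_h,\ddd_h)-(\ff,\ddd_h)$ is \emph{not} orthogonal to gradient fields, and no Helmholtz-type argument applies to it; indeed the paper explicitly remarks (Example 3) that the scheme is \emph{not} pressure-robust in the sense of \cite{benchmark} precisely because the gradient part of $\ff$ pollutes the velocity error through the load approximation. What is true is that the \emph{exact} pairing annihilates gradients, since $\ddd_h\in\ZZ_{h,0}\subseteq\ZZ$ is pointwise divergence-free by \eqref{eq:kernel_inclusion}; the two extra powers of $h$ come instead from the double $L^2$-orthogonality of the projection in \eqref{eq:fhP}: on each element,
\[
(\ff-\Pi_k^{0,P}\ff,\,\ddd_h)_P=(\ff-\Pi_k^{0,P}\ff,\,\ddd_h-\Pi_k^{0,P}\ddd_h)_P\lesssim h^{s+1}\,|\ff|_{s+1,P}\cdot h\,|\ddd_h|_{1,P}\,,
\]
which after summation over $P\in\Omega_h$ yields the bound $h^{s+2}\,|\ff|_{s+1}\,|\ddd_h|_1$ and explains why $\mathcal{H}$ requires the regularity $\ff\in[H^{s+1}(\Omega)]^3$ assumed in the statement. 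With this replacement your sketch coincides with the standard proof; the remaining items (discretely divergence-free interpolant $\uu_I\in\ZZ_{h,0}$, telescoping of $c_h(\uu_h;\uu_h,\cdot)-c(\uu;\uu,\cdot)$ with absorption under $\mathbf{(H)}$, and the inf-sup treatment producing $\mathcal{K}$) are correct and match the cited template.
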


Note that, as a consequence of the important property \eqref{eq:kernel_inclusion}, there is no direct dependence of the velocity error on the pressure solution.

\begin{remark}
\label{rm:curlformulation}
Since Proposition \ref{prp:inclusion2} yields an explicit characterization of $\ZZ_h$ as $\CC \, \Sg_h$, one could follow \eqref{eq:nsvirtual ker} and build an equivalent $\CC$ (discrete) formulation (see for instance Problem (77) in \cite{BMV:2018}).
Such approach is less appealing in 3-d since the $\CC$ operator has a non trivial kernel and thus some stabilization or additional Lagrange multiplier would be needed in the formulation. 
Moreover this approach does not seem to be competitive in terms of number of DoFs with the reduced version of the method (see Subsection \ref{sub:reduced}).
As a consequence, we do not explore any scheme resulting from the $\CC$ formulation.  
\end{remark}

\subsection{Reduced spaces and reduced scheme}
\label{sub:reduced}

In the present section we briefly 
show that Problem \eqref{eq:ns virtual} is somehow equivalent to a suitable reduced
problem  entangling relevant fewer DoFs, especially for
large $k$. 
This reduction is analogous to its two-dimensional counterpart in Section 5 in \cite{BLV:2017} and Section 5.2 in \cite{vacca:2018}.

The core idea is that $\mathbf{D^5}_{\VV}(\uu_h) = {\bf 0}$, where $\uu_h$ denotes the solution of \eqref{eq:ns virtual}, and therefore such degrees of freedom (and also the associated pressures) can be trivially eliminated from the system.
Hence on each polygon $P$, let us define the reduced local  spaces:
\begin{equation*}
\begin{aligned}
\widetilde{\VV}_h(P) := \biggl\{  
\vv \in [H^1(P)]^3 \, \, \, \text{s.t.}  \, \, \,
& \vv_{|\partial P} \in [\widehat{\B}_k(\partial P)]^3
\\
&  \biggl\{
\begin{aligned}
& \boldsymbol{\Delta}    \vv  +  \nabla s \in \xx \wedge [\Pk_{k-1}(P)]^3,  \\
& {\rm div} \, \vv \in \Pk_{0}(P),
\end{aligned}
\biggr. \quad  \text{ for some $s \in  L^2_0(P)$} 
\\
\biggl.
&  \left(  \vv - \Pi_k^{\nabla, P} \vv  , \, \xx \wedge \widehat{\pp}_{k-1} \right)_P  = 0 \quad
\text{for all $\widehat{\pp}_{k-1} \in  [\widehat{\Pk}_{k-1 \setminus k-3}(P)]^3$}
\biggr \}
\end{aligned}
\end{equation*}
and
\begin{equation*}
\widetilde{Q}_h(P) := \Pk_{0}(P) \,.
\end{equation*}
Exploiting the same tools of Proposition \ref{prp:V_h^E_dofs} it can be proved that 
the  linear operators $\widetilde{\mathbf{D}}_{\VV}$, split into four subsets, defined by
\[
\mathbf{\widetilde{D}^{\boldsymbol{i}}}_{\VV} =
\mathbf{D^{\boldsymbol{i}}}_{{\VV}} 
\qquad \text{for $\boldsymbol{i = 1, 2, 3, 4}$,}
\]
constitute a set of DoFs for $\widetilde{\VV}_h(P)$.
Concerning the space $\widetilde{Q}_h(P)$, it is straightforward to see that 
$\dim(\widetilde{Q}_h(P)) = 1$ with unique DoF $\widetilde{\mathbf{D}}_{Q}$ defined by
$
\widetilde{\mathbf{D}}_{Q}(q) :=  \int_P q \,{\rm d}P
$.
The global spaces $\widetilde{\VV}_h$ and $\widetilde{Q}_h$ are obtained in the standard fashion by gluing the local spaces:
\begin{align}
\label{eq:Vr_h}
\widetilde{\VV}_h &:= \{\vv \in [H^1(\Omega)]^3 \quad \text{s.t.} \quad \vv_{|P} \in \widetilde{\VV}_h(P) \} \,,
\\
\label{eq:Qr_h}
\widetilde{Q}_h &:= \{q \in L^2(\Omega) \quad \text{s.t.} \quad q_{|P} \in \widetilde{Q}_h(P) \} \,.
\end{align}
%
We remark that by construction $\ZZ_h \subseteq \widetilde{\VV}_h$, therefore employing Proposition \ref{prp:inclusion1} and Proposition \ref{prp:inclusion2} we can state the following result.

\begin{proposition}
\label{prp:virtualr_exact}
Referring to \eqref{eq:W_h}, \eqref{eq:Sg_h}, \eqref{eq:Vr_h} and \eqref{eq:Qr_h}, the sequence
\[
\R \, \xrightarrow[]{ \,\quad \text{{$i$}} \quad \,   } \,
W_h \, \xrightarrow[]{   \quad \text{{$\gr$}} \quad  }\,
\Sg_h \, \xrightarrow[]{ \, \, \, \text{{$\CC$}} \, \, \, }\,
\widetilde{\VV}_h \, \xrightarrow[]{  \, \, \, \, \text{{$\dd$}} \, \, \, \, }\,
\widetilde{Q}_h \, \xrightarrow[]{ \quad 0 \quad }
0 
\]
is an exact sub-complex of \eqref{eq:exact}.
\end{proposition}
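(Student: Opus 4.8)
The plan is to reduce everything to the already-established exactness of \eqref{eq:virtual_exact} (Theorem \ref{thm:exact}), exploiting the fact that the two leftmost spaces $W_h$ and $\Sg_h$ are literally unchanged, so that the only genuinely new verifications concern the two rightmost nodes $\widetilde{\VV}_h$ and $\widetilde{Q}_h$. First I would record the sub-complex inclusions $\widetilde{\VV}_h \subseteq [H^1(\Omega)]^3$ and $\widetilde{Q}_h \subseteq L^2(\Omega)$ coming directly from \eqref{eq:Vr_h} and \eqref{eq:Qr_h}, together with $W_h \subseteq H^1(\Omega)$ and $\Sg_h \subseteq \Sg(\Omega)$ from before, and check that the three maps are still the restrictions of $\gr$, $\CC$ and $\dd$. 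Exactness at $W_h$ (that is $\ker \gr = \R$, since $\Omega$ is connected and $W_h \subseteq H^1(\Omega)$) and exactness at $\Sg_h$ (namely $\nabla\, W_h = \{\ffi \in \Sg_h : \CC\,\ffi = {\bf 0}\}$) are then inherited verbatim from Proposition \ref{prp:inclusion1}, provided we first know that $\CC$ still maps $\Sg_h$ into $\widetilde{\VV}_h$.

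That landing property $\CC \, \Sg_h \subseteq \widetilde{\VV}_h$ is precisely where the hypothesis $\ZZ_h \subseteq \widetilde{\VV}_h$ enters: by Proposition \ref{prp:inclusion2} we have $\CC\,\Sg_h = \ZZ_h$, and $\ZZ_h \subseteq \widetilde{\VV}_h$ by construction of the reduced space. Dually, the complex property $\dd\,\widetilde{\VV}_h \subseteq \widetilde{Q}_h$ is immediate from the defining constraint ${\rm div}\,\vv \in \Pk_0(P)$ of the local reduced space $\widetilde{\VV}_h(P)$, which forces $\dd\,\vv$ to be piecewise constant, hence an element of $\widetilde{Q}_h$.

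The crux is exactness at $\widetilde{\VV}_h$, i.e. the identity $\CC\,\Sg_h = \{\vv \in \widetilde{\VV}_h : \dd\,\vv = {\bf 0}\}$. Since $\CC\,\Sg_h = \ZZ_h$ by Proposition \ref{prp:inclusion2}, this amounts to showing that the divergence-free subspace of $\widetilde{\VV}_h$ coincides with $\ZZ_h$. One inclusion follows from $\ZZ_h \subseteq \widetilde{\VV}_h$ together with the fact that every element of $\ZZ_h$ is divergence free. For the reverse inclusion I would observe that $\widetilde{\VV}_h(P)$ and $\VV_h(P)$ of \eqref{eq:V_h^P} differ only in the admissible degree of ${\rm div}\,\vv$ (namely $\Pk_0(P)$ versus $\Pk_{k-1}(P)$), all the remaining defining conditions being identical; hence any $\vv \in \widetilde{\VV}_h$ with $\dd\,\vv = {\bf 0}$ automatically satisfies the defining conditions of $\VV_h$, and being divergence free it lies in $\ZZ_h$. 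This yields the desired equality.

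Finally, for exactness at $\widetilde{Q}_h$ I would prove surjectivity $\dd\,\widetilde{\VV}_h = \widetilde{Q}_h$ by mimicking the argument of Proposition \ref{prp:inclusion3}: given $q \in \widetilde{Q}_h \subseteq Q_h$, that proposition supplies a $\vv \in \VV_h$ with $\dd\,\vv = q$, and since $q$ is piecewise constant we have $\dd\,\vv \in \Pk_0(P)$ on each $P$, so in fact $\vv \in \widetilde{\VV}_h$. I expect the only delicate point to be the kernel identification of the previous paragraph, namely convincing oneself that lowering the admissible divergence degree in the definition of the local space leaves the space of divergence-free functions unchanged; the remaining steps are routine bookkeeping once Propositions \ref{prp:inclusion1}--\ref{prp:inclusion3} are in hand.
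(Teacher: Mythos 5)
Your proposal is correct and follows essentially the same route as the paper, which compresses the whole argument into the single remark that $\ZZ_h \subseteq \widetilde{\VV}_h$ by construction and then invokes Propositions \ref{prp:inclusion1} and \ref{prp:inclusion2}. Your write-up simply makes explicit the bookkeeping the paper leaves implicit (the inclusion $\widetilde{\VV}_h \subseteq \VV_h$, so divergence-free elements of $\widetilde{\VV}_h$ lie in $\ZZ_h$, and the surjectivity $\dd\,\widetilde{\VV}_h = \widetilde{Q}_h$ via the construction in Proposition \ref{prp:inclusion3}), all of which is sound.
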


 Referring to 
\eqref{eq:Vr_h}, \eqref{eq:Qr_h} and \eqref{eq:formh}, we consider the reduced problem:
\begin{equation}
\label{eq:ns virtualr}
\left\{
\begin{aligned}
& \text{find $(\widetilde{\uu}_h, \,  \widetilde{p}_h) \in \widetilde{\VV}_{h, 0} \times \widetilde{Q}_{h, 0}$, such that} \\
& \nu \, a_h(\widetilde{\uu}_h, \, \vv_h) + c_h(\widetilde{\uu}_h; \,  \widetilde{\uu}_h, \vv_h) + b(\vv_h, \, \widetilde{p}_h) = (\ff_h, \, \vv_h) \qquad & \text{for all $\vv_h \in \widetilde{\VV}_{h, 0}$,} \\
&  b(\widetilde{\uu}_h, \, q_h) = 0 \qquad & \text{for all $q_h \in \widetilde{Q}_{h, 0}$,}
\end{aligned}
\right.
\end{equation}
where $\widetilde{\VV}_{h, 0} := \widetilde{\VV}_h \cap [H^1_0(\Omega)]^3$ and $\widetilde{Q}_{h, 0} := \widetilde{Q}_h \cap L^2_0(\Omega)$.

It is trivial to check that the reduced scheme \eqref{eq:ns virtualr} has
$ (2 \, \pi_{k-1, 3} - 2) \, N_P$ degrees of freedom less when compared with the original one \eqref{eq:ns virtual}.

The following proposition is easy to check and states the relation between Problem \eqref{eq:ns virtual} and the reduced Problem \eqref{eq:ns virtualr}.

\begin{proposition}
\label{prp:equivalent}
Let $(\uu_h, \, p_h) \in \VV_h \times Q_h$  and   $(\widetilde{\uu}_h, \, \widetilde{p}_h) \in \widetilde{\VV}_h \times \widetilde{Q}_h$ be the solution of Problem \eqref{eq:ns virtual} and Problem \eqref{eq:ns virtualr} respectively. Then 
\[
\widetilde{\uu}_h = \uu_h   \qquad \text{and} \qquad
\widetilde{p}_h = \Pi_0^{0, P} p_h \quad \text{in every $P \in \Omega_h$.}
\]
\end{proposition}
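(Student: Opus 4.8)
The plan is to establish both claimed identities by reducing everything to the local DoF description of the spaces and using the already-proved structural facts about $\VV_h$, $Q_h$ and their reduced counterparts. The key observation is that the only difference between the full and reduced settings is the elimination of the higher-order divergence moments $\mathbf{D^5}_{\VV}$ and the corresponding pressure moments, while the solutions of Problem~\eqref{eq:ns virtual} already satisfy $\dd\,\uu_h\in\Pk_0(P)$ on each element. Indeed, since $\uu_h\in\ZZ_{h,0}$ (by \eqref{eq:nsvirtual ker}) and $\dd\,\VV_h\subseteq Q_h$, testing the constraint $b(\uu_h,q_h)=0$ against all $q_h\in Q_{h,0}$ forces the element-wise divergence of $\uu_h$ to be $L^2$-orthogonal to $\widehat{\Pk}_{k-1\setminus 0}(P)$ on each $P$, so in fact $\mathbf{D^5}_{\VV}(\uu_h)={\bf 0}$. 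This is exactly the ``core idea'' quoted before the statement.

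First I would verify the inclusion $\uu_h\in\widetilde{\VV}_{h,0}$: since $\mathbf{D^5}_{\VV}(\uu_h)={\bf 0}$ means $\dd\,\uu_h\in\Pk_0(P)$ on each $P$, the function $\uu_h$ satisfies the defining conditions of $\widetilde{\VV}_h(P)$ (the only change from $\VV_h(P)$ being $\dd\,\vv\in\Pk_0(P)$ rather than $\Pk_{k-1}(P)$), and its boundary and enhancing conditions are unchanged. Next I would show that the pair $(\uu_h,\Pi_0^{0,P}p_h)$ solves the reduced problem \eqref{eq:ns virtualr}. The divergence constraint $b(\uu_h,q_h)=0$ for all $q_h\in\widetilde{Q}_{h,0}$ holds trivially since $\widetilde{Q}_h\subseteq Q_h$. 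For the momentum equation, I would test \eqref{eq:ns virtual} against an arbitrary $\vv_h\in\widetilde{\VV}_{h,0}\subseteq\VV_{h,0}$; the only term to reexamine is $b(\vv_h,p_h)=\int_\Omega\dd\,\vv_h\,p_h\,{\rm d}\Omega$. Because $\dd\,\vv_h\in\Pk_0(P)$ for every $\vv_h\in\widetilde{\VV}_h$, the definition of the $L^2$-projection onto $\Pk_0(P)$ gives, element by element, $\int_P\dd\,\vv_h\,p_h\,{\rm d}P=\int_P\dd\,\vv_h\,(\Pi_0^{0,P}p_h)\,{\rm d}P$, whence $b(\vv_h,p_h)=b(\vv_h,\Pi_0^{0,P}p_h)$. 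All other forms $a_h$, $c_h$ and the load $(\ff_h,\cdot)$ depend only on the velocity arguments, so they are unchanged. Thus $(\uu_h,\Pi_0^{0,P}p_h)$ satisfies the reduced system.

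Finally I would invoke uniqueness for Problem \eqref{eq:ns virtualr}, which holds under the same hypothesis $\mathbf{(H)}$ used for \eqref{eq:ns virtual} together with the inf-sup stability of $(\widetilde{\VV}_h,\widetilde{Q}_h)$ (itself a consequence of the exactness stated in Proposition \ref{prp:virtualr_exact}, giving $\dd\,\widetilde{\VV}_h=\widetilde{Q}_h$). Since $(\widetilde{\uu}_h,\widetilde{p}_h)$ is the unique solution and $(\uu_h,\Pi_0^{0,P}p_h)$ also solves the reduced problem, the two must coincide, yielding $\widetilde{\uu}_h=\uu_h$ and $\widetilde{p}_h=\Pi_0^{0,P}p_h$ on every $P\in\Omega_h$.

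I expect the only genuinely delicate point to be the justification that $\mathbf{D^5}_{\VV}(\uu_h)={\bf 0}$, i.e.\ that the discrete solution's divergence is actually a constant on each element and not merely orthogonal to the mean-free pressures tested in the scheme. This follows cleanly from $\dd\,\VV_h\subseteq Q_h$ and the fact that $Q_{h,0}$ contains all element-wise polynomials of degree $\le k-1$ with zero global mean, which suffices to control every $\widehat{\Pk}_{k-1\setminus 0}(P)$ mode; once this is in place the rest is the routine projection bookkeeping sketched above, and the proposition is indeed ``easy to check'' as asserted.
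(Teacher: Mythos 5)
Your overall plan---prove $\mathbf{D^5}_{\VV}(\uu_h)={\bf 0}$, deduce $\uu_h\in\widetilde{\VV}_{h,0}$, verify that $(\uu_h,\,\Pi_0^{0,P}p_h)$ solves \eqref{eq:ns virtualr}, and conclude by uniqueness---is the natural one, and in fact the paper offers no proof at all (the proposition is stated as ``easy to check'', deferring to the two-dimensional analogues), so your argument fills in exactly the route the paper gestures at. The pressure bookkeeping is correct: for $\vv_h\in\widetilde{\VV}_{h,0}$ one has $\dd\,\vv_h\in\Pk_0(P)$ elementwise, hence $b(\vv_h,p_h)=b(\vv_h,\widetilde{p}_h)$ with $\widetilde{p}_h:=\Pi_0^{0,P}p_h\in\widetilde{Q}_{h,0}$, while $a_h$, $c_h$ and $(\ff_h,\cdot)$ are untouched since $\widetilde{\VV}_{h,0}\subseteq\VV_{h,0}$.

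There is, however, one genuinely false step: you assert that $\mathbf{D^5}_{\VV}(\uu_h)={\bf 0}$ ``means'' $\dd\,\uu_h\in\Pk_0(P)$. This equivalence fails in both directions, because the splitting $\Pk_{k-1}(P)=\Pk_0(P)\oplus{\rm span}\,\widehat{\Pk}_{k-1\setminus 0}(P)$ is \emph{not} $L^2(P)$-orthogonal: a polynomial whose moments against all $\widehat{p}\in\widehat{\Pk}_{k-1\setminus 0}(P)$ vanish need not be constant (already in one variable on $(0,1)$ the function $g(x)=1-\tfrac{3}{2}x$ satisfies $\int_0^1 g\,x\,{\rm d}x=0$), and conversely a nonzero constant has in general nonvanishing moments against $\widehat{\Pk}_{k-1\setminus 0}(P)$. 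So the subspace $\{\vv\in\VV_h(P):\mathbf{D^5}_{\VV}(\vv)={\bf 0}\}$ is not $\widetilde{\VV}_h(P)$, and your ``delicate point'' is argued incorrectly. The repair is immediate and yields more: since $\uu_h\in\VV_{h,0}$ we have $\int_\Omega \dd\,\uu_h\,{\rm d}\Omega=0$, so for any $p\in\Pk_{k-1}(P)$ the function $q:=p\,\chi_P-\frac{1}{|\Omega|}\int_P p\,{\rm d}P$ belongs to $Q_{h,0}$ and $0=b(\uu_h,q)=\int_P \dd\,\uu_h\, p\,{\rm d}P$; choosing $p=\dd\,\uu_h$ (legitimate because $\dd\,\VV_h\subseteq Q_h$) gives $\dd\,\uu_h\equiv 0$, i.e.\ precisely the kernel inclusion \eqref{eq:kernel_inclusion}, after which $\dd\,\uu_h\in\Pk_0(P)$ and $\mathbf{D^5}_{\VV}(\uu_h)={\bf 0}$ hold trivially. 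A smaller caveat concerns your uniqueness step: exactness $\dd\,\widetilde{\VV}_h=\widetilde{Q}_h$ (Proposition \ref{prp:virtualr_exact}) gives surjectivity of the divergence but not by itself a (uniform) inf-sup bound. It is cleaner to note that the discrete kernels coincide---if $\vv\in\widetilde{\VV}_{h,0}$ satisfies $b(\vv,q)=0$ for all $q\in\widetilde{Q}_{h,0}$, then $\dd\,\vv$ is piecewise constant with zero mean, hence admissible as a test function, hence zero---so the reduced kernel problem is literally \eqref{eq:nsvirtual ker} and the velocity uniqueness under $\mathbf{(H)}$ is inherited verbatim; pressure uniqueness then follows from the inf-sup stability the paper asserts, restricted to the reduced pair.
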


\newcommand{\Franco}[1]{\noindent{\color{green}\textbf{[F:~#1]}}}
\newcommand{\X}{\mathbf{x}}

\section{Numerical validation}\label{sec:num_test}

In this section we numerically verify the proposed discretization scheme. 
Before dealing with such examples,
we briefly describe an alternative (computationally cheaper) projection adopted in the implementation of the method.
Then we outline the polyhedral meshes and the error norms 
used in the analysis.

\subsection{An alternative DoFs-based projection}
\label{sub:dofs-proj}

In the light of Remark \ref{rm:projectionPD} and Remark \ref{rm:projectionPDbis},
the aim of the present subsection is to exhibit an alternative projection to be used in the place of the standard $H^1$-seminorm projection $\Pi^{\nabla, P}_k$ in \eqref{eq:V_h^P} and \eqref{eq:ahP} that will turn out to be very easy to implement.
An analogous alternative projection could also be used to substitute $\Pi^{\nabla, f}_k$ in \eqref{eq:Bhat_h^n}.

For any element $P \in \Omega_h$, let $\texttt{NDoF} := \dim(\VV_h(P))$.
Then referring to Proposition \ref{prp:V_h^E_dofs} we set 
$\mathbf{D}_{\VV} := \{\mathbf{D}_{\VV, i}\}_{i=1}^{\texttt{NDoF}}$,
and we denote with $\mathcal{D} \colon\VV_h(P) \to \R^{\texttt{NDoF}}$  the linear operator defined for all
$\vv \in \VV_h(P)$ by
\[
\left(\mathcal{D} \, \vv \right)_i = \mathbf{D}_{\VV, i} (\vv)  \qquad \text{for $i=1, \dots, \texttt{NDoF}$,}
\]
i.e. $\mathcal{D} \, \vv$ is the vector containing the degree of freedom values $\mathbf{D}_{\VV}$ associated to $\vv$.
We consider:
\begin{itemize}
\item the \textbf{DoFs-projection} ${\Pi}_{n}^{\mathcal{D},P} \colon \VV_h(P) \to [\Pk_n(P)]^3$    defined for any $\vv \in \VV_h(P)$ by 
\begin{equation}
\label{eq:PD_k^E}
\left(\mathcal{D}\, {\boldsymbol q}_n \,, \mathcal{D}\, (\vv - {\Pi}_{n}^{\mathcal{D},P} \, \vv )   \right)_{\R^{\texttt{NDoF}}} = 0
\qquad  \text{for all ${\boldsymbol q}_n \in [\Pk_n(P)]^3$.} 
\end{equation} 
\end{itemize}
Notice that ${\Pi}_{n}^{\mathcal{D},P}$ is a special case of the serendipity projection introduced in
\cite{BBMR:2016:serendipity}.

Although the projection  ${\Pi}_{n}^{\mathcal{D},P}$ may seem awkward on paper, it is quite simple and cheap to implement on the computer (since it is nothing but an euclidean projection with respect to the degree of freedom vectors). Indeed, it can be checked that the matrix formulation $\boldsymbol{\Pi}_{n}^{\mathcal{D},P}$ of the operator ${\Pi}_{n}^{\mathcal{D},P}$ acting from $\VV_h(P)$ to $\VV_h(P)$ (containing $[\Pk_n(P)]^3$) with respect to the basis $\boldsymbol{\mathcal{V}}$
(cf. \cite{autostoppisti}, formula (3.18))
is 
\[
\boldsymbol{\Pi}_{n}^{\mathcal{D},P} = D \, (D^T D)^{-1} D^T \in \R^{\texttt{NDoF} \times \texttt{NDoF}}\,,
\]
where $D \in \R^{\texttt{NDoF} \times 3\pi_{n, 3}}$ is the matrix defined by (cf. \cite{autostoppisti}, formula (3.17))
\[
D_{i,\alpha} := \mathbf{D}_{\VV, i} ({\boldsymbol m}_{\alpha})
\quad \text{for $i=1, \dots, \texttt{NDoF}$ and 
$\alpha =1, \dots, 3\pi_{n, 3}$,}
\]
where using standard VEM notation, ${\boldsymbol m}_{\alpha}$ denotes the scaled monomial
\[
{\boldsymbol m}_{\alpha} := 
\left(
\left(\frac{\xx - \xx_B}{h_P} \right)^{\boldsymbol{\alpha_1}}, \quad
\left(\frac{\xx - \xx_B}{h_P} \right)^{\boldsymbol{\alpha_2}}, \quad  
\left(\frac{\xx - \xx_B}{h_P} \right)^{\boldsymbol{\alpha_3}}
\right)^T
\] 
with $\xx_B$ barycenter of the polyhedron $P$, and ${\boldsymbol{\alpha_1}}$, ${\boldsymbol{\alpha_2}}$ and ${\boldsymbol{\alpha_3}}$ suitable multi-indexes.

\subsection{Meshes and error norms}

We consider the standard $[0,\,1]^3$ cube as domain $\Omega$ 
and we make four different discretizations of such domain: 
\begin{enumerate}[a)]
 \item \textbf{Structured} refers to meshes composed by structured cubes inside the domain, Figure~\ref{fig:meshes}  (a).
 \item \textbf{Tetra} is a constrained Delaunay tetrahedralization of $\Omega$, Figure~\ref{fig:meshes} (b).
 \item \textbf{CVT} refers to a Centroidal Voronoi Tessellation, i.e., 
 a Voronoi tessellation where the control points coincide with the centroid of the cells they define, Figure~\ref{fig:meshes} (c).
 \item \textbf{Random} is a Voronoi diagram of a point set randomly displayed inside the domain $\Omega$, Figure~\ref{fig:meshes} (d).
\end{enumerate}

\begin{figure}[!htb]
\begin{center}
\begin{tabular}{cc}
\includegraphics[width=0.37\textwidth]{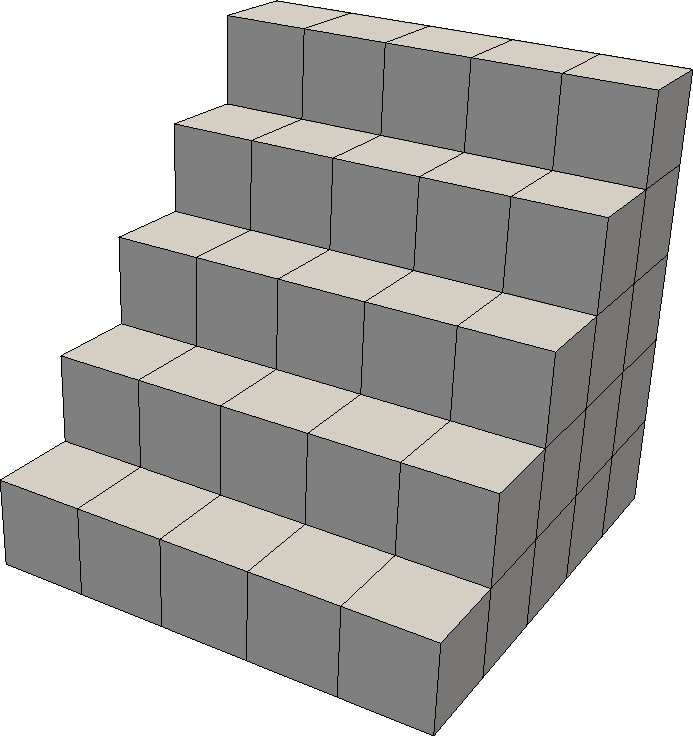} &     
\includegraphics[width=0.37\textwidth]{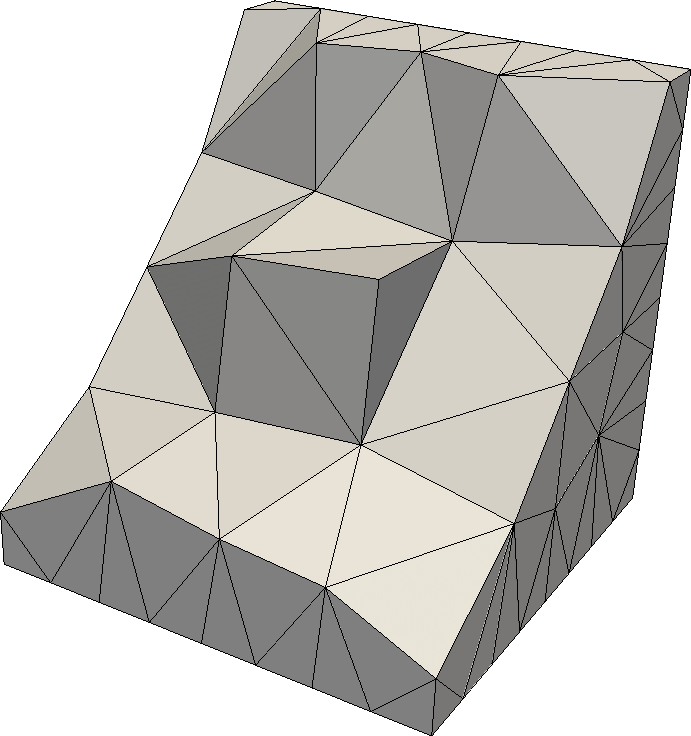} \\ 
(a) & (b) \\
\includegraphics[width=0.37\textwidth]{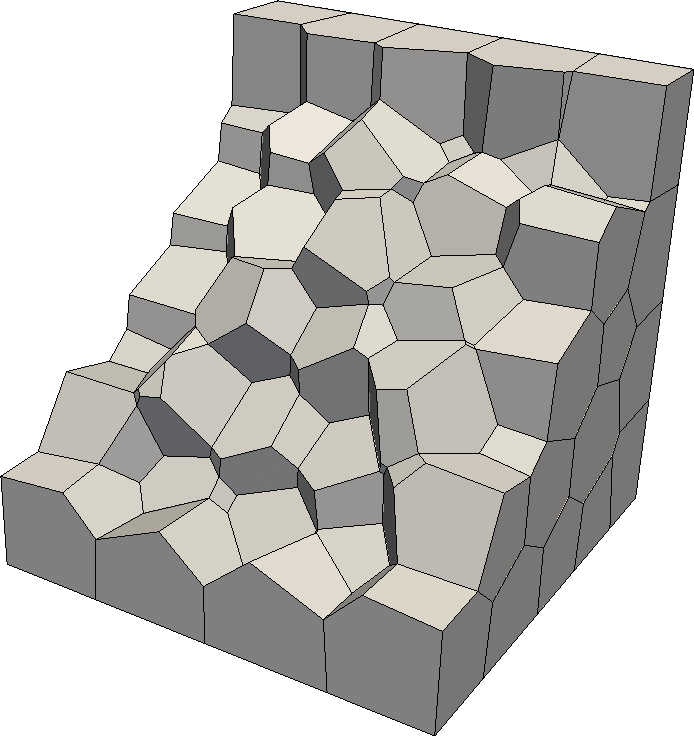} &           
\includegraphics[width=0.37\textwidth]{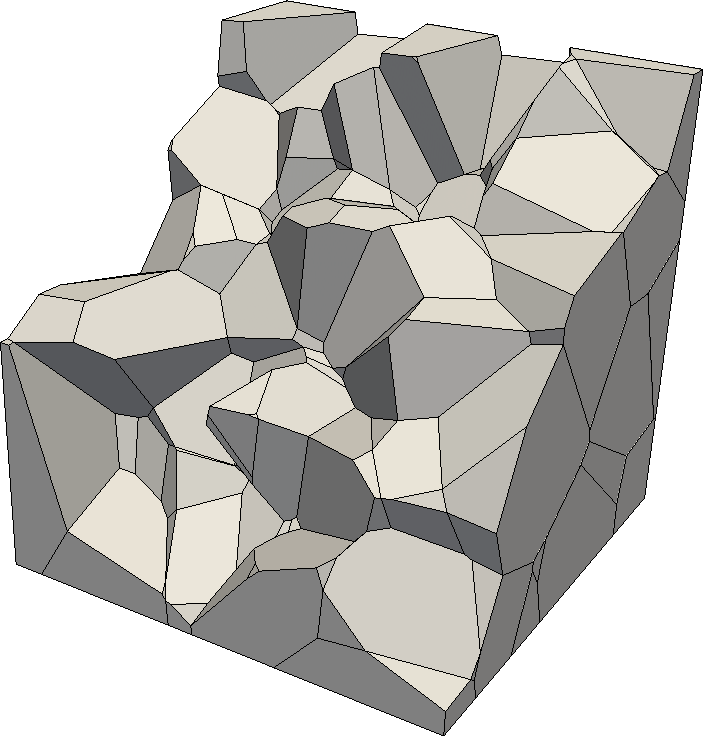} \\  
(c) & (d) \\ 
\end{tabular}
\end{center}
\caption{Adopted mesh types: (a) structured, (b) tetra, (c) CVT and (d) random.}
\label{fig:meshes}
\end{figure}

We would like to underline that the last type of mesh will severely test the robustness of the proposed method.
Indeed, \textbf{Random} meshes are characterized by elements whose faces can be very small and distorted, 
see the detail in Figure~\ref{fig:meshes} (d).

The tetrahedral meshes are generated via \texttt{tetgen}~\cite{si2015tetgen}, 
while the last two are obtained by exploiting the c++ library \texttt{voro++}~\cite{voroPlusPlus}.
To analyze the error convergence rate,
we make, for each family, a sequence of four meshes with decreasing size.
For each mesh we define the mesh-size as
$$
h := \frac{1}{L_P}\sum_{P\in\Omega_h} h_P\,.
$$

Let $(\uu,p)$ and $(\uu_h,p_h)$ be the continuous and discrete VEM solution of the Stokes (or Navier-Stokes problem) under study.
To evaluate how this discrete solution is close to the exact one,
we use the following error measures, that make use of the local projection described in Proposition~\ref{prp:proj}:
\begin{itemize}
\item \textbf{$H^1$--velocity error}:
$$
e_{H^1}^{\uu} := \sqrt{\sum_{P\in\Omega_h}\big|\big| \nabla\uu - {\boldsymbol \Pi}_{k-1}^{0, P}\,\nabla\uu_h \big|\big|^2_{L^2(P)}}\,,
$$
the theoretical expected convergence rate is $h^k$ (cf. \eqref{eq:thm:u});
\item \textbf{$L^2$--pressure error}:
$$
e_{L^2}^p := \sqrt{\sum_{P\in\Omega_h}\big|\big|p - p_h \big|\big|^2_{L^2(P)}}\,,
$$
the expected rate is $h^{k}$ (cf. \eqref{eq:p-est}).
\end{itemize}

\subsection{Numerical tests}

In this subsection we consider three different tests. 
In the first two examples, 
we numerically verify the theoretical trend of all the errors
for a Stokes and Navier--Stokes problem.
Finally, we propose two benchmark examples for the Stokes equation with the property of
having the velocity solution in the discrete space $\VV_h$.
It is well known that classical mixed finite element methods lead in this situations to significant velocity errors, stemming from the velocity/pressure coupling in the error estimates. This effect is greatly reduced by the
presented methods (cf. Theorem \ref{thm:u}, estimate \eqref{eq:thm:u}).

\paragraph{Example 1 (Stokes problem).}
In this section we solve the Stokes problem on the unit cube $[0\,,1]^3$,
the discreted version being as in~\eqref{eq:ns virtual} but 
without the trilinear form $c_h(\cdot\,;\cdot,\cdot)$.
We consider Neumann homogeneous boundary conditions on the faces associated with the planes $x=0$ and $x=1$.
The load term and the Dirichlet boundary conditions on the remaining faces are chosen in such a way that the exact solution is 
$$
\uu(x,\,y,\,z):=
\left(
\begin{array}{r}
\phantom{-2}\sin(\pi x)\,\cos(\pi y)\,\cos(\pi z)\\
\phantom{-2}\cos(\pi x)\,\sin(\pi y)\,\cos(\pi z)\\
{-2}\cos(\pi x)\,\cos(\pi y)\,\sin(\pi z)
\end{array}
\right)
$$
and
$$
p(x,\,y,\,z):= -\pi\,\cos(\pi x)\,\cos(\pi y)\,\cos(\pi z)\,.
$$

We consider the \textbf{Structured}, \textbf{CVT} and \textbf{Random} meshes.
In Figure~\ref{fig:ese1Conv} we show the behaviour of the errors $e_{H^1}^{\uu}$ and $e_{L^2}^p$.
The slope of such errors are the expected ones, $O(h^k)$ see Theorem~\ref{thm:u}.
Moreover, for each approximation degree $k$ the convergence lines associated with different meshes are close to each other and 
this represents a numerical evidence that the proposed method is robust with respect to the adopted meshes.

\begin{figure}[!htb]
\begin{center}
\begin{tabular}{cc}
\includegraphics[height=0.37\textwidth]{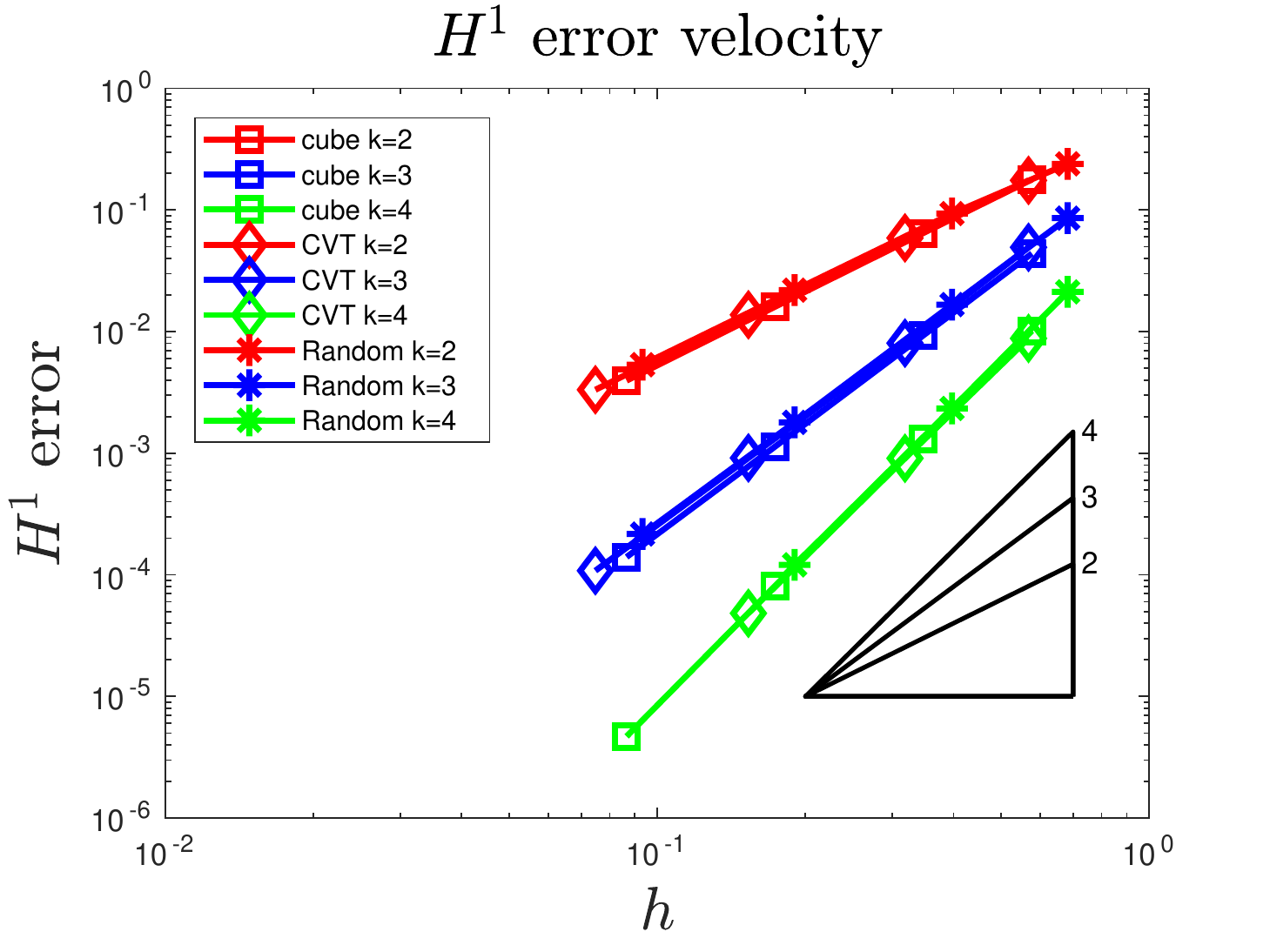} &
\includegraphics[height=0.37\textwidth]{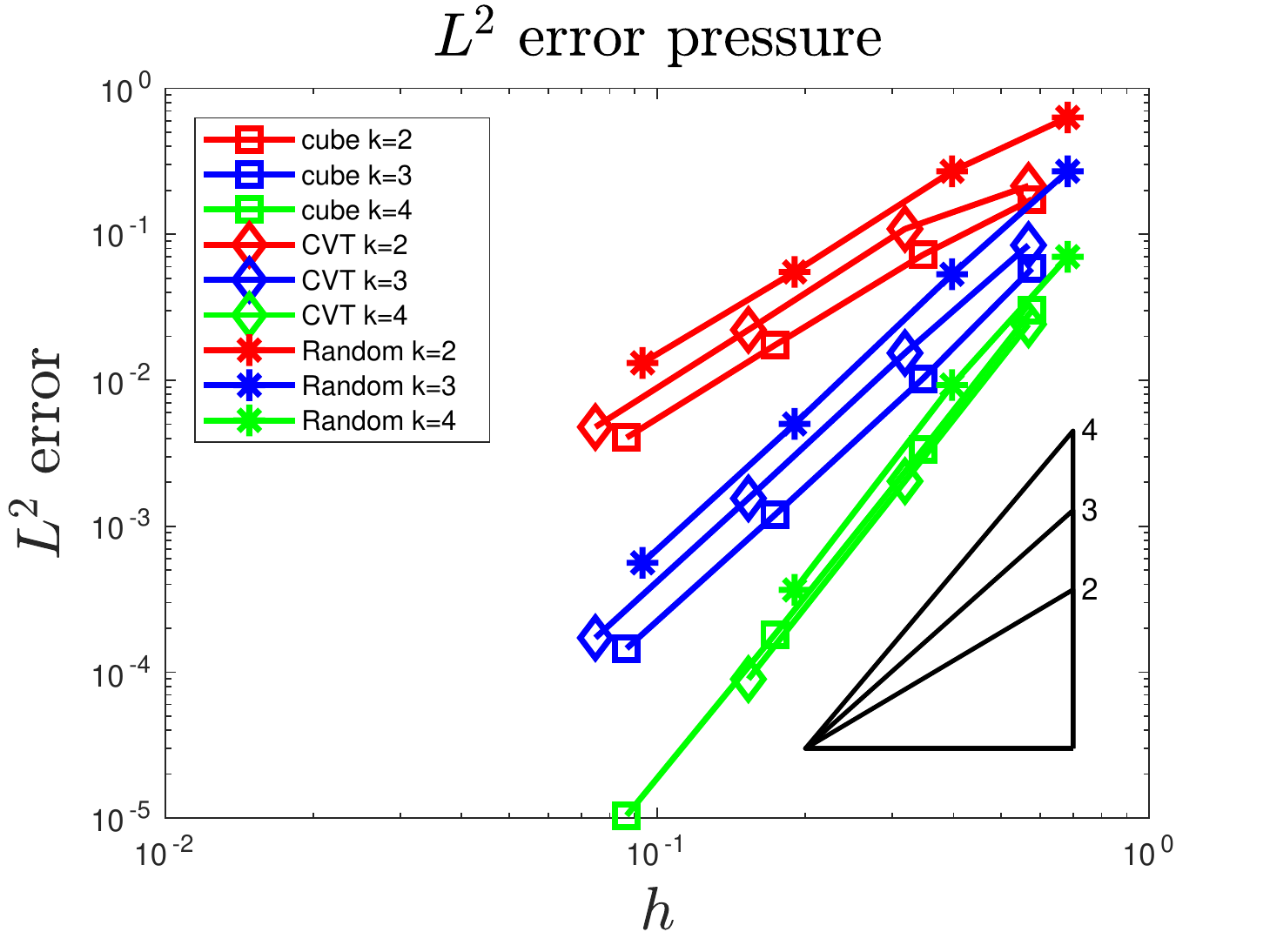}\\
\end{tabular}
\end{center}
\caption{Example 1 Stokes problem: convergence lines for \textbf{Structured}, \textbf{CVT} and \textbf{Random} meshes 
and degrees $k=2,3$ and 4.}
\label{fig:ese1Conv}
\end{figure}

\paragraph{Example 2 (Navier--Stokes problem).}
In this subsection we consider the Navier--Stokes problem described in Equation~\eqref{eq:ns virtual} with Dirichlet boundary conditions.
We consider the same discretization of the unit cube of the previuos example, i.e. the set of meshes \textbf{Structured}, \textbf{CVT} and \textbf{Random}.
We define the right hand side and the boundary conditions in such a way that the exact solution is 
$$
\uu(x,\,y,\,z):=
\left(
\begin{array}{r}
\phantom{-2}\sin(\pi x)\,\cos(\pi y)\,\cos(\pi z)\\
\phantom{-2}\cos(\pi x)\,\sin(\pi y)\,\cos(\pi z)\\
{-2}\cos(\pi x)\,\cos(\pi y)\,\sin(\pi z)
\end{array}
\right)
$$
and
$$
p(x,\,y,\,z):= \sin(2\pi x)\,\sin(2\pi y)\,\sin(2 \pi z)\,.
$$

We solve the nonlinear problem by using standard Newton-Rapson iterations with a stopping criterion based 
on the displacement convergence test error with a tolerance $\texttt{tol=1e-10}$, i.e. until 
$||\X_n-\X_{n+1}||<\texttt{tol}\,||\X_n||$
where $\X_n$ refers to the solution at the $n$-step. 
In Figure~\ref{fig:ese2Conv} we show the convergence lines of the $H^1$ error on the velocity and the $L^2$ error on the pressure, respectively. 
In all these cases we have the predicted trend: $h^{k}$ for the velocity and $h^k$ for the pressure, see Theorem~\ref{thm:u}.
Moreover, also in this case the lines are close to each other varying the mesh discretization, expecially for the velocity solution. Note that, for the pressure solution and random meshes, higher order case $k=3$, there seems to be a loss of accuracy at the second step. We believe this is due to difficulties related to the Newton convergence iterates (the associated linear system  getting quite badly conditioned) since random meshes have a very bad geometry and we are reaching near the memory limit of our platform. We where unable to run a further step due to memory limits. Improving this aspect, possibly by exploring more advanced solvers or changing the adopted virtual element basis \cite{dassi-mascotto:2018}, is beyond the scope of the present paper.  

\begin{figure}[!htb]
\begin{center}
\begin{tabular}{cc}
\includegraphics[height=0.37\textwidth]{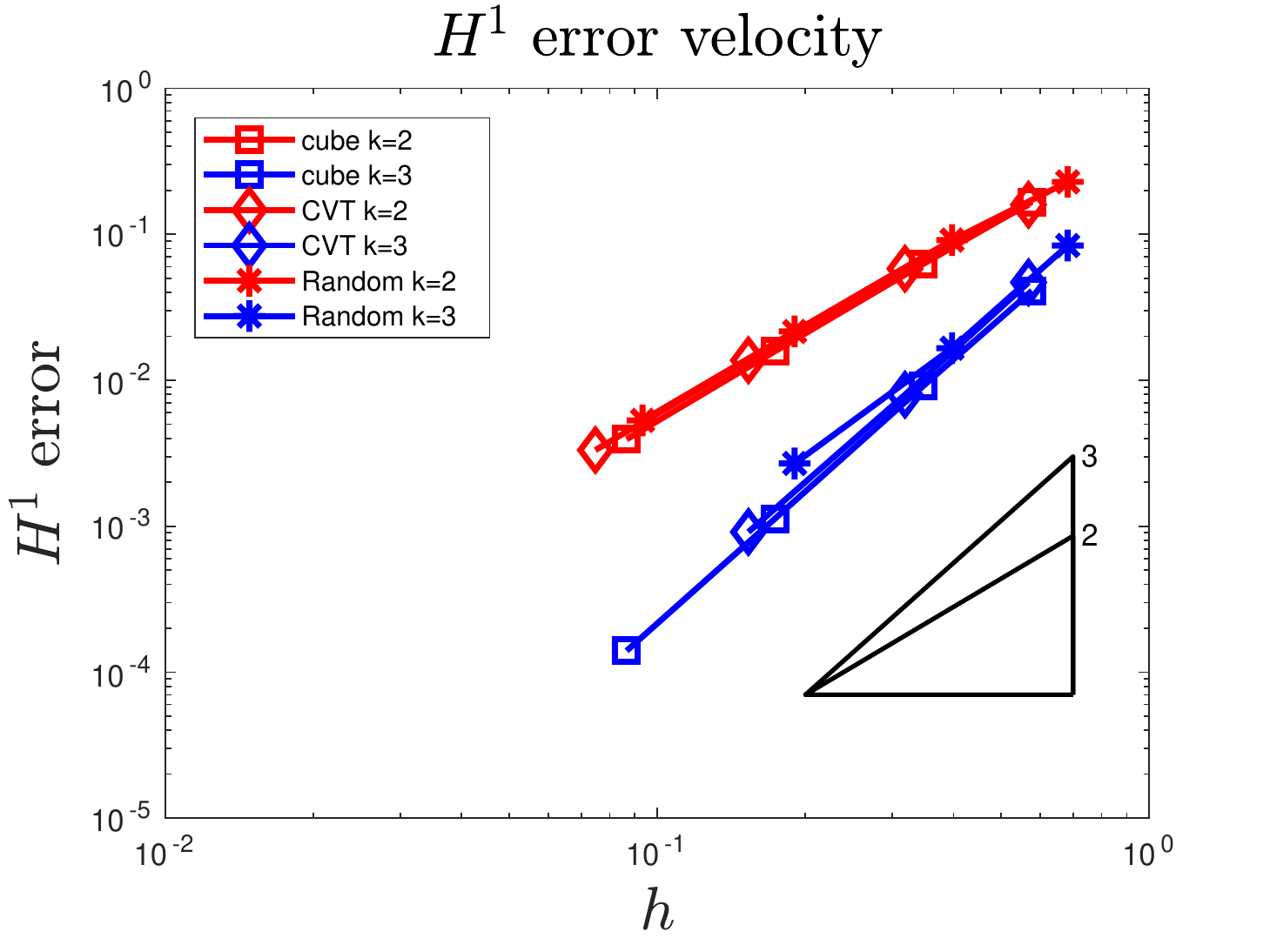}&
\includegraphics[height=0.37\textwidth]{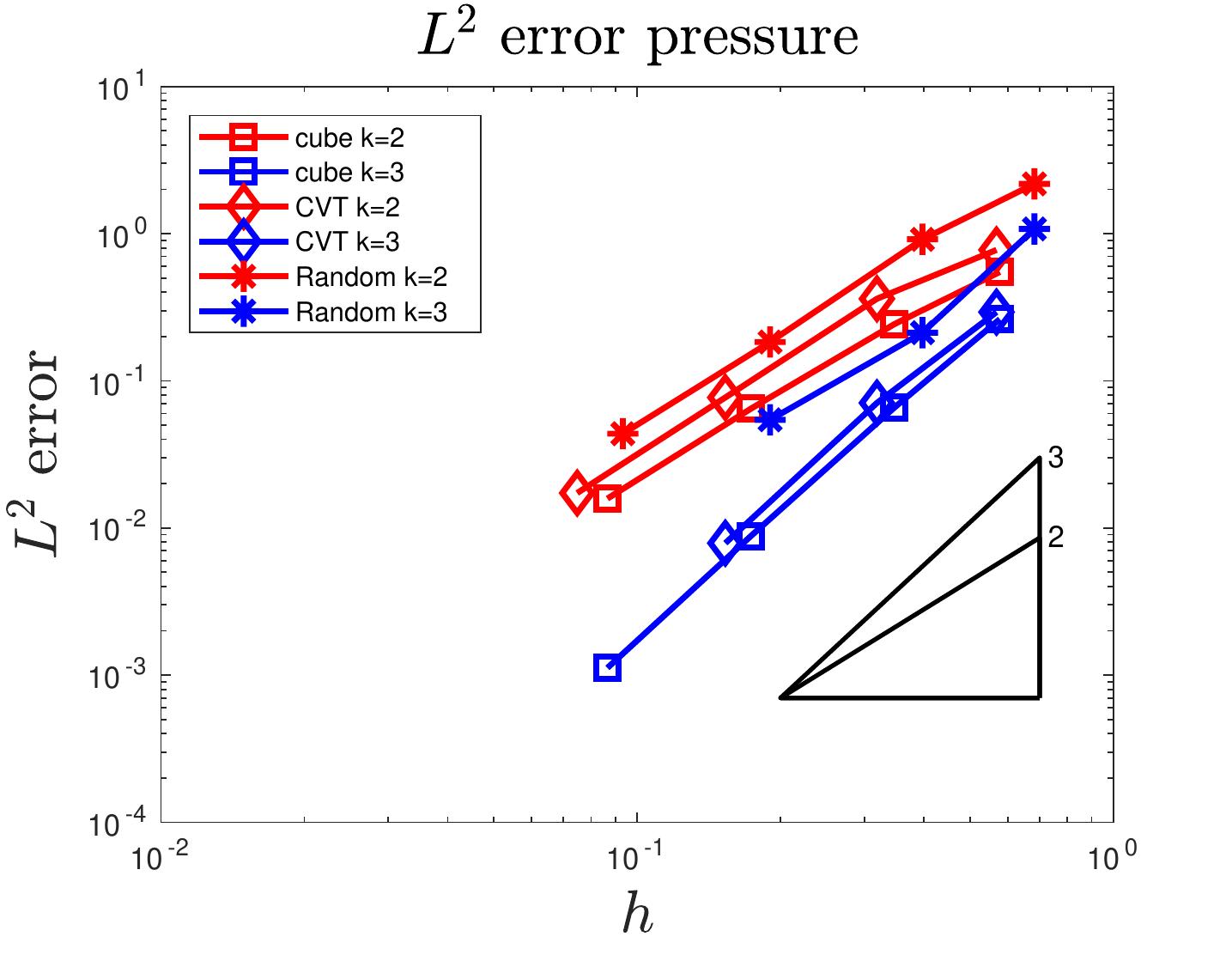}\\
\end{tabular}
\end{center}
\caption{Example 2 Navier--Stokes problem: convergence line for \textbf{Structured}, \textbf{CVT} and \textbf{Random} meshes 
and degrees $k=2$ and 3.}
\label{fig:ese2Conv}
\end{figure}

\paragraph{Example 3 (Benchmark Problems).}
In this paragraph, inspired by \cite{benchmark}, we consider a particular example to numerically show the an advantage of the proposed method.
It is well known that the error on the velocity field of standard inf-sup stable elements for the Stokes equation 
is pressure dependent~\cite{boffi-brezzi-fortin:book}. 
Consequently, the accuracy of the discrete solution $\uu_h$ is affected by the discrete pressure error.
As already shown for the two-dimensional case in~\cite{BLV:2017}, also 
in the three-dimensional case we do not have such dependency on the error, i.e.
the error on the discrete velocity field $\uu_h$ does not depend on the pressure, but 
\emph{only} on the velocity $\uu$ and on the load term $\ff$ (see Theorem~\ref{thm:u}, estimate \eqref{eq:thm:u}).
Note that the present method, although div-free, is not pressure-robust in the sense of~\cite{benchmark} since 
the error on the velocities is indirectly affected by the pressure through the loading approximation term~\cite{BLV:2017}.
Nevertheless it is still much better then the inf-sup stable element in this respect, as the accuracy of the load approximation 
(being a known quantity) can be easily improved.

To numerically verify such property we consider two Stokes problems where 
the exact velocity field is contained in $\textbf{V}_h$ 
\begin{equation*}
\uu(x,\,y,\,z):=
\left(
\begin{array}{c}
k\,x\,z^{k-1}\\
k\,y\,z^{k-1}\\
(2-k)\,x^k+(2-k)\,y^k-2\,z^k
\end{array}
\right)\,,
\end{equation*}
where $k$ is the VEM approximation degree, but we vary the solution on the pressure.
More specifically we will consider these two pressure solutions: 
a polynomial pressure
$$
p_1(x,\,y,\,z) := x^k\,y+y^k\,z+z^k\,x-\frac{3}{2(k+1)}\,,
$$
and an analytic pressure 
$$
p_2(x,\,y,\,z) := \sin(2\pi x)\,\sin(2\pi y)\,\sin(2\pi z)\,.
$$

Note that in both cases, since $p_i\not\in Q_h$ for $i=1,2$, 
a standard inf-sup stable element of analogous polynomial degree would obtain $O(h^k)$ error for the velocities in the $H^1$ norm
even if $\uu\in\textbf{V}_h$.
In the first case, the velocity is a polynomial vector field of degree $k$,
while the pressure is a polynomial of degree $k$ and 
the load term $\ff$ is a polynomial of degree $k$.
In such configuration the presented VEM scheme yields the exact solution up to machine precision for the velocity field.
Indeed, the velocity virtual element space contains polynomials of degree $k$ and,
since the load term is a polynomial of degree $k$, 
the term $\mathcal{H}(\ff,\nu)$ in Equation~\eqref{eq:thm:u} is close to the machine precision, i.e. 
we approximate exactly the load term $\ff$ (cf. Definition \eqref{eq:fhP}),
so the error on $\uu$ is close to the machine precision.

In table of Figure~\ref{fig:ese3Poly} left, we collect the errors $e_{H^1}^{\uu}$ only for the coarsest meshes 
composed by 27 and 68 elements for the \textbf{Structured} and \textbf{Tetra} meshes, respectively.

\begin{figure}[!htb]
\begin{minipage}{0.5\textwidth}
\begin{center}
\begin{tabular}{|c|c|c|}
\multicolumn{3}{c}{$H^1$ error velocity}\\
\hline
$k$ &\textbf{Structured} &\textbf{Tetra} \\
\hline
\texttt{2} &\texttt{1.0576e-13} &\texttt{7.2075e-13} \\
\texttt{3} &\texttt{2.7333e-13} &\texttt{1.1927e-12} \\
\texttt{4} &\texttt{1.5266e-12} &\texttt{2.2718e-10} \\
\hline
\end{tabular}
\end{center}
\end{minipage}
\begin{minipage}{0.5\textwidth}
\begin{center}
\includegraphics[width=1.0\textwidth]{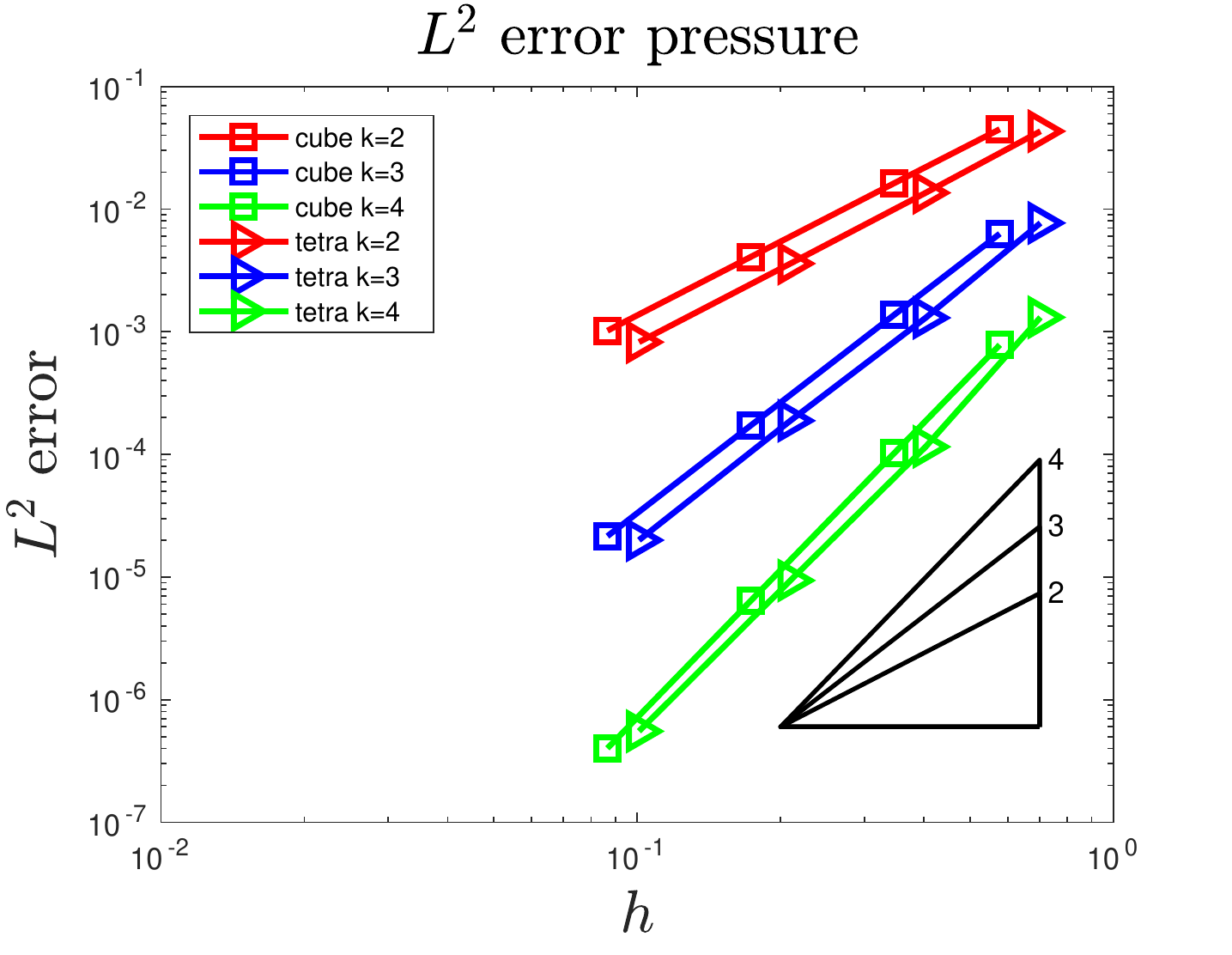}
\end{center}
\end{minipage}

\caption{Example 3 Benchmark problem: the values of the errors $e_{H^1}^{\uu}$ for the coarsest meshes of \textbf{Structured} and 
\textbf{Tetra} meshes, left, pressure convergence lines, right, for a Stokes problem where we consider $p_1$ as pressure.}
\label{fig:ese3Poly}
\end{figure}

In the second case the velocity is still a polynomial of degree $k$, but,
since the pressure is a sinusoidal function, now the right hand side $\ff$ is not a polynomial.
Even if the velocity virtual element space contains the polynomial of degree $k$,
the error is affected by the term $\mathcal{H}(\ff,\nu)$ in Equation~\eqref{eq:thm:u}, i.e.
it is affected by the polynomial approximation we make of the load term so 
the expected error for $e_{H^1}^{\uu}$ is $h^{k+2}$, which is still much better than $O(h^k)$.

In Figure~\ref{fig:ese3Sin} we show the convergence lines for both $e_{H^1}^{\uu}$ and $e_{L^2}^{p}$.
The error trends are the expected ones: we get $O(h^4)$, $O(h^5)$ and $O(h^6)$ for degrees $k=2,3$ and 4, respectively,
while we get $O(h^k)$ for the pressure. In the last step of the $H^1$ norm error, the error is higher than expected  
(this behaviour is  due to machine algebra effects since we are in a range of very small errors).

\begin{figure}[!htb]
\begin{center}
\begin{tabular}{cc}
\includegraphics[height=0.37\textwidth]{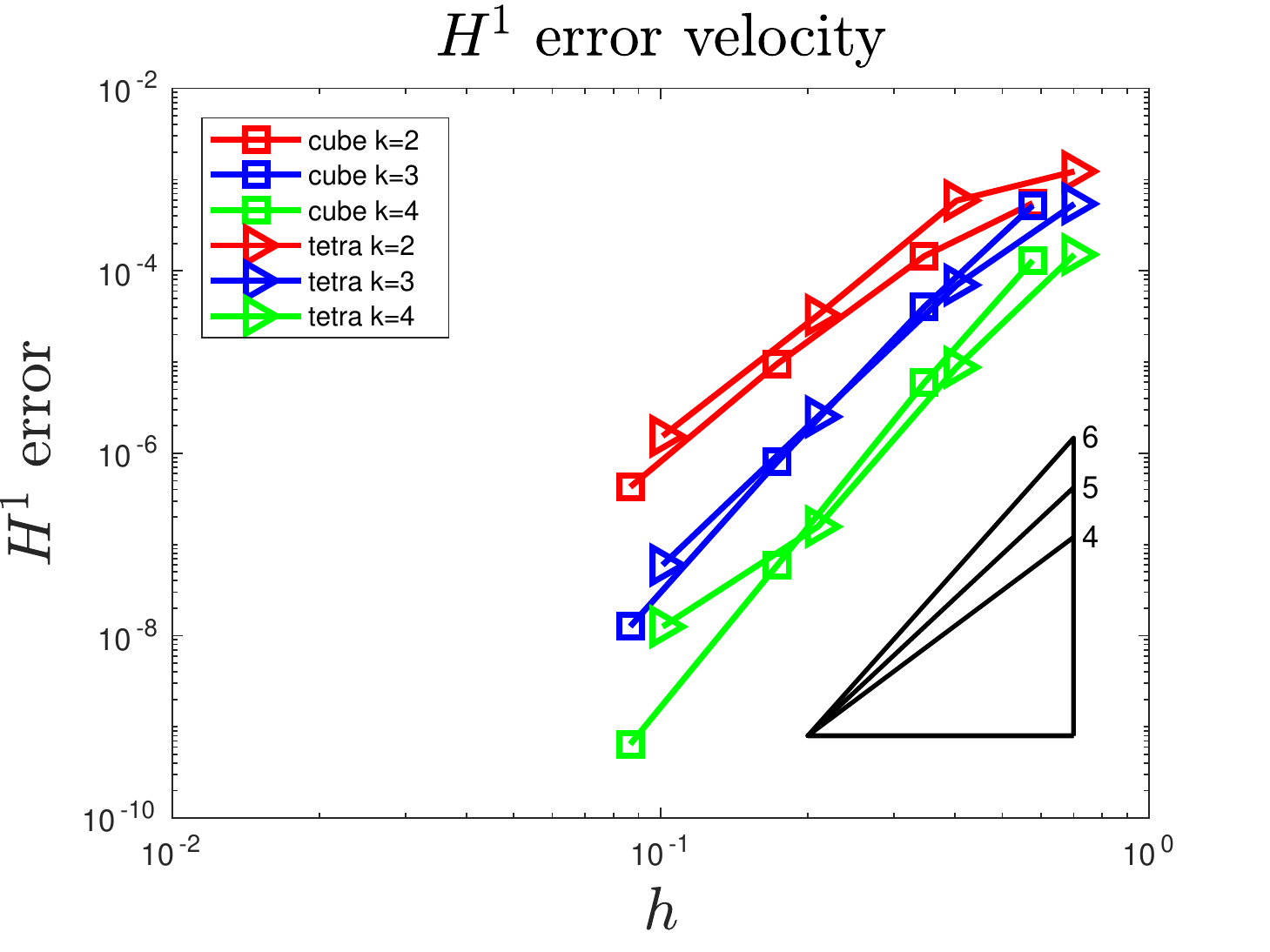}&
\includegraphics[height=0.37\textwidth]{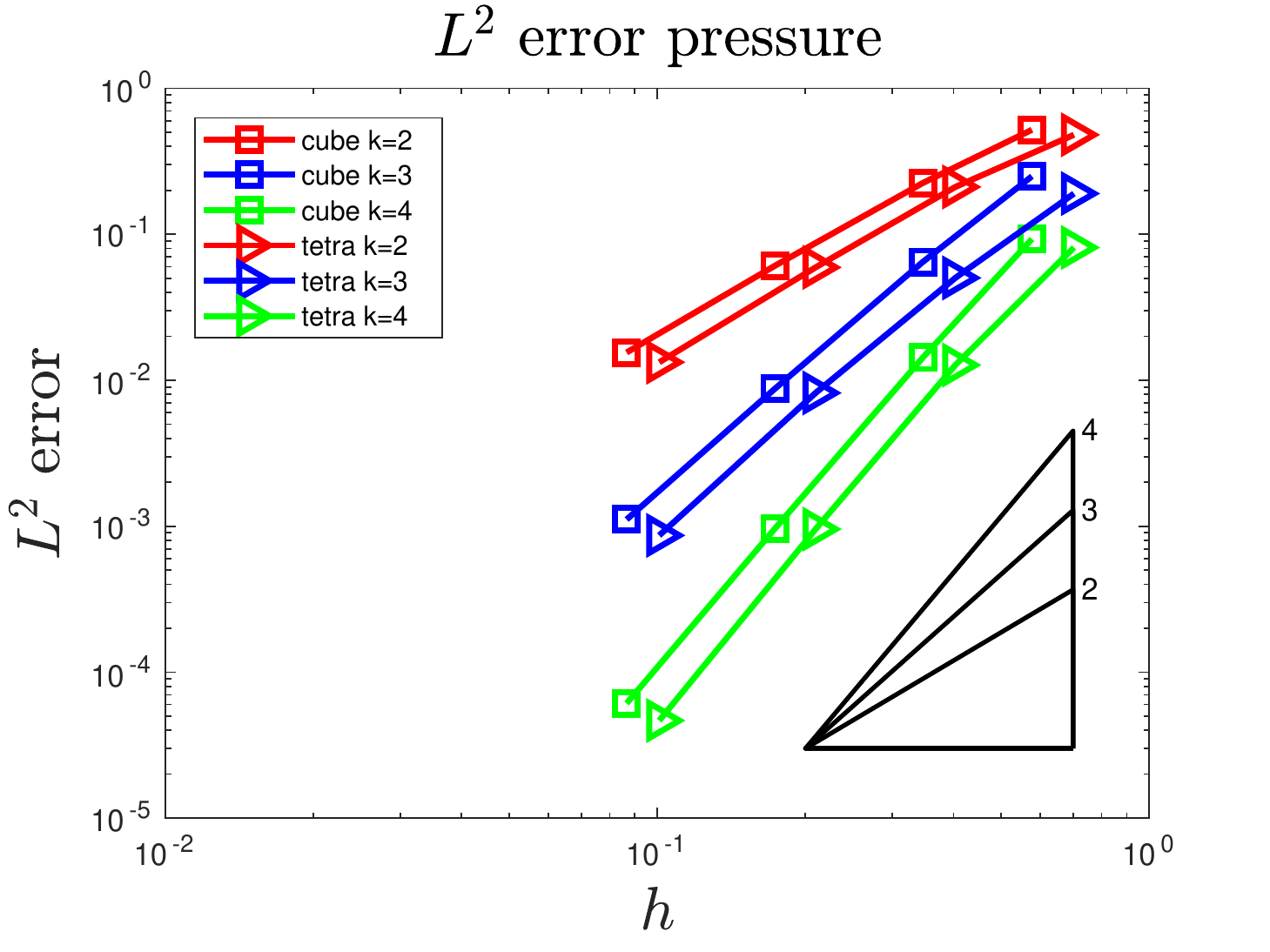}\\
\end{tabular}
\end{center}
\caption{Example 3 Benchmark: convergence lines for a Stokes problem with \textbf{Structured} and
\textbf{Tetra} meshes where we consider a sinusoidal pressure function, $p_2$.}
\label{fig:ese3Sin}
\end{figure}

\section*{Appendix}

The aim of this appendix is addressing the well-posedness of the biharmonic problem with the non homogeneous  boundary conditions stated in Definition \eqref{eq:Sg_h^P}.
Indeed, although in the literature one can find many references for the homogeneous case \cite{bendali-dominguez-gallic:1985, amrouche-et-al:1998, girault-raviart:book}, to the authors best knowledge the extension to the non homogeneous case is labeled as feasible but never explicited. For completeness, we here provide the details.

We first recall that the space ${\boldsymbol \Psi}(P)$ is provided with the norm \cite{girault-raviart:book}:
\[
\|\ppsi\|_{{\boldsymbol \Psi}(P)}^2 :=
\|\ppsi\|_{0, P}^2 + \|\CC \, \ppsi\|_{1, P}^2 + \|\dd \, \ppsi\|_{1, P}^2 \,.
\]
Moreover, if $P$ is a contractible polyhedron the following bounds hold (Lemma 5.2 \cite{girault-raviart:book})
\begin{equation}
\label{eq:norm_equivalence}
\|\ppsi\|_{0, P}^2 + \|\dl \, \ppsi\|_{0, P}^2 
\lesssim
\|\ppsi\|_{{\boldsymbol \Psi}(P)}^2
\lesssim 
\|\dl \, \ppsi\|_{0, P}^2 
\qquad \text{for all $ \ppsi \in {\boldsymbol \Psi}_0(P)$.}
\end{equation}

We start our analysis by recalling the following result concerning the case of homogeneous boundary conditions (see Lemma 5.1 \cite{girault-raviart:book}).
\begin{lemma}
\label{lm:girault-raviart}
Let $P$ be a contractible polyhedron and let ${\boldsymbol F} \colon {\boldsymbol \Psi}_0(P) \to \R$ be a given continuous functional.
The biharmonic problem coupled with homogeneous boundary conditions
\begin{equation*}
\left \{
\begin{aligned}
& \text{find $\ffi  \in \boldsymbol{\Psi}_0(P)$, such that} \\
& \int_P \dl \, \ffi \cdot \dl \, \ppsi \, {\rm d}P
 = {\boldsymbol F}(\ppsi)
 \qquad & \text{for all $ \ppsi \in \boldsymbol{\Psi}_0(P)$,} 
\end{aligned}
\right .
\end{equation*}
has a unique solution $\ffi$.
\end{lemma}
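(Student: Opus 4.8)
The plan is to recognize the weak problem as the variational statement associated with a continuous, coercive, symmetric bilinear form on the Hilbert space $\boldsymbol{\Psi}_0(P)$, and then to invoke the Lax--Milgram lemma; since the form is symmetric, the Riesz representation theorem would equally suffice. First I would observe that $\boldsymbol{\Psi}_0(P)$, equipped with the inner product inducing $\|\cdot\|_{\boldsymbol{\Psi}(P)}$, is a Hilbert space: it is the subspace of $\boldsymbol{\Psi}(P)$ cut out by the homogeneous conditions $\int_{\partial P}\vv\cdot\nn_P\,{\rm d}f=0$, $\vv_{\tau}={\boldsymbol 0}$ and $\CC\,\vv=\mathbf{0}$ on $\partial P$, all of which are prescribed through continuous trace maps, so the resulting subspace is closed.

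Next I would introduce the symmetric bilinear form
\[
a(\ffi,\ppsi):=\int_P \dl\,\ffi\cdot\dl\,\ppsi\,{\rm d}P
\]
and verify its two structural properties directly from the norm equivalence \eqref{eq:norm_equivalence}. Continuity is immediate: by Cauchy--Schwarz $|a(\ffi,\ppsi)|\le\|\dl\,\ffi\|_{0,P}\,\|\dl\,\ppsi\|_{0,P}$, and the left-hand bound in \eqref{eq:norm_equivalence} gives $\|\dl\,\cdot\|_{0,P}\lesssim\|\cdot\|_{\boldsymbol{\Psi}(P)}$, whence $|a(\ffi,\ppsi)|\lesssim\|\ffi\|_{\boldsymbol{\Psi}(P)}\,\|\ppsi\|_{\boldsymbol{\Psi}(P)}$. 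Coercivity is exactly the content of the right-hand bound in \eqref{eq:norm_equivalence}, which yields $a(\ppsi,\ppsi)=\|\dl\,\ppsi\|_{0,P}^2\gtrsim\|\ppsi\|_{\boldsymbol{\Psi}(P)}^2$ for every $\ppsi\in\boldsymbol{\Psi}_0(P)$.

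With continuity and coercivity established, Lax--Milgram applied to the continuous functional ${\boldsymbol F}$ produces a unique $\ffi\in\boldsymbol{\Psi}_0(P)$ satisfying $a(\ffi,\ppsi)={\boldsymbol F}(\ppsi)$ for all $\ppsi\in\boldsymbol{\Psi}_0(P)$, which is precisely the asserted well-posedness. Equivalently, since $a(\cdot,\cdot)$ is symmetric and positive definite, it is an inner product on $\boldsymbol{\Psi}_0(P)$ equivalent to the graph inner product, so the existence and uniqueness of $\ffi$ follows at once from Riesz representation.

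I do not expect a genuine obstacle, because the analytic heart of the matter---that $\|\dl\,\cdot\|_{0,P}$ already controls the full $\boldsymbol{\Psi}(P)$-norm on fields obeying the homogeneous boundary conditions, a Poincar\'e/Friedrichs-type inequality---has been supplied as \eqref{eq:norm_equivalence}. The only point requiring mild care is confirming that $\boldsymbol{\Psi}_0(P)$ is genuinely closed, so that coercivity on it is meaningful and Lax--Milgram is applicable; this reduces to the continuity of the normal- and tangential-trace operators on $\boldsymbol{\Psi}(P)$, which is standard.
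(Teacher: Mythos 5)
Your proof is correct and is essentially the intended argument: the paper does not prove this lemma itself but cites Lemma 5.1 of Girault--Raviart, whose proof is precisely this Lax--Milgram (equivalently Riesz) argument, with coercivity and continuity of $\int_P \dl\,\ffi\cdot\dl\,\ppsi\,{\rm d}P$ on the closed subspace $\boldsymbol{\Psi}_0(P)$ supplied by the norm equivalence \eqref{eq:norm_equivalence}. Nothing in your write-up is missing; the closedness of $\boldsymbol{\Psi}_0(P)$ via continuity of the normal and tangential traces (and of the trace of $\CC\,\ffi$, which lies in $[H^1(P)]^3$) is exactly the mild point that needs, and receives, attention.
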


The next theorem extends the well-posedness result of the previous lemma to the case of inhomogeneous boundary conditions. 

\begin{theorem}
\label{thm:wellposed}
Let $P$ be a contractible polyhedron and let
\begin{itemize}
\item ${\boldsymbol h} \in [L^2(\partial P)]^3$ such that  for any $f$, $f_1$, $f_2 \in \partial P$ and for any $e \subseteq f_1 \cap f_2$  
\[
{\boldsymbol h}_{\tau} \in  H(\dd_f, \, f) \cap H(\rr_f, \, f) 
\qquad \text{and} \qquad 
({\boldsymbol h}_{f_1} \cdot \tf_e)_{|e} = ({\boldsymbol h}_{f_2} \cdot \tf_e)_{|e} \,,
\]
\item ${\boldsymbol g} \in [H^{1/2}(\partial P)]^3$ such that
\begin{equation}
\label{eq:NS_bs_compatibility}
{\boldsymbol g} \cdot \nn_P^f = \rr_f \, {\boldsymbol h}_{\tau} \qquad \text{for any $f \in \partial P$,}
\end{equation}
\item ${\boldsymbol f} \in [L^2(P)]^3 \cap \ZZ(P)$.
\end{itemize}
%
%
The biharmonic problem coupled with the non homogeneous boundary conditions
\begin{equation}
\label{eq:NS_bs}
\left \{
\begin{aligned}
& \text{find $\ffi  \in \boldsymbol{\Psi}(P)$, such that} \\
& \int_P \dl \, \ffi \cdot \dl \, \ppsi \, {\rm d}P
 = \int_P {\boldsymbol f} \cdot  \ppsi \, {\rm d}P 
 \qquad & \text{for all $ \ppsi \in \boldsymbol{\Psi}_0(P)$,} \\
& \int_{\partial P} \ffi \cdot \nn_P \,{\rm d}f = 0 \,, \\
& \ffi_{\tau} = {\boldsymbol h}_{\tau}  \qquad & \text{on $\partial P$,} \\
& \CC \, \ffi = {\boldsymbol g}  \qquad & \text{on $\partial P$,}
\end{aligned}
\right .
\end{equation}
has a unique solution $\ffi$.
\end{theorem}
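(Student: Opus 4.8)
The plan is to reduce problem \eqref{eq:NS_bs} to the homogeneous case settled by Lemma \ref{lm:girault-raviart}, by subtracting a suitable lifting of the boundary data. Concretely, I would first construct a field $\ffi_0 \in \boldsymbol{\Psi}(P)$ satisfying the three boundary conditions in \eqref{eq:NS_bs}, that is $\int_{\partial P} \ffi_0 \cdot \nn_P \, {\rm d}f = 0$, $(\ffi_0)_{\tau} = {\boldsymbol h}_{\tau}$ and $\CC \, \ffi_0 = {\boldsymbol g}$ on $\partial P$. Setting $\ww := \ffi - \ffi_0$, the new unknown belongs to $\boldsymbol{\Psi}_0(P)$ and solves
\begin{equation*}
\int_P \dl \, \ww \cdot \dl \, \ppsi \, {\rm d}P = \int_P {\boldsymbol f} \cdot \ppsi \, {\rm d}P - \int_P \dl \, \ffi_0 \cdot \dl \, \ppsi \, {\rm d}P =: {\boldsymbol F}(\ppsi) \qquad \text{for all $\ppsi \in \boldsymbol{\Psi}_0(P)$.}
\end{equation*}
The functional ${\boldsymbol F}$ is continuous on $\boldsymbol{\Psi}_0(P)$: its first term is bounded by $\|{\boldsymbol f}\|_{0,P}\|\ppsi\|_{0,P}$ and the second by $\|\dl \, \ffi_0\|_{0,P}\|\dl \, \ppsi\|_{0,P}$, both controlled by $\|\ppsi\|_{{\boldsymbol \Psi}(P)}$ through the norm equivalence \eqref{eq:norm_equivalence} (here one uses $\dl \, \ffi_0 \in [L^2(P)]^3$, which is implied by $\ffi_0 \in \boldsymbol{\Psi}(P)$). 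Lemma \ref{lm:girault-raviart} then gives a unique $\ww$, hence existence of $\ffi = \ffi_0 + \ww$; uniqueness follows since the difference of two solutions of \eqref{eq:NS_bs} lies in $\boldsymbol{\Psi}_0(P)$ and solves the homogeneous problem with ${\boldsymbol F}\equiv 0$, hence vanishes by the same lemma.

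The construction of the lifting $\ffi_0$ is the core of the argument and the step I expect to be the main obstacle. I would build it following the structure of the complex \eqref{eq:exact}. First I produce the prescribed curl: since ${\boldsymbol g}\in[H^{1/2}(\partial P)]^3$, I look for $\zz \in [H^1(P)]^3$ with $\zz_{|\partial P} = {\boldsymbol g}$ and $\dd \, \zz = 0$ in $P$. Solvability of this divergence-constrained extension rests on the global compatibility $\int_{\partial P}{\boldsymbol g}\cdot\nn_P\,{\rm d}f = 0$, which I would verify by combining \eqref{eq:NS_bs_compatibility} with \eqref{eq:compatibility_rr_tf} to rewrite the flux as $\sum_{f}\int_{\partial f}{\boldsymbol h}\cdot\tf_f\,{\rm d}s$, and then noting that these edge contributions cancel in pairs thanks to the matching hypothesis $({\boldsymbol h}_{f_1}\cdot\tf_e)_{|e} = ({\boldsymbol h}_{f_2}\cdot\tf_e)_{|e}$ on shared edges. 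Because $\dd \, \zz = 0$ and $P$ is contractible, exactness of \eqref{eq:exact} furnishes a potential $\ffi_1\in\boldsymbol{\Psi}(P)$ with $\CC \, \ffi_1 = \zz$, so that $\CC \, \ffi_1 = {\boldsymbol g}$ on $\partial P$.

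Second, I would correct the tangential trace without altering the curl by adding a gradient. The potential $\ffi_1$ is fixed only up to $\nabla q$, and I want $\nabla_f q_{\tau} = {\boldsymbol h}_{\tau} - (\ffi_1)_{\tau}$ on each face $f$. The key point is that this datum is curl-free on every face: by \eqref{eq:compatibility_CC_rr} one has $\rr_f(\ffi_1)_{\tau} = \CC \, \ffi_1\cdot\nn_P^f = {\boldsymbol g}\cdot\nn_P^f$, which equals $\rr_f\,{\boldsymbol h}_{\tau}$ by \eqref{eq:NS_bs_compatibility}, whence $\rr_f({\boldsymbol h}_{\tau} - (\ffi_1)_{\tau}) = 0$. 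Thus on each face the datum is a surface gradient, the face potentials glue into a single-valued continuous trace on $\partial P$ (again using the edge-matching of ${\boldsymbol h}$), and extending it into $P$ yields $q$. Finally I fix the flux by adding $\nabla r$ with $r_{|\partial P} = 0$ (so neither the tangential trace nor the curl changes) and $\int_P \Delta r\,{\rm d}P$ chosen to cancel the residual flux $\int_{\partial P}(\ffi_1 + \nabla q)\cdot\nn_P\,{\rm d}f$. Throughout, the extensions must be picked regular enough that $\ffi_0 := \ffi_1 + \nabla q + \nabla r \in \boldsymbol{\Psi}(P)$; since $\CC \, \ffi_0 = \zz \in [H^1(P)]^3$ is automatic, the delicate point is to arrange $\dd \, \ffi_0 \in H^1(P)$, which I would ensure by selecting harmonic (or otherwise smooth-divergence) extensions so that the divergence part is controlled by construction.
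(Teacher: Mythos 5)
Your proposal is correct and follows essentially the same route as the paper: you lift the boundary data in two stages---first a divergence-free $H^1$ extension of ${\boldsymbol g}$ (the paper realizes this as the velocity of an auxiliary Stokes problem) followed by a vector potential, then a gradient correction for the tangential trace obtained by integrating the face-wise curl-free datum ${\boldsymbol h}_{\tau}-(\ffi_1)_{\tau}$ and taking a harmonic extension---and you then reduce to the homogeneous problem handled by Lemma \ref{lm:girault-raviart}, with continuity of the functional and uniqueness both resting on the norm equivalence \eqref{eq:norm_equivalence}. The only deviations are refinements the paper leaves implicit, namely your explicit check of the flux compatibility $\int_{\partial P}{\boldsymbol g}\cdot\nn_P\,{\rm d}f=0$ via the edge-matching hypothesis and the additional $\nabla r$ correction of the mean flux.
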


\begin{proof}
Let us consider the following auxiliary problem
\begin{equation}
\label{eq:NS_bs_partial}
\left \{
\begin{aligned}
& \text{find $\ffi^{\partial} \in \boldsymbol{\Psi}(P)$, such that} \\
& 
\begin{aligned}
& \int_{\partial P} \ffi^{\partial} \cdot \nn_P \, {\rm d}f = 0 \,, \\
& \ffi_{\tau}^{\partial} = {\boldsymbol h}_{\tau}  \qquad & \text{on $\partial P$,} \\
& \CC \, \ffi^{\partial} = {\boldsymbol g}  \qquad & \text{on $\partial P$.}
\end{aligned}
\end{aligned}
\right .
\end{equation}
We construct by hand a suitable $\ffi^{\partial}$ that satisfies \eqref{eq:NS_bs_partial}.

Let us consider the Stokes-type problem defined on $P$
\begin{equation}
\label{eq:NS_bs_stokes}
\left\{
\begin{aligned}
& \text{find $(\uu, p) \in [H^1(P)]^3 \times L^2_0(P)$, such that} \\
&
\begin{aligned}
& - \dl \, \uu + \nabla p = {\boldsymbol 0} 
\qquad & \text{in $P$}
\\
&  \dd \, \uu = 0 
\qquad & \text{in $P$}
\\
& \uu = {\boldsymbol g} 
\qquad & \text{on $\partial P$}
\end{aligned}
\end{aligned}
\right.
\end{equation}
then by Theorem 3.4 \cite{girault-raviart:book}, 
there exists a vector potential $\ffi^{\boldsymbol g}$ (possibly not unique) satisfying 
\begin{equation}
\label{eq:NS_bs_ffig}
\left\{
\begin{aligned}
& \text{$\ffi^{\boldsymbol g} \in {\boldsymbol \Psi}(P)$, such that} \\
&
\begin{aligned}
& \CC \, \ffi^{\boldsymbol g}  = \uu 
\qquad & \text{in $P$,}
\\
&  \dd \, \ffi^{\boldsymbol g} = 0 
\qquad & \text{in $P$.}
\end{aligned}
\end{aligned}
\right.
\end{equation}
Moreover \eqref{eq:NS_bs_ffig} implies that $-\dl \, \ffi^{\boldsymbol g} = \CC \, \uu$, thus the following stability estimate holds \cite{boffi-brezzi-fortin:book}
\begin{equation}
\label{eq:NS_bs_ffig_stability}
\|\dl \, \ffi^{\boldsymbol g}\|_{0, P} 
= \|\CC \, \uu\|_{0, P} 
\leq \| \uu\|_{1, P}
\lesssim |{\boldsymbol g}|_{1/2, \partial P} \,.
\end{equation}

Notice that from \eqref{eq:NS_bs_compatibility},  \eqref{eq:compatibility_CC_rr},  \eqref{eq:NS_bs_ffig} and \eqref{eq:NS_bs_stokes}, 
on each face $f \in \partial P$, we infer
\[
\rr_f ({\boldsymbol h}  - \ffi^{\boldsymbol g})_{\tau} = 
\rr_f \, {\boldsymbol h}_{\tau} - \rr_f \, \ffi^{\boldsymbol g}_{\tau} =
{\boldsymbol g} \cdot \nn_P^f - (\CC \, \ffi^{\boldsymbol g})_{|f}\cdot \nn_P^f = 0 \,.
\]
Therefore it can be shown that there exists $\zeta \in H^1(\partial P)$ such that
\begin{equation}
\label{eq:NS_bs_face}
\gr_f \, \zeta_{\tau} = ({\boldsymbol h}  - \ffi^{\boldsymbol g})_{\tau}
\qquad \text{on any $f \in \partial P$.}
\end{equation}
Now we consider the elliptic problem
\begin{equation*}
\left \{
\begin{aligned}
& \Delta \, \omega = 0 \qquad \text{in $P$,} \\
& \omega = \zeta \qquad \text{on $\partial P$.}
\end{aligned}
\right .
\end{equation*}
We observe that $\ffi^{\boldsymbol h} := \gr \, \omega$ satisfies, also recalling \eqref{eq:NS_bs_face},
\begin{equation}
\label{eq:NS_bs_ffih}
\left\{
\begin{aligned}
& \text{$\ffi^{\boldsymbol h} \in {\boldsymbol \Psi}(P)$, such that} \\
&
\begin{aligned}
& \CC \, \ffi^{\boldsymbol h}  = {\boldsymbol 0} 
\qquad & \text{in $P$,}
\\
&  \dd \, \ffi^{\boldsymbol h} = 0 
\qquad & \text{in $P$,}
\\
& \ffi^{\boldsymbol h}_{\tau} = {\boldsymbol h} - \ffi_{\tau}^{\boldsymbol g}
\qquad & \text{on $\partial P$.}
\end{aligned}
\end{aligned}
\right.
\end{equation}
From \eqref{eq:NS_bs_ffih} it holds that
\begin{equation}
\label{eq:NS_bs_ffih_stability}
\dl \, \ffi^{\boldsymbol h} = {\boldsymbol 0} \,.
\end{equation}
By construction $\ffi^{\partial} := \ffi^{\boldsymbol g} + \ffi^{\boldsymbol h}$ satisfies \eqref{eq:NS_bs_partial} and from \eqref{eq:NS_bs_ffig_stability} and \eqref{eq:NS_bs_ffih_stability} it holds that
\begin{equation}
\label{eq:NS_bs_ffip_stability}
\| \dl \, \ffi^{\partial}\|_{0, P} \lesssim |{\boldsymbol g}|_{1/2, \partial P} \,.
\end{equation}
We consider now the  homogeneous auxiliary problem
\begin{equation}
\label{eq:NS_bs_0}
\left \{
\begin{aligned}
& \text{find $\ffi^{\rm hom}  \in \boldsymbol{\Psi}_0(P)$, such that} \\
& \int_P \dl \, \ffi^{\rm hom}\cdot \dl \, \ppsi \, {\rm d}P
 = \int_P {\boldsymbol f} \cdot  \ppsi \, {\rm d}P
  -  \int_P \dl \, \ffi^{\partial} \cdot \dl \, \ppsi \, {\rm d}P
 \qquad & \text{for all $ \ppsi \in \boldsymbol{\Psi}_0(P)$.} 
\end{aligned}
\right .
\end{equation}
Being ${\boldsymbol f} \in [L^2(P)]^3$, from \eqref{eq:NS_bs_ffip_stability}, \eqref{eq:norm_equivalence} and Lemma \ref{lm:girault-raviart}, Problem \eqref{eq:NS_bs_0} has a unique solution $\ffi^{\rm hom}  \in \boldsymbol{\Psi}_0(P)$.
It is straightforward to see that $\ffi := \ffi^{\rm hom} + \ffi^{\partial}$ is a solution to Problem \eqref{eq:NS_bs}.
The uniqueness easily follows from the norm equivalence \eqref{eq:norm_equivalence}.
\end{proof}



\section*{Acknowledgements}
The authors  were
partially supported by the European Research Council through
the H2020 Consolidator Grant (grant no. 681162) CAVE,
Challenges and Advancements in Virtual Elements.
This support is gratefully acknowledged.

\addcontentsline{toc}{section}{\refname}
\bibliographystyle{plain}
\bibliography{biblio}

\end{document}